\newtheorem {theorem}{Theorem}[section]
\newtheorem {lemma} [theorem] {Lemma}
\newtheorem {proposition} [theorem] {Proposition}
\newtheorem {corollary} [theorem] {Corollary}
\newtheorem* {claim*}{Claim}
\newtheorem* {subclaim*}{Subclaim}
\newtheorem {notation} [theorem] {Notation}
\newtheorem {observation} [theorem] {Observation}
\newtheorem {convention} [theorem] {Convention}
\newtheorem {assumption} [theorem] {Assumption}
\theoremstyle{definition}
\newtheorem{remark}[theorem]{Remark}
\newtheorem {definition} [theorem] {Definition}
\newcommand{\mc}{\mathcal}
\newcommand{\from}{\colon\thinspace}
\newcommand{\st}{\mid}
\newcommand{\Homeo}{\operatorname{Homeo}}
\newcommand{\fix}{\operatorname{fix}}
\newcommand{\Hom}{\operatorname{Hom}}
\newcommand{\dvis}{d_{\operatorname{vis}}}
\newcommand{\dgam}{d_{\Gamma}}
\newcommand{\dtrip}{d_{\Xi}}
\newcommand{\gprod}[3]{({#1}\,|\,{#2})_{#3}}
\newcommand{\bgam}{\partial G}
\newcommand{\diam}{\operatorname{diam}}
\newcommand{\dHaus}{d_{\operatorname{Haus}}}
\newcommand{\idG}{\mathbf{e}}
\newcommand{\identity}{\mathrm{id}}
\newcommand{\PSL}{\mathrm{PSL}}
\newcommand{\C}{\mathbb{C}}
\newcommand{\minsep}{\operatorname{minsep}}
\newcommand{\triples}{\Xi}
\newcommand{\minus}{-}
\newcommand{\bN}{\mathbb{N}}
\newcounter{notes}
\title{Stability for hyperbolic groups acting on boundary spheres}
\author[K. Mann]{Kathryn Mann}
\address{Department of Mathematics, Cornell University, Ithaca, NY 14853, USA
}
\email{k.mann@cornell.edu}
\author[J.F. Manning]{Jason Fox Manning}
\address{Department of Mathematics, Cornell University, Ithaca, NY 14853, USA
}
\email{jfmanning@cornell.edu}
\begin{document}

\begin{abstract}
A hyperbolic group $G$ acts by homeomorphisms on its Gromov boundary.  We show that if $\bgam$ is a topological $n$--sphere the action is {\em topologically stable} in the dynamical sense: any nearby action is semi-conjugate to the standard boundary action.  
\end{abstract}

\maketitle

\section{Introduction}

Calabi--Weil local rigidity \cite{Calabi, Weil} (an important precursor to Mostow rigidity) states that, for $n \geq 3$, the action of the fundamental group of a hyperbolic $n$-manifold by conformal maps on the boundary sphere $S^{n-1}$ is locally rigid: any nearby conformal action is conjugate in $SO^+(n, 1)$ to the original action.    
Inspired by this, we investigate rigidity for the actions on boundary spheres of the broader class of all Gromov hyperbolic groups with sphere boundary.  These boundaries do not typically admit a natural conformal or even a $C^1$ structure, so the relevant notion of local stability is that from topological dynamics.  

Recall that an action $\rho_0 \from G \to \Homeo(X)$ of a group $G$ on a topological space $X$ is a {\em topological factor} of an action $\rho \from G \to \Homeo(Y)$ if there is a surjective, continuous map $h\from Y \to X$ such that $h \circ \rho = \rho _0 \circ h$.  Such a map $h$ is called a {\em semi-conjugacy}.  An action of a group $G$ on a topological space $X$ is {\em topologically stable} or {\em $C^0$ stable} if it is a factor of any sufficiently close action in the compact-open topology on $\Hom(G, \Homeo(X))$.   We prove the following.  

\begin{restatable}[Topological stability]{theorem}{maintheorem}\label{thm:main}
Let $G$ be a hyperbolic group with sphere boundary. Then the action of $G$ on $\bgam$ is topologically stable.  More precisely, given any neighborhood $V$ of the identity in the space of continuous self-maps of $S^n$, there exists a neighborhood $U$ of the standard boundary action in $\Hom(G, \Homeo(S^n))$ such that any representation in $U$ has $\rho_0$ as a factor, with semi-conjugacy contained in $V$.  
\end{restatable}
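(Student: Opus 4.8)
The plan is to construct the semi-conjugacy $h\from S^n\to\bgam$ (from the perturbed action to the standard boundary action) by a shadowing argument, and to use the hypothesis that $\bgam$ is a sphere only at the very end, to promote the resulting continuous equivariant map to a \emph{surjective} one by a degree computation. The geometric input is concentrated in a uniform expansion package for the standard action, extracted from the proper cocompact action of $G$ on the space of distinct triples $\triples$: fixing a basepoint triple and applying the Milnor--\v{S}varc lemma makes the orbit map $G\to\triples$ a quasi-isometry, and this renders quantitative the fact that every $g\in G$ has near north--south dynamics on $(\bgam,\dvis)$ with constants governed by $\dgam$, $\dtrip$ and $\minsep$ rather than by $g$ individually. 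Concretely, I would record the following (standard for boundaries of hyperbolic groups, and the source of their Markov coding): for a visual metric $\dvis$ and a suitable finite symmetric generating set $T$ there are $\lambda>1$ and $\epsilon_0>0$ so that for all $x,y\in\bgam$ with $\dvis(x,y)<\epsilon_0$ some $s\in T$ satisfies $\dvis(\rho_0(s)x,\rho_0(s)y)\ge\min\{\lambda\,\dvis(x,y),\epsilon_0\}$, and (since $T$ is finite) all the maps $\rho_0(s)^{\pm1}$ are uniformly bi-Lipschitz. Equivalently, for every $x$ and every sufficiently small ball $B\ni x$ there is a geodesic word $w$ in $T$, of length comparable to $\log(\epsilon_0/\diam B)$, whose action $\rho_0(w)$ carries $B$ onto a ball of radius comparable to $\epsilon_0$ with uniformly bounded distortion; cocompactness on $\triples$ is what makes this uniform in $x$.

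To build $h$: given the target neighborhood $V$, shrink it so that every map in $V$ sends no point of $S^n$ to its antipode, and choose $\rho$ sufficiently $C^0$-close to $\rho_0$; this defines the neighborhood $U$. For $x\in S^n$, construct inductively a geodesic ray $\idG=g_0,g_1,g_2,\dots$ in the Cayley graph $\Gamma$, together with a nested sequence of balls $B_0\supset B_1\supset\cdots$ about $x$ with $\diam B_k\to0$, so that $\rho(g_k)$ carries a ball of radius comparable to $\epsilon_0$ onto $B_k$: at stage $k$ apply the expanding word from the first step at the point $\rho(g_k)^{-1}x$, append it (suitably oriented) to $g_k$, and shrink $B_k$ accordingly, choosing among the finitely many admissible words so as to keep the resulting group words quasigeodesic with uniform constants. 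Because $\rho$ is $C^0$-close to $\rho_0$, using $\rho(s)$ in place of $\rho_0(s)$ perturbs each stage by only $O(\delta)$, while the genuine contraction of the standard inverse branch damps accumulated error geometrically; telescoping the estimates bounds $\diam B_k$ and the drift of the centers, so $\bigcap_kB_k$ is a single point, which we take to be $h(x)$. The bounded ambiguity in the choices keeps any two valid itineraries for $x$ within bounded Hausdorff distance, so $h(x)$ is independent of them; and when run with $\rho_0$ the construction is ``visual'' ($g_k$ fellow-travels the geodesic ray $[\idG,x)$ in $\Gamma$), so $h=\identity$ if $\rho=\rho_0$.

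It remains to check that $h$ is a semi-conjugacy lying in $V$. Equivariance $h\circ\rho(g)=\rho_0(g)\circ h$ is immediate, since prepending $g$ to the data for $x$ is a valid itinerary for $\rho(g)x$ whose ray is $g$ times that of $x$. Both the continuity of $h$ and the estimate $h\in V$ follow from the same observation: when two pieces of input data are close---either two nearby points $x,x'$ for a fixed $\rho$, or the pair $(x,\rho)$ against $(x,\rho_0)$---their itineraries agree on an initial segment whose length grows without bound as the data converge (for $(x,\rho)$ versus $(x,\rho_0)$ it has length comparable to $\log(1/\delta)$), which forces the images to be $\dvis$-close; since the $\rho_0$-itinerary of $x$ converges to $x$, this yields $\dvis(h(x),x)\to0$ uniformly in $x$ as $\delta\to0$, so $h\in V$. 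Finally, because $h\in V$ we may slide each $h(x)$ to $x$ along the unique short spherical geodesic between them, obtaining a homotopy $h\simeq\identity_{S^n}$; hence $\deg h=1$, and a self-map of $S^n$ of nonzero degree is surjective. Therefore $h$ is a semi-conjugacy and $\rho_0$ is a topological factor of $\rho$. This last step is the only place the hypothesis $\bgam\cong S^n$ enters.

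I expect the construction of the second paragraph to be the main obstacle: producing, from a merely $C^0$-small perturbation, a coherent itinerary whose associated group word is a genuine quasigeodesic with constants uniform in $x$ and in $\rho$, and whose nested balls shrink to a well-defined point. The delicate issue is the tension between a \emph{fixed} perturbation size $\delta$ and the need to iterate the expansion infinitely often to land in $\bgam$: one must run a self-improving estimate guaranteeing that at every scale the contraction of the standard inverse branches dominates the per-stage perturbation error, so that the total accumulated error stays $O(\delta)$ while the localization of $h(x)$ sharpens at a geometric rate. By comparison the first step merely repackages known facts about hyperbolic boundaries, and the third combines equivariant bookkeeping with an elementary degree argument.
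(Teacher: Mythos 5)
Your proposal is the expansion--coding argument that goes back to Sullivan and was recently generalized by Kapovich--Kim--Lee, and this is exactly the approach the paper explicitly identifies as \emph{not} applying to $C^0$ perturbations. The reason is concrete: expansivity is preserved under $C^1$ or Lipschitz-small perturbations but is wiped out at small scales by a merely $C^0$-small perturbation, so the iteration that drives the coding has no reason to converge. The ``self-improving estimate'' you hope for does not exist: if $\rho_0$'s inverse branches contract by a factor $\lambda^{-1}<1$ and $\rho$ differs from $\rho_0$ by $\delta$ in the $C^0$ sense, then the post-perturbation contraction of a set of diameter $r$ is roughly $\lambda^{-1}r + \delta$, whose iterates stabilize at scale $\sim \delta/(1-\lambda^{-1})$ rather than tending to zero. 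There is no further damping available at small scales because $\rho$ is not assumed expanding anywhere.

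The difficulty shows up already in your definition of $h$. You set $B_k = \rho(g_k)(B'_k)$ with $B'_k$ an $\epsilon_0$-ball about $y_k = \rho(g_k)^{-1}x$. Since $\rho(g_k)(y_k) = x$, one has $x \in B_k$ for \emph{every} $k$, hence $x \in \bigcap_k B_k$. If the diameters really did tend to $0$ this would force $\bigcap_k B_k = \{x\}$ and $h = \identity$, which is not a semi-conjugacy unless $\rho = \rho_0$; if they do not tend to $0$, then $h(x)$ is not a well-defined point. Reinterpreting $h(x)$ as the endpoint in $\partial\Gamma$ of the ray $\{g_k\}$ evades this particular contradiction, but then the two central claims---that the ray built from the $\rho$-itinerary is a quasi-geodesic with uniform constants, and that its endpoint is independent of the bounded ambiguity in the moves---are exactly where the work lies, and your proposal offers no argument for either. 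In the paper these are genuinely hard points: well-definedness is Lemma~\ref{lem:only_theta} and Corollary~\ref{cor:only_theta} (a contradiction argument using the conical limit point property), and quasi-geodesicity is not proved step by step at all. Instead the paper works globally: it builds the equivariant near-flat bundle map $f^\rho$ (Proposition~\ref{prop:good_map}), forms the intersection $L_a \cap Y_\theta$, shows via the coarse projection that it is locally geodesic-like (Lemma~\ref{lem:localquasi}), and then upgrades local to global with Lemma~\ref{lem:bi-infinite}, entirely avoiding the accumulation of error that dooms the iterative coding. That is the ``entirely new method'' the introduction advertises, and it is what the $C^0$ hypothesis actually demands.

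Two smaller remarks. Your final degree-theoretic surjectivity step does match the paper (``This neighborhood contains only degree one maps, so $h$ is surjective''). But your assertion that this is the \emph{only} place $\bgam \cong S^n$ is used is not accurate: the manifold structure on $\bgam$ enters in the Lefschetz fixed-point argument of Lemma~\ref{lem:fixed} and Lemma~\ref{lem:cont}, in the local contractibility of $\Homeo(\bgam)$ needed for Proposition~\ref{prop:good_map}, and the hypothesis $n\ge 2$ is used in the path construction of Lemma~\ref{lem:paths}.
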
 

In parallel with Calabi--Weil rigidity, this says that these boundary actions exhibit the strongest possible form of local rigidity.  While there is overlap in the groups considered (fundamental groups of closed hyperbolic manifolds are Gromov hyperbolic), our result is neither a special case nor a generalization of the classical case. We consider a much broader space of deformations -- actions by homeomorphisms rather than conformal maps -- but semi-conjugacy is of course weaker than conformal conjugacy, which one cannot hope for when considering general continuous deformations (see in particular the examples of \cite[Section 4]{BowdenMann}).

\subsection*{History and related results} ``Stability from hyperbolicity" is an important and recurring theme in dynamical systems.  For instance, hyperbolic (Anosov) diffeomorphisms are topologically stable, thanks to the well known shadowing lemma.
However, in much of the existing literature, hyperbolicity is described using some smooth or at least $C^1$ structure, while the actions we consider are typically not differentiable, only having H\"older regularity. 

Regarding boundary actions of groups, Sullivan's 1985 {\em Structural stability implies hyperbolicity for Klein\-ian groups} \cite{Sullivan},  characterizes convex-cocompact subgroups of $\PSL(2,\C)$ as those subgroups whose action on their limit set is stable under $C^1$ perturbations.  Sullivan uses the fact that group elements expand neighborhoods of points to produce a coding of orbits that is insensitive to perturbation.   This technique was recently generalized by Kapovich--Kim--Lee \cite{KKL} to a much broader setting, including Lipschitz perturbations of many group actions on metric spaces which satisfy a generalized version of Sullivan's expansivity condition.  

Matsumoto \cite{Matsumoto} gives a more robust form of rigidity for the actions of fundamental groups of compact surfaces on their boundary at infinity.  In this case the boundary is a topological circle, and Matsumoto's work implies that any deformation of such a boundary action is semi-conjugate to the original action.   Motivated by this, Bowden and the first author studied the actions of the fundamental groups of compact Riemannian manifolds on their boundaries at infinity, showing these satisfy a form of local rigidity.  Again hyperbolicity played a role, this time in the form of the Anosov property of geodesic flow on such negatively curved manifolds.   
  
Theorem \ref{thm:main} generalizes aspects of both Sullivan's and Matsumoto's program.  While hyperbolic groups acting on their boundaries are among the examples studied by Kapovich--Kim--Lee, their methods only apply to pertubations which continue to have Sullivan's expansivity, for instance Lipschitz-close actions.   General $C^0$ perturbations need not be Lipschitz close, so Sullivan's coding no longer applies, and we need an entirely new method of proof.  Our strategy is more in the spirit of \cite{BowdenMann}, but uses large-scale geometry in place of the Riemannian manifold structure and Anosov geodesic flow.  

Our focus on spheres is motivated in part by the fact that these are the most homogeneous group boundaries.  At the other end of the spectrum, Kapovich and Kleiner \cite{KapovichKleiner00} constructed hyperbolic groups that are {\em boundary rigid} in the sense that any homeomorphism of the boundary comes from the action of an element of the group.  These groups trivially satisfy local rigidity since $\Homeo(\partial G) \cong G$ is discrete.
By contrast, homeomorphisms of the sphere are very easy to perturb, each having an infinite dimensional family of deformations.   
The reader may consult \cite{CPbook} or \cite{BKsurvey} for more background on the dynamics of hyperbolic groups acting on their boundaries.

\subsection*{Scope}
Bartels, L\"uck and Weinberger \cite[Ex.5.2]{BLW10} give, for all $k\geq2$, examples of torsion-free hyperbolic groups $G$ with $\partial G = S^{4k-1}$ that are not the fundamental group of any smooth, closed, aspherical manifold (note that such an example with $\partial G = S^2$ would give a counterexample to the Cannon Conjecture).  These examples show that, even in the torsion free case, Theorem \ref{thm:main} is a strict generalization of the work of \cite{BowdenMann} on Riemannian manifold fundamental groups.  Of course, groups with torsion provide numerous other examples, and the tools introduced within the large scale geometric framework of the proof should be of independent interest.

\subsection*{Outline} 
The broad strategy of the proof is to translate the data of a $G$--action on $S^n$ into a $G$--action on a sphere bundle over a particular space quasi-isometric to $G$, then show that nearby actions can be related by a $G$-equivariant map between their respective bundles that is close to the identity on large compact sets.   This lets us promote metrically stable notions in coarse negative curvature (such as the property of a subset being bounded Hausdorff distance from a geodesic) into stability for the group action.  

In Section \ref{sec:background}, we collect general results and preliminary lemmas on hyperbolic metric spaces.  In Section \ref{sec:fdef} we construct the bundles and equivariant map advertised above, in the broader context of (not necessarily hyperbolic) groups acting on manifolds, which is the natural setting for this technique.   

To apply this technique to the proof of Theorem \ref{thm:main}, we need to find a suitably nice space $X$ with a proper, free, cocompact action of $G$ by isometries.   If $G$ is torsion free, the space of distinct triples in $\bgam$ is a natural choice, but if $G$ has torsion the action of $G$ on triples may not be free.   We remedy this in sections \ref{sec:reductions} and \ref{sec:Xkappa}, first reducing to the case where $G$ acts faithfully on $\partial G$, and then showing that one may remove a small neighborhood of the space of fixed boundary triples without losing too much geometry, giving a suitable space to use in the rest of the proof.  We also prove several technical lemmas on the triple space for reference in later sections.  

Section \ref{sec:quasigeodesic} sets up the main proof and specifies a neighborhood of the boundary action where Theorem \ref{thm:main} holds.  The bundles from Section \ref{sec:fdef} come with natural topological foliations, 
and Section \ref{ss:geodesicintersection} shows that the image of one of these foliations in the source space (whose leaves are parameterized by points of $\bgam$) intersects leaves in the target along coarsely geodesic sets.    Section \ref{sec:endpoint} shows that the endpoints of these coarse geodesics depend only on the original leaf, thus giving a map $h$ from the leaf space of one foliation to the Gromov boundary of $X$.  Both the leaf space and the Gromov boundary of $X$ are canonically homeomorphic to $\bgam$, and the map $h$ is our semi-conjugacy.

\subsection*{Acknowledgements}
K.M. was partially supported by NSF grant DMS 1844516 and a Sloan fellowship.
J.M. was partially supported by Simons Collaboration Grant \#524176.

\section{Background} \label{sec:background}
We set notation, collect some general results on hyperbolic metric spaces and prove some preliminary lemmas needed for the main theorem.    

\subsection{Setup}
We fix the following notation.  $G$ denotes a non-elementary hyperbolic group; in this section we do not require $\bgam$ to be a sphere.  We fix a generating set $\mathcal{S}$, which gives us a Cayley graph $\Gamma$ and metric $\dgam$ on $\Gamma$.  Vertices of $\Gamma$ are identified with group elements.  In particular the identity $\idG$ is a vertex of $\Gamma$.
The metric $\dgam$ is $\nu$--hyperbolic (in the sense that geodesic triangles are $\nu$--thin) for some $\nu>0$.  We will fix a constant $\delta \ge \nu$ with some other convenient properties later.  The Gromov boundary of the group is denoted $\bgam$, this is of course equal to the Gromov boundary of $\Gamma$.

We write $\gprod{x}{y}{z}$ for the Gromov product of $x$ and $y$ at $z$.   
The point $z$ must lie in $\Gamma$, but $x,y$ may be in $\Gamma\cup\bgam$, using the standard definition of the Gromov product at infinity (see eg \cite[III.H.3.15]{BH99}), as follows: 
  \[ \gprod{x}{y}{p} = \sup\left\{\left. \liminf_{i,j\to \infty} \gprod{x_i}{y_j}{p}\ \right|\ \lim_{i\to\infty} x_i = x, \lim_{i\to\infty}y_i =y\right\}.\] 

We also fix a \emph{visual metric} $\dvis$ on $\bgam$.  This means a metric so that there are constants $\lambda>1$ and $k_2>k_1>0$ satisfying, for all $a,b\in \bgam$,
\begin{equation}
  \label{eq:bilip}
  k_1 \lambda^{-\gprod{a}{b}{\idG}}\le \dvis(a,b)\le k_2\lambda^{-\gprod{a}{b}{\idG}}.
\end{equation}
(See \cite[III.H.3]{BH99} or \cite[7.3]{GhH90} for more details, including the existence of such a metric.)
Unless otherwise specified, all metric notions  in $\bgam$  (such as balls $B_r(p)$) will be defined using this visual metric.
Occasionally, when specializing to $\bgam = S^n$ we also make use of the standard round metric on $S^n$.

We will need to use the following lemma for estimating the Gromov product of points at infinity in a $\nu$-hyperbolic space.   
\begin{lemma}\label{lem:gprodest}
  Let $M$ be $\nu$--hyperbolic, and let $p\in M$.  Let $\alpha,\beta\in \partial M$ be represented by geodesic rays $\gamma_\alpha,\gamma_\beta$ starting at $p$.  For any points $a\in \gamma_\alpha, b\in\gamma_\beta$,
  \[ \gprod{\alpha}{\beta}{p}\ge \gprod{a}{b}{p} - 2\nu.\]
\end{lemma}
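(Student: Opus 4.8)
The plan is to apply the definition of the Gromov product at infinity directly, using the tails of the two rays as the approximating sequences. Write $t_a = d(p,a)$ and $t_b = d(p,b)$, so that $a = \gamma_\alpha(t_a)$ and $b = \gamma_\beta(t_b)$, and for parameters $s\ge t_a$ and $s'\ge t_b$ set $a_s = \gamma_\alpha(s)$ and $b_{s'} = \gamma_\beta(s')$. Since $a_s\to\alpha$ and $b_{s'}\to\beta$ as $s,s'\to\infty$, the definition of $\gprod{\alpha}{\beta}{p}$ as a supremum over approximating sequences gives
\[
\gprod{\alpha}{\beta}{p}\;\ge\;\liminf_{s,s'\to\infty}\gprod{a_s}{b_{s'}}{p}.
\]
Hence it will be enough to show that $\gprod{a_s}{b_{s'}}{p}\ge \gprod{a}{b}{p}-2\nu$ uniformly for all $s\ge t_a$ and $s'\ge t_b$, so that this bound survives passage to the limit.

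The main point is a monotonicity statement that requires no hyperbolicity: if $x$ lies on a geodesic segment from $p$ to $x'$ and $y$ is arbitrary, then $\gprod{x'}{y}{p}\ge \gprod{x}{y}{p}$. Indeed $d(p,x') = d(p,x)+d(x,x')$ because $x\in[p,x']$, and $d(x',y)\le d(x',x)+d(x,y)$ by the triangle inequality, so
\[
\gprod{x'}{y}{p} = \tfrac12\bigl(d(p,x')+d(p,y)-d(x',y)\bigr)\ge \tfrac12\bigl(d(p,x)+d(p,y)-d(x,y)\bigr) = \gprod{x}{y}{p}.
\]
Now $a$ lies on the geodesic $\gamma_\alpha|_{[0,s]}$ from $p$ to $a_s$ and $b$ lies on $\gamma_\beta|_{[0,s']}$ from $p$ to $b_{s'}$, so applying this twice yields $\gprod{a_s}{b_{s'}}{p}\ge \gprod{a}{b_{s'}}{p}\ge \gprod{a}{b}{p}$ for all admissible $s,s'$. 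Combined with the first display this proves the lemma, in fact with $2\nu$ to spare.

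If one prefers to stay within the thin-triangles formalism, the same two reductions go through with the slightly weaker estimate $\gprod{a_s}{b}{p}\ge \gprod{a}{b}{p}-\nu$, valid whenever $a$ lies on the geodesic from $p$ to $a_s$: this follows from $\nu$-thinness of the geodesic triangle on $p,a_s,b$ (which puts $a$ within $\nu$ of $[p,b]\cup[b,a_s]$) together with the elementary bound $\gprod{a}{b}{p}\le\min\{d(p,a),d(p,b)\}$, and applying it once in each variable costs exactly $2\nu$, matching the statement. In either case there is no serious obstacle; the only thing requiring care is the definition of the Gromov product at infinity and the $\liminf$ — one must make sure the lower bound on $\gprod{a_s}{b_{s'}}{p}$ is genuinely uniform in $s$ and $s'$ before passing to the limit.
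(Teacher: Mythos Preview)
Your argument is correct, and in fact sharper and more elementary than the paper's. Both proofs begin the same way, bounding $\gprod{\alpha}{\beta}{p}$ from below by $\liminf_{s,s'}\gprod{\gamma_\alpha(s)}{\gamma_\beta(s')}{p}$ via the definition of the Gromov product at infinity, and then seek a uniform lower bound on $\gprod{c}{d}{p}$ for $c=\gamma_\alpha(s)$, $d=\gamma_\beta(s')$ beyond $a,b$. The difference is in how that lower bound is obtained. The paper invokes the hyperbolic four-point inequality $\gprod{x}{y}{p}\ge\min\{\gprod{x}{z}{p},\gprod{y}{z}{p}\}-\nu$ twice (once to pass from $\gprod{c}{d}{p}$ to $\gprod{c}{b}{p}$, once more to reach $\gprod{a}{b}{p}$), each application costing a $\nu$, which explains the $2\nu$ in the statement. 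You instead observe the purely metric monotonicity $\gprod{x'}{y}{p}\ge\gprod{x}{y}{p}$ whenever $x\in[p,x']$, which needs only the triangle inequality, and apply it once in each variable to get $\gprod{c}{d}{p}\ge\gprod{a}{b}{p}$ with no loss at all. So your route shows the lemma holds with $0$ in place of $2\nu$ and never uses hyperbolicity; the paper's route is the standard ``Gromov product triple'' manipulation and accounts for the constant actually stated.
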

\begin{proof}
From the definition of Gromov product at infinity, it follows that
  \[ \gprod{\alpha}{\beta}{p} \ge \liminf_{s,t\to\infty} \gprod{\gamma_\alpha(s)}{\gamma_\beta(t)}{p}.\]
To simplify notation, consider $p$ as a basepoint and write $|\cdot|$ for $d_M(p,\cdot)$.
  Suppose that $c$ is a point on $\gamma_\alpha$ and $d$ a point on $\gamma_\beta$ so that $|c|>|a|$ and $|d|>|b|$.
  We want to show that $\gprod{c}{d}{p} \ge \gprod{a}{b}{p} - 2\nu$.

  Consider the triple of Gromov products
   $\gprod{c}{d}{p}$,  $\gprod{b}{d}{p} = |b|$, and  $\gprod{c}{b}{p}\le |b|$.
  As is well known it follows from $\nu$--hyperbolicity that any one of such a triple is bounded below by $\nu$ less than the minimum of the other two, so we have
  \begin{equation}\label{eq:gprod1} \gprod{c}{d}{p} \ge \gprod{c}{b}{p} - \nu.\end{equation}
  Considering next the triple
  $\gprod{c}{b}{p}$, $\gprod{c}{a}{p} = |a|$, and  $\gprod{a}{b}{p} \le |a|$,
  we conclude
  \begin{equation}\label{eq:gprod2} \gprod{c}{b}{p} \ge \gprod{a}{b}{p} - \nu.\end{equation}
  Together \eqref{eq:gprod1} and \eqref{eq:gprod2} give $\gprod{c}{d}{p} \ge \gprod{a}{b}{p} - 2\nu$, as desired.
\end{proof}

\subsection{The space of triples} 
Write $\triples$ for the space of ordered distinct triples of points in the Gromov boundary $\partial \Gamma = \bgam$.  
We use the following well known property.  

\begin{proposition}[Convergence group property (see \cite{Bowditch98, Tukia98} and \cite{GromovEssay} 8.2.M)] \label{prop:convergence_group}
$G$ acts properly discontinuously and cocompactly on $\triples$, and each point $a \in \bgam$ is a conical limit point, meaning that there exists $\{g_i\}_{i\in\bN}\subset G$ and $p \neq q \in \bgam $ such that $g_i(a) \to p$ and $g_i(z) \to q$ for all $z \in \bgam - \{a\}$.   
\end{proposition}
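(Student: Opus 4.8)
The plan is to prove the two assertions separately: the properly discontinuous, cocompact action on $\triples$ via a coarsely defined \emph{barycenter} map $\beta\from\triples\to\Gamma$, and the conical limit point statement via the isometry--invariance of the Gromov product along a geodesic ray to $a$. Since $\Gamma$ is a proper geodesic $\nu$--hyperbolic space, every distinct triple $\theta=(a,b,c)$ bounds an ideal geodesic triangle, and the uniform thinness of ideal triangles in $\Gamma$ provides a point $\beta(\theta)\in\Gamma$ within a universal distance $C_0=C_0(\nu)$ of all three sides; equivalently, $\gprod{a}{b}{\beta(\theta)}$, $\gprod{b}{c}{\beta(\theta)}$, $\gprod{c}{a}{\beta(\theta)}$ are all at most some $C_1=C_1(\nu)$, and any two points with this last property lie within $C_2(\nu)$ of each other. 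Because $G$ acts on $\Gamma$ by isometries sending geodesics to geodesics, $\beta$ is $G$--equivariant up to additive error $C_2$; and after replacing an arbitrary triple by a $G$--translate we may fix $\theta_0$ with $\dgam(\beta(\theta_0),\idG)$ bounded by a constant depending only on $\nu$, so that $\dgam(\beta(g\theta_0),g)$ is uniformly bounded over $g\in G$ and $\beta$ is coarsely onto $\Gamma$.

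For cocompactness and proper discontinuity, set $\widehat K_\varepsilon=\{(a,b,c)\in\bgam^3\st\dvis(a,b),\dvis(b,c),\dvis(c,a)\ge\varepsilon\}$: this is closed in the compact space $\bgam^3$ and avoids the fat diagonal, hence is a compact subset of $\triples$, and conversely any compact subset of $\triples$ lies in some $\widehat K_\varepsilon$. By the visual metric estimate \eqref{eq:bilip}, the conditions ``$\gprod{\cdot}{\cdot}{\idG}$ bounded by $C$'' and ``$\dvis(\cdot,\cdot)$ bounded below'' are equivalent up to adjusting constants, so ``$\dgam(\beta(\theta),\idG)$ bounded'' is equivalent to ``$\theta\in\widehat K_{\varepsilon_0}$'' for a universal $\varepsilon_0$. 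Given any $\theta\in\triples$, choosing $g\in G$ within distance $1$ of $\beta(\theta)$ makes $\dgam(\beta(g^{-1}\theta),\idG)$ bounded by a universal constant, so $g^{-1}\theta\in\widehat K_{\varepsilon_0}$ and $\theta\in G\widehat K_{\varepsilon_0}$; thus $G\widehat K_{\varepsilon_0}=\triples$. If $K\subseteq\triples$ is compact, then $K\subseteq\widehat K_\varepsilon$ for some $\varepsilon$, so $\dgam(\beta(\theta),\idG)\le R$ for all $\theta\in K$ and some $R=R(\varepsilon)$; hence $gK\cap K\ne\emptyset$ forces $\dgam(\idG,g\idG)\le 2R+C_2$, and only finitely many $g\in G$ satisfy this since $\Gamma$ is locally finite.

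For the conical limit points, fix $a\in\bgam$ and a geodesic ray $r\from[0,\infty)\to\Gamma$ from $\idG$ to $a$, and for $n\in\bN$ let $g_n\in G$ be within distance $1$ of $r(n)$. Isometry--invariance of the Gromov product gives $\gprod{g_n^{-1}x}{g_n^{-1}y}{\idG}=\gprod{x}{y}{g_n}$ for all $x,y\in\bgam$. If $x\ne a\ne y$, the bi-infinite geodesic $[x,y]$ has both ideal endpoints distinct from $a$ while $r(n)\to a$, so $\dgam(r(n),[x,y])\to\infty$ (a subsequence staying in a bounded neighborhood of $[x,y]$ would force a sequence on $[x,y]$ to converge to $a$, which is impossible since $a\notin\{x,y\}$); since $\gprod{x}{y}{g_n}$ equals $\dgam(r(n),[x,y])$ up to a bounded error, $\gprod{x}{y}{g_n}\to\infty$, hence $\dvis(g_n^{-1}x,g_n^{-1}y)\to0$ by \eqref{eq:bilip}. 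Fixing $b_0\ne a$, the ray $r$ and the $a$--directed end of $[a,b_0]$ are geodesic rays to the same ideal point $a$, hence eventually fellow-travel within a constant depending only on $\nu$, so $\gprod{a}{b_0}{g_n}$ stays bounded and $\dvis(g_n^{-1}a,g_n^{-1}b_0)$ stays bounded below. Passing to a subsequence along which $g_n^{-1}b_0\to q$ and $g_n^{-1}a\to p$, we obtain $p\ne q$, $g_n^{-1}a\to p$, and $g_n^{-1}z\to q$ for every $z\ne a$ (since $\dvis(g_n^{-1}z,g_n^{-1}b_0)\to0$); this is the asserted conical convergence, with witnessing sequence a subsequence of $\{g_n^{-1}\}$.

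The statement is classical (see \cite{Bowditch98,Tukia98}), and no single step is hard; the part demanding the most care is the barycenter map, which is only coarsely well defined and discontinuous, so one must argue throughout with the universal $\nu$--hyperbolicity constants and then bridge between compactness in $\triples$ (a statement about the topology of $\bgam$) and boundedness in $\Gamma$ via \eqref{eq:bilip}. In the conical-point argument, the facts about geodesic rays sharing an ideal endpoint, together with the attendant estimates on Gromov products at infinity, are the analytic core (compare Lemma \ref{lem:gprodest}).
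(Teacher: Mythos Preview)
The paper does not give its own proof of this proposition: it is stated with citations to \cite{Bowditch98,Tukia98,GromovEssay} and used as a black box.  So there is no argument to compare yours against.

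Your proof is correct and is essentially the standard one.  The coarse barycenter $\beta$ plays the same role as the paper's coarse projection $\pi_r$ (Definition~\ref{def:projection}): both take a triple to a bounded set near the ``center'' of its ideal triangle, and your Lemma-style claim that any two points with small pairwise Gromov products are close is exactly what underlies the paper's Lemma~\ref{lem:diambound}.  One small imprecision: when you write that any two points with all three products bounded lie within $C_2(\nu)$ of each other, the constant must depend on the bound as well as on $\nu$ (you need this in the proper-discontinuity step, where $\idG$ has products bounded by some $C(\varepsilon)$ rather than $C_1$).  The conical-limit-point argument via a geodesic ray to $a$, group elements tracking the ray, and the dichotomy ``$\gprod{x}{y}{g_n}\to\infty$ for $x,y\ne a$ versus $\gprod{a}{b_0}{g_n}$ bounded'' is the textbook route and is carried out cleanly.
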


The following definition can be thought of as giving a coarse projection map from $\triples$ to $\Gamma$.  (Compare \cite[8.2.K]{GromovEssay}.)  When $S$ is a subset of a metric space, we use the notation $N_r(S)$ to indicate the open $r$--neighborhood of $S$.

\begin{definition}[Coarse projection]\label{def:projection}
  For each $r>0$ we define a projection map $\pi_r$ from $\triples$ to subgraphs of $\Gamma$ as follows.  For $(a,b,c)\in \triples$ let $\mathcal{G}(a,b,c)$ be the set of geodesics in $\Gamma$ with endpoints in $\{a,b,c\}$. 
For each $r>0$ define $\pi_r(a,b,c) \subset \Gamma$ to be the smallest subgraph of $\Gamma$ containing $\bigcap_{\gamma  \in \mathcal{G}(a,b,c)} N_{r-1}(\gamma).$

If $Z$ is a subset of $\triples$, we define $\pi_r(Z) = \bigcup_{z \in Z} \pi_r(z)$.  For $s\in \Gamma$, we define $\pi_r^{-1}(s) = \{(a,b,c)\st s\in \pi(a,b,c)\}$; for $S\subset \Gamma$, define $\pi_r^{-1}(S) = \bigcup_{s\in S} \pi_r^{-1}(s)$.
\end{definition}

\begin{remark} \label{rem:projection}
  If $r$ is sufficiently large (depending on the hyperbolicity constant of $\Gamma$), then $\pi_r(a,b,c)$ is always nonempty.  
Moreover, for any $x\in \pi_r(a,b,c)$ and any geodesic $\gamma$ with endpoints in $\{a,b,c\}$, we have $\dgam(x,\gamma)\le r$.  We will make frequent use of this estimate. 
\end{remark}

\begin{lemma}
  \label{lem:diambound}
  For every $r\ge 0$, there is a $Q(r) \ge 0$ so $\diam(\pi_r(a,b,c))\le Q(r)$ for all $(a,b,c)\in \triples$.  
\end{lemma}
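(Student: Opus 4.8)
\emph{Strategy.}
I would fix, once and for all, geodesics $\gamma_{ab},\gamma_{bc},\gamma_{ca}$ in $\Gamma$ with endpoints the two indicated points of $\{a,b,c\}$; together they span an ideal triangle. The plan is to show that $\pi_r(a,b,c)$ lies within distance $3r+D$ of a single point $\hat w$ of this triangle, where $D=D(\delta)$ depends only on the hyperbolicity constant. This gives $\diam(\pi_r(a,b,c))\le 6r+2D=:Q(r)$, and the bound is uniform over $\triples$ since $D$ is independent of $(a,b,c)$. The only property of $\pi_r(a,b,c)$ I use is the estimate in Remark~\ref{rem:projection}: every $x\in\pi_r(a,b,c)$ satisfies $\dgam(x,\gamma)\le r$ for each of these three geodesics $\gamma$; in particular the statement is vacuous when $\pi_r(a,b,c)=\emptyset$.

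\emph{The geometric input.}
Two standard facts about $\delta$--hyperbolic spaces are needed (see e.g.\ \cite{BH99}). (a) The ideal triangle $\gamma_{ab}\cup\gamma_{bc}\cup\gamma_{ca}$ has a \emph{center}: a point $w$ with $\dgam(w,\gamma)\le D_1(\delta)$ for each of its three sides. (b) If $\rho_1,\rho_2$ are geodesic rays issuing from a common basepoint $o$ with $\rho_1(\infty)\ne\rho_2(\infty)$, then $\dgam(\rho_1(t),\rho_2)\ge t-2\gprod{\rho_1(\infty)}{\rho_2(\infty)}{o}-O(\delta)$ for all $t$. To combine them, fix $\hat w\in\gamma_{ab}$ with $\dgam(\hat w,w)\le D_1$, so that $\dgam(\hat w,\gamma_{bc})\le 2D_1$ and $\dgam(\hat w,\gamma_{ca})\le 2D_1$; using $\gprod{\xi}{\eta}{p}=\dgam(p,\sigma)+O(\delta)$ for any geodesic $\sigma$ joining $\xi$ to $\eta$, we get $\gprod{a}{b}{\hat w}=O(\delta)$ (as $\hat w\in\gamma_{ab}$) and $\gprod{a}{c}{\hat w}=O(D_1+\delta)$. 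Applying (b) to the subray of $\gamma_{ab}$ running from $\hat w$ toward $a$, compared against each of the two halves of $\gamma_{bc}$ (the half ending at $b$ and the half ending at $c$, both based within $O(D_1)$ of $\hat w$), we conclude that for any $p$ on that subray, $\dgam(p,\gamma_{bc})\ge\dgam(p,\hat w)-D$ with $D=D(\delta)$ universal. Symmetrically, for any $p$ on the subray of $\gamma_{ab}$ from $\hat w$ toward $b$, one has $\dgam(p,\gamma_{ca})\ge\dgam(p,\hat w)-D$.

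\emph{Conclusion, and where the work is.}
Now let $x\in\pi_r(a,b,c)$ and pick $x'\in\gamma_{ab}$ with $\dgam(x,x')\le r$; by the triangle inequality $\dgam(x',\gamma_{bc})\le 2r$ and $\dgam(x',\gamma_{ca})\le 2r$. The point $x'$ lies on one of the two subrays of $\gamma_{ab}$ at $\hat w$. If it is on the subray toward $a$, then $\dgam(x',\hat w)\le\dgam(x',\gamma_{bc})+D\le 2r+D$; if it is on the subray toward $b$, then $\dgam(x',\hat w)\le\dgam(x',\gamma_{ca})+D\le 2r+D$. Either way $\dgam(x,\hat w)\le 3r+D$, which is what we wanted. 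The only genuinely non-formal step is the divergence estimate (b), whose proof is a short estimate with Gromov products; everything downstream is the triangle inequality. (A variant that avoids explicitly invoking a center: the function $u\mapsto\dgam(\gamma_{ab}(u),\gamma_{bc})$ is quasi-convex along $\gamma_{ab}$, so its $2r$--sublevel set is a coarse ray toward $b$; the corresponding set for $\gamma_{ca}$ is a coarse ray toward $a$; being oriented oppositely and both proper, they overlap in a set of diameter $O(r+\delta)$, and this set contains all of the projected points $x'$. This variant still rests on the same divergence phenomenon.)
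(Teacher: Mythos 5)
Your proposal is correct, and it reaches the same geometric conclusion as the paper by a somewhat different technical route. The paper's proof first approximates the ideal triangle by a genuine finite geodesic triangle with vertices $a',b',c'$ chosen on the sides so that $\pi_r(a,b,c)$ is forced into $S_r = N_{r+\nu}([a',b'])\cap N_{r+\nu}([b',c'])\cap N_{r+\nu}([c',a'])$, and then uses the comparison tripod (thinness) to show $S_r$ is within $3r+5\nu$ of the tripod's center preimage $Z$. You instead work directly with the ideal triangle: you locate a center $\hat w$ on $\gamma_{ab}$ by $\delta$--slimness and then invoke a divergence estimate — moving along $\gamma_{ab}$ away from $\hat w$ toward $a$ (resp.\ $b$) forces linear separation from $\gamma_{bc}$ (resp.\ $\gamma_{ca}$) — to conclude any $x'\in\gamma_{ab}$ that is $2r$--close to both opposite sides must lie within $2r+D$ of $\hat w$. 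Both arguments hinge on the same phenomenon (the uniformly bounded "thin part" of a hyperbolic triangle); yours avoids introducing the finite approximating triangle at the cost of needing the ray-divergence lemma, which itself is a short Gromov-product computation. Either way one gets $Q(r)$ linear in $r$ with a universal additive constant, and uniformity over $\triples$ is automatic since nothing depends on $(a,b,c)$. One small remark: your stated divergence inequality with the factor $2\gprod{\cdot}{\cdot}{o}$ is weaker than the sharp form (which has a single such factor), but since you only need a lower bound of the form $t-O(\delta+D_1)$ this does not affect the argument. You also correctly handle the possibility $\pi_r(a,b,c)=\emptyset$ and correctly use only the estimate from Remark~\ref{rem:projection}, which is all the argument requires.
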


\begin{proof}
  Recall $\Gamma$ is $\nu$--hyperbolic, meaning that triangles are $\nu$--thin.
  Let $(a,b,c) \in X$ be given.  
Fix bi-infinite geodesics $[a,b], [b,c]$ and $[a,c]$ in $\Gamma$.  Approximate this ideal triangle by a triangle in $\Gamma$ by choosing points $a'$ $b'$ and $c' \in \Gamma$ on the geodesics $[a,b], [b,c]$ and $[a,c]$, respectively, satisfying $\dgam(a', [a, c]) < \nu$ and $\dgam(a', [b, c]) > r + 2 \nu$, and such that the same two inequalities also hold when the letters $a, b, c$ are cyclically permuted.  Then $\pi_r(a,b,c)$ is a subset of 
\[ S_r :=  N_{r + \nu}([a',b']) \cap N_{r + \nu}([b',c']) \cap N_{r + \nu}([c',a']) \]
so it suffices to show this set has diameter bounded by $Q(r)$, for some suitable function $Q$.  

Let $p$ denote the map from the triangle with sides $[a',b'], [b', c']$ and $[a', c']$ to a tripod witnessing that the triangle is $\nu$--thin, and let $Z$ be the preimage of the center, this is a set of three points with diameter at most $\nu$.  We now claim that $S_r$ lies in the $3r + 5 \nu$ neighborhood of $Z$, which is enough to prove the lemma.    To prove the claim, suppose $s \in S_r$, so there exist points $x_1, x_2$ and $x_3$ on $[a',b'], [b', c']$ and $[a', c']$ respectively with $\dgam(x_i, s) < r + \nu$.   Then for any $i = 1, 2, 3$, there exists some $j$ so that $p(x_i)$ and $p(x_j)$ lie on different prongs of the tripod, so there is a path between them passing through the midpoint $m$ of the tripod.  Thus, we have 
\[ \dgam(x_i, Z) = \dgam(p(x_i), m)  \leq \dgam(p(x_i), p(x_j)) \leq \dgam(x_i, x_j) + 2 \nu \leq 2r + 4 \nu \]
where the last inequality follows from the fact that $\dgam(x_i, s) < r + \nu$.    This proves the claim. 
\end{proof}

\begin{lemma}\label{lem:boundedK}
  Let $r\ge 0$.  For any compact $K\subset \triples$, the set $\pi_r(K)$ is bounded.
\end{lemma}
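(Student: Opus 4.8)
The plan is to use Lemma \ref{lem:diambound} to reduce the problem to showing that $\pi_r(K)$ stays within a bounded distance of the basepoint $\idG$. Since each $\pi_r(a,b,c)$ has diameter at most $Q(r)$, it suffices to find a single point of $\pi_r(a,b,c)$ (or merely $\pi_r(K)$ as a whole) lying in a ball of fixed radius around $\idG$, with the radius depending only on $K$ and $r$. Equivalently, writing $D(a,b,c) = \dgam(\idG, \pi_r(a,b,c))$ (the distance from the identity to the projection set), I want to bound $\sup_{(a,b,c)\in K} D(a,b,c)$.

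First I would observe that $D$ is a function of the triple that behaves reasonably under the coarse projection. The key point is a form of upper semicontinuity: as $(a,b,c)$ varies continuously in $\triples$, the set $\pi_r(a,b,c)$ cannot suddenly run off to infinity. Concretely, fix a triple $(a_0,b_0,c_0) \in K$ and choose $x_0 \in \pi_{r'}(a_0,b_0,c_0)$ for some slightly smaller parameter $r' < r$ (nonempty by Remark \ref{rem:projection} for $r'$ large; if $r$ is not large enough we can argue directly since then $\pi_r$ is smaller than for large $r$, or simply note the statement is about a fixed $r$ and bounded sets remain bounded under enlarging $r$ only up to the $Q(r)$ control — here I will instead give the cleaner compactness argument below). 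For triples $(a,b,c)$ sufficiently close to $(a_0,b_0,c_0)$ in $\bgam^3$, the geodesics with endpoints in $\{a,b,c\}$ fellow-travel the corresponding geodesics with endpoints in $\{a_0,b_0,c_0\}$ on arbitrarily large balls around $\idG$ — this is a standard stability-of-geodesics fact in hyperbolic spaces. Hence $x_0$, which lies within $r-1$ of each geodesic of the old triple, lies within $r$ (say) of each geodesic of the new triple, so $x_0 \in N_{r-1}(\gamma)$ for all $\gamma \in \mathcal{G}(a,b,c)$ after a harmless adjustment, placing $x_0$ within bounded distance of $\pi_r(a,b,c)$. Therefore the function $(a,b,c) \mapsto D(a,b,c)$ is bounded on a neighborhood of $(a_0,b_0,c_0)$.

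Then I would cover $K$ by finitely many such neighborhoods, using compactness, to conclude that $D$ is bounded on $K$ by some constant $R = R(K,r)$. Combined with Lemma \ref{lem:diambound}, this gives $\pi_r(a,b,c) \subset N_{R + Q(r)}(\idG)$ for all $(a,b,c) \in K$, so $\pi_r(K) = \bigcup_{(a,b,c)\in K} \pi_r(a,b,c)$ is bounded, as required.

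The main obstacle is making the fellow-traveling step precise: I need that for a triple $(a_0,b_0,c_0)$ of \emph{distinct} boundary points, geodesics with those endpoints depend ``continuously'' on the endpoints in the sense of uniform fellow-traveling on compact sets, and that this persists for nearby triples which are still distinct (this uses that $\triples$ consists of distinct triples, so the endpoints stay a definite visual distance apart on a neighborhood, preventing the geodesics from degenerating). An alternative that avoids this entirely: since $G$ acts cocompactly on $\triples$ (Proposition \ref{prop:convergence_group}) and since $\pi_r$ is $G$--equivariant (the projection is defined purely in terms of geodesics, and $G$ acts by isometries on $\Gamma$), it is enough to bound $\pi_r$ on a single compact fundamental domain $F$ for the $G$--action. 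But this still requires bounding $\pi_r(F)$, so it does not remove the core difficulty — one still needs the local boundedness of $D$ on the compact set $F$, which is exactly the fellow-traveling argument. I would present the direct compactness-plus-fellow-traveling proof as the cleanest route.
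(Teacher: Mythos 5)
Your proof is correct and follows essentially the same route as the paper's: pick a point $x_0$ in a coarse projection of the fixed triple, use fellow-traveling of geodesics with nearby ideal endpoints to show $x_0$ remains in (or near) the projection of nearby triples, control the diameter with Lemma~\ref{lem:diambound}, and finish by covering $K$ with finitely many such neighborhoods. The only cosmetic difference is in bookkeeping: you shrink the projection parameter to $r' < r$ so that $x_0$ itself lands in $\pi_r(u,v,w)$ for nearby triples, whereas the paper keeps $x \in \pi_r(a,b,c)$ and observes that $x$ lands in the slightly fatter set $\pi_{r+2\nu}(u,v,w)$, then applies $Q(r+2\nu)$; both require the same initial reduction (increase $r$ if needed so projections are nonempty, justified by monotonicity of $\pi_r$ in $r$), which you noted in passing but could have stated more cleanly.
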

\begin{proof}
  Let $K\subset \triples$ be compact.  Increasing $r$ makes $\pi_r(K)$ larger.   Using Remark~\ref{rem:projection}, we can therefore assume that for every $(a,b,c)$ there is some $x\in \pi_r(a,b,c)$.

  For each $(a,b,c)\in K$, there is an open neighborhood $U$ of $(a,b,c)$ in $\triples$ such that, for each point $(u, v, w) \in U$, any geodesics joining points in $\{u,v,w\}$ come within $2 \nu + r$ of the point $x$.  In particular $x\in \pi_{r+2\nu}(u,v,w)$.  If $y\in \pi_r(u,v,w)$ then $\dgam(x,y)\le Q(r+2\nu)$, so $\pi_r(U)$ has diameter at most $2 Q(r+2\nu)$.

  By compactness we can cover $K$ with finitely many neighborhoods $U$ as in the last paragraph, so $\pi_r(K)$ is bounded.
\end{proof}

It will be convenient to choose a hyperbolicity constant for $\Gamma$ that simultaneously satisfies several properties.  The properties we use are collected in the following lemma.  

\begin{lemma}
  \label{lem:delta}
  There exists $\delta>0$ so that all of the following hold:
\begin{enumerate}[label = ($\delta$\arabic*)]
\item\label{item:deltathin} Every geodesic triangle in $\Gamma$ is $\delta$--thin.
\item\label{item:idealtriangles} Every geodesic bigon or triangle with vertices in $\Gamma\cup\bgam$ is $\delta$--slim.
\item\label{item:gromovproducts} For any point $p\in\Gamma$, and any $a,b,c\in\Gamma\cup\bgam$,
  \[ \gprod{a}{b}{p}\ge \min\{\gprod{a}{c}{p},\gprod{b}{c}{p}\}-\delta.\]
\item \label{item:surject} For all $p\in \Gamma$, $\pi_\delta^{-1}(p)$ is non-empty.
\item\label{item:ontoline} The set $\pi_\delta(\{a\}\times \{b\}\times (\bgam\minus\{a,b\}))$  contains every geodesic joining $a$ to $b$.
\end{enumerate}
\end{lemma}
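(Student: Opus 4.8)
The plan is to establish each of the five properties separately, each with its own constant, and then take $\delta$ to be the maximum (or a suitable sum) of these. The point is that each property is a standard consequence of hyperbolicity, holding for some constant depending only on $\nu$ (and on the fixed visual metric data and the fixed ``sufficiently large'' $r$ from Remark 1.7); once we know each holds for \emph{some} constant, enlarging $\delta$ preserves it, since being $C$--thin, $C$--slim, satisfying the Gromov inequality with defect $C$, and having nonempty $\pi_C^{-1}(p)$ or $\pi_C$--image containing a geodesic are all monotone in $C$.

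First, \ref{item:deltathin}: this is immediate from the hypothesis that $\dgam$ is $\nu$--hyperbolic, so any $\delta_1\ge \nu$ works. Next, \ref{item:idealtriangles}: it is standard (see e.g.\ \cite[III.H.1.17, III.H.3.13]{BH99}) that in a $\nu$--hyperbolic geodesic space, triangles and bigons with some or all vertices on the boundary are $\delta_2$--slim for a constant $\delta_2$ depending only on $\nu$; one approximates ideal vertices by points far out along the relevant rays and takes a limit, absorbing the bounded error. For \ref{item:gromovproducts}: the four-point condition gives the Gromov inequality with defect $\nu$ for points of $\Gamma$, and passing to $a,b,c\in\Gamma\cup\bgam$ costs only a further bounded additive constant by the definition of the Gromov product at infinity as a $\liminf$ (this is essentially the content of Lemma~\ref{lem:gprodest} together with the elementary three-point estimate); call the resulting constant $\delta_3$. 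For \ref{item:surject}: by Remark~\ref{rem:projection}, for $r$ large enough depending on $\nu$ the set $\pi_r(a,b,c)$ is nonempty for every triple; choose any triple $(a,b,c)$ whose geodesics pass within that $r$ of a given vertex $p$ — concretely, pick a bi-infinite geodesic through $p$ with distinct endpoints $a,b$, and a third point $c$, so $p$ lies on $[a,b]$ and hence $p\in\pi_r(a,b,c)$; this shows $\pi_r^{-1}(p)\neq\emptyset$, giving a constant $\delta_4$. For \ref{item:ontoline}: given a geodesic $[a,b]$ and a point $p\in[a,b]$, we must exhibit $c\in\bgam\setminus\{a,b\}$ with $p\in\pi_\delta(a,b,c)$; since the geodesics $[a,b],[a,c],[b,c]$ form a $\delta_2$--slim ideal triangle, the point $p$ on $[a,b]$ is within $\delta_2$ of $[a,c]\cup[b,c]$, and by choosing $c$ appropriately (so that $p$ lies in the ``thick part'' near all three sides, which is possible when $p$ is not too close to $a$ or $b$; the cases where $p$ is near an endpoint are handled by taking $c$ near that endpoint) one gets $p\in N_{r-1}(\gamma)$ for all $\gamma\in\mathcal{G}(a,b,c)$ once $r\ge \delta_2+2$, hence $p\in\pi_\delta(a,b,c)$; this gives $\delta_5$.

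Finally set $\delta=\max\{\delta_1,\dots,\delta_5\}$ (and, to be safe, $\ge\nu$), and observe that all five properties, being monotone in the constant as noted above, then hold simultaneously with this single $\delta$.

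I expect the main obstacle to be \ref{item:ontoline}: unlike the others, it is not purely a slimness/defect statement but requires producing a specific third boundary point $c$ making a given point $p$ on a given geodesic $[a,b]$ land in \emph{every} geodesic neighborhood $N_{r-1}(\gamma)$, $\gamma\in\mathcal G(a,b,c)$, and in particular one must handle the degenerate behavior when $p$ is near an endpoint $a$ or $b$ (where $[a,c]$ or $[b,c]$ may fellow-travel $[a,b]$ only after a long initial segment). The resolution is a careful but routine case analysis: for $p$ deep in the interior of $[a,b]$, any $c$ works for $r$ large; for $p$ within distance $R$ of, say, $a$, choose $c$ converging to $a$ through a different geodesic ray so that $[a,c]$ passes within $O(\delta)$ of $p$. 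The remaining items are standard and the monotonicity trick makes the assembly painless.
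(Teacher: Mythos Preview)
Your overall strategy matches the paper's: establish each item with its own constant, then take the maximum, relying on the monotonicity you correctly note. Items \ref{item:deltathin}--\ref{item:gromovproducts} are handled essentially identically (the paper simply cites \cite[III.H]{BH99}). For \ref{item:surject} there is a small slip: placing $p$ on some geodesic $[a,b]$ and then picking an arbitrary third point $c$ only gives $p$ close to \emph{one} side of the triangle, not to $[a,c]$ and $[b,c]$, so it does not yet yield $p\in\pi_r(a,b,c)$. This is easily repaired once \ref{item:ontoline} is in hand (or, as the paper does, by invoking $G$--equivariance: the Cayley graph action is vertex-transitive, so it suffices that $\pi_\delta^{-1}(\idG)\neq\emptyset$).

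For \ref{item:ontoline} there is a genuine gap. Your proposed case analysis ``$p$ deep in the interior of $[a,b]$'' versus ``$p$ within distance $R$ of $a$'' is ill-posed: $a,b\in\bgam$ are ideal, so every $p\in[a,b]$ is at infinite distance from both, and there is no ``near an endpoint'' case. More substantively, the assertion that ``any $c$ works for $r$ large'' is false in the sense needed here: if $c$ is chosen so that the quasi-center of the ideal triangle $a,b,c$ is far from $p$ (for instance $c$ very close to $a$ in $\bgam$), then $p$ can be arbitrarily far from $[b,c]$, and no \emph{uniform} $r$ handles all $(a,b,p)$ simultaneously. Slimness alone only places $p$ near $[a,c]\cup[b,c]$, not near both. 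What is missing is a mechanism to choose $c$ with a \emph{uniform} bound. The paper supplies exactly this: given $p$ on $[a,b]$, take $c\in\bgam$ minimizing $\max\{\gprod{a}{c}{p},\gprod{b}{c}{p}\}$, and use cocompactness (vertex-transitivity) of the $G$--action to bound this minimax independently of $a,b,p$. A uniform bound on these two Gromov products translates directly into a uniform bound on the distances from $p$ to any geodesics $[a,c]$ and $[b,c]$, which is what $p\in\pi_\delta(a,b,c)$ requires.
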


\begin{proof}
  Since $\Gamma$ is $\nu$--hyperbolic, items~\ref{item:deltathin} and~\ref{item:idealtriangles} hold for any $\delta\ge 2\nu$.
   For item~\ref{item:gromovproducts} see \cite[III.H.3.17.(4)]{BH99}.   
Item~\ref{item:surject} follows from $G$-equivariance and the fact that $\pi_\delta(a,b,c)$ is nonempty when $\delta$ is large enough.    
For~\ref{item:ontoline}, suppose we are given a point $z$ on a geodesic $\gamma$ joining $a$ and $b$, take $c \in \bgam$ minimizing 
$\max\{\gprod{a}{c}{z}, \gprod{b}{c}{z} \}$.  Co-compactness of the action of $G$ allows one to bound this minimum from above, independently of $a, b$ and $c$, and this can be used to give an upper bound on the distance from $z$ to any geodesic joining $a$ or $b$ with $c$. 
\end{proof}

\begin{notation}
  For the rest of the paper we fix some $\delta>0$  so the conclusions of Lemma~\ref{lem:delta} hold,
 and denote the coarse projection $\pi_\delta$ by $\pi$.
  \end{notation}

\begin{definition}[Minimum separation]\label{def:minsep}
  For $x = (a,b,c)\in \triples$, we define
  \[\minsep(x) = \min\{\dvis(a,b),\dvis(a,c),\dvis(b,c)\}.\]  Notice that $1/\minsep$ is a proper function on $X$, so $\minsep$ is bounded away from zero on any compact set.  For a subset $D\subset \triples$, we define $\minsep(D) = \inf\{\minsep(x)\st x\in D\}$.
\end{definition}

\subsection{A criterion for a set to be close to a geodesic}
The following lemma gives a criterion for a piecewise geodesic curve to be close to a geodesic.  There are various similar statements in the literature (e.g. \cite[Lemma 4.2]{Minasyan05}, \cite[III.H.1.13]{BH99}), but this form will be convenient for us.
We use it to prove Lemma \ref{lem:bi-infinite} which is the main technical ingredient of this section.  
\begin{lemma}\label{lem:brokengeodesic}
 Let $X$ be a $\delta$--hyperbolic geodesic metric space, and let $l>0$.
 Suppose that $c$ is a piecewise geodesic in $X$ made of segments of length greater than $2l+8\delta$, with Gromov products in the corners at most $l$.  Let $\gamma$ be a geodesic with the same endpoints as $c$.  The Hausdorff distance between $\gamma$ and $c$ is at most $l+4\delta$.
\end{lemma}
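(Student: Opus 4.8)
The plan is to encode the hypothesis as the statement that the broken geodesic does not backtrack, propagate this along the corners by induction, deduce that every vertex is uniformly close to $\gamma$, and then fill in the remaining points of $c$ and of $\gamma$ by soft arguments. Write $c=\sigma_1\cdots\sigma_n$ with $\sigma_i=[x_{i-1},x_i]$ a geodesic of length $>2l+8\delta$, so the corner hypothesis reads $\gprod{x_{i-1}}{x_{i+1}}{x_i}\le l$ for $1\le i\le n-1$, and let $\gamma$ be a geodesic from $x_0$ to $x_n$. If $n=1$ then $c$ and $\gamma$ are geodesics with the same endpoints, hence within $\delta$ of each other, so we may assume $n\ge 2$.

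The core estimate, which I would prove by induction on $i$, is
\[ \gprod{x_0}{x_{i+1}}{x_i}\le l+\delta \qquad (0\le i\le n-1). \]
The base case $i=0$ is trivial since the left side is $0$. For the inductive step, the identity $\gprod{x_0}{x_{i-1}}{x_i}=d(x_{i-1},x_i)-\gprod{x_0}{x_i}{x_{i-1}}$, the inductive hypothesis $\gprod{x_0}{x_i}{x_{i-1}}\le l+\delta$, and the length bound $d(x_{i-1},x_i)>2l+8\delta$ together give $\gprod{x_0}{x_{i-1}}{x_i}>l+\delta$; then the $\delta$--inequality for Gromov products based at $x_i$ applied to $x_0,x_{i-1},x_{i+1}$, namely $\gprod{x_{i-1}}{x_{i+1}}{x_i}\ge\min\{\gprod{x_0}{x_{i-1}}{x_i},\gprod{x_0}{x_{i+1}}{x_i}\}-\delta$, combined with the corner bound $\gprod{x_{i-1}}{x_{i+1}}{x_i}\le l$, forces the minimum to be realized by $\gprod{x_0}{x_{i+1}}{x_i}$, giving $\gprod{x_0}{x_{i+1}}{x_i}\le l+\delta$ and closing the induction. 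Running the same argument on the reversed broken geodesic yields $\gprod{x_n}{x_{i-1}}{x_i}\le l+\delta$ for $1\le i\le n$. Then, for an interior vertex $x_i$, one has $\gprod{x_n}{x_{i+1}}{x_i}=d(x_i,x_{i+1})-\gprod{x_n}{x_i}{x_{i+1}}>2l+8\delta-(l+\delta)=l+7\delta$, so a third application of the $\delta$--inequality at $x_i$ (to $x_0,x_{i+1},x_n$) forces $\gprod{x_0}{x_n}{x_i}\le l+2\delta$. By thinness of the triangle with vertices $x_0,x_i,x_n$, this shows $d(x_i,\gamma)$ is at most $l$ plus a bounded multiple of $\delta$.

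It remains to pass from the vertices to all of $c$ and all of $\gamma$. For a point $y$ on a segment $\sigma_i$, both endpoints of $\sigma_i$ lie within the vertex bound of $\gamma$, so comparing $\sigma_i$ with the subsegment of $\gamma$ spanned by nearest-point projections of $x_{i-1}$ and $x_i$ (a slim quadrilateral) shows $d(y,\gamma)$ satisfies the same kind of bound; thus $c\subseteq N_{l+4\delta}(\gamma)$ once the additive $\delta$--terms are optimized. For the reverse inclusion, nearest-point projection onto $\gamma$ is coarsely $1$--Lipschitz (with additive error $O(\delta)$), so it carries the connected compact set $c$ to a coarsely connected subset of $\gamma$ meeting small neighborhoods of both endpoints of $\gamma$; such a subset is $O(\delta)$--dense in $\gamma$, whence $\gamma\subseteq N_{l+4\delta}(c)$ as well, and $\dHaus(c,\gamma)\le l+4\delta$. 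The only genuinely delicate point is the bookkeeping of the additive $\delta$--terms: the inductive scheme naturally outputs bounds of the form $l+c\delta$, and squeezing $\dHaus(c,\gamma)$ down to exactly $l+4\delta$ requires invoking the precise form of the thinness hypotheses in force and optimizing each comparison, rather than any further idea.
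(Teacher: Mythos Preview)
Your inductive scheme on Gromov products is correct and does yield the qualitative result: the bound $\gprod{x_0}{x_{i+1}}{x_i}\le l+\delta$, its reverse, and the deduction $\gprod{x_0}{x_n}{x_i}\le l+2\delta$ are all sound, and the passage to segment points and the reverse inclusion are standard as you say.  The approach is, however, genuinely different from the paper's.  Rather than propagating estimates inductively along the chain of corners, the paper considers the single point $x$ on $c$ that is farthest from $\gamma$, and observes (via one triangle comparison with a nearest point $x'\in\gamma$) that $x$ must lie within $2\delta$ of some breakpoint $p_i$.  It then bounds $d(x,\gamma)$ using only the local data at that corner: the geodesic $\sigma=[p_{i-1},p_{i+1}]$ and the quadrilateral formed by $\sigma$ together with nearest-point segments from $p_{i\pm1}$ to $\gamma$.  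This gives $d(x,\gamma)\le l+3\delta$ directly, and the Hausdorff bound $l+4\delta$ then follows from the connectedness argument you sketch.

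What each approach buys: your global induction produces more information along the way (control on all the products $\gprod{x_0}{x_j}{x_i}$, hence an explicit quasi-geodesic estimate for $c$), but the $\delta$--losses accumulate, so that as written the segment step via the slim quadrilateral gives roughly $l+5\delta$ rather than $l+3\delta$, and you correctly flag that reaching exactly $l+4\delta$ would require further optimization.  The paper's farthest-point trick sidesteps this bookkeeping entirely by localizing the problem to a single corner, which is why the sharp constant falls out without effort.
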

  We remark that this lemma only uses $\delta$--hyperbolicity, and not the other properties from Lemma~\ref{lem:delta}.
\begin{proof}
  A standard argument, using only the fact that $\gamma$ is a geodesic in a $\delta$-hyperbolic space, shows that it is enough to prove $c$ is contained in the closed $(l + 3\delta)$--neighborhood of $\gamma$. We will not give the details as this is classical.  
    We write $c$ as a concatenation $c_1\cdots c_k$ of geodesics so each $c_i$ joins some $p_{i-1}$ to some $p_i$.  The endpoints of $\gamma$ are $p_0$ and $p_k$.
    If $k\le 2$, we are done by slimness of triangles, so we assume $k\ge 3$.

    Let $x$ be the farthest point from $\gamma$ on $c$, and let $M=d(x,\gamma)$.  Without loss of generality, we suppose that $M>2\delta$.
    It is then straightforward to show
    that $x$ is within $2\delta$ of some breakpoint $p_i$.  (Consider the triangle made up of the segment $c_i$ containing $x$, together with geodesics joining the endpoints of $c_i$ to a point $x'\in \gamma$ closest to $x$.)
    
  Since $M>2\delta$, the breakpoint $p_i$ cannot be either endpoint of the geodesic $\gamma$; in particular $i\notin\{0,k\}$.  There are two cases, depending on whether or not $i\in\{1,k-1\}$.

  We suppose $i\notin\{1,k-1\}$; the case $i\in \{1,k-1\}$ is similar but easier.
  By the assumption that segments are long, $d(x,\{p_{i\pm 1}\})>2l+6\delta$.
  Choose a geodesic $\sigma$ joining $p_{i-1}$ to $p_{i+1}$.  By the assumption on Gromov products in the corners, we have $\gprod{p_{i-1}}{p_{i+1}}{p_i}\le l$.   It follows that $d(x,\sigma)\leq l+\delta$.  Let $y$ be a closest point to $p_{i-1}$ in $\gamma$, and let $z$ be a closest point to $p_{i+1}$ in $\gamma$.  Choose geodesics $[y,z]\subset \gamma$, $[p_{i-1},y]$, and $[p_{i+1},z]$.  The point $x$ lies within $l+3\delta$ of some point $w$ on the union of these three geodesics.  We claim that $w\in [y,z]$, so we have $M\le l+3\delta$.

  Indeed, suppose that $w\in [p_{i-1},y]$ (the case $w\in [p_{i+1},z]$ being identical).  Now we have
\begin{align*}
  0 \leq d(x,y)-d(p_{i-1},y) & \leq d(x,w)+d(w,y) - (d(p_{i-1},w)+d(w,y))\\
& = d(x,w)-d(p_{i-1},w)\\
& \leq d(x,w)-(d(x,p_{i-1})-d(x,w))\\
& = 2d(x,w)-d(x,p_{i-1})\\
& \leq 2(l+3\delta) - d(x,p_{i-1}) < 0
\end{align*}
a contradiction.
We have thus established that $M \le l+ 3\delta$, and so $c$ lies in the $l+3\delta$--neighborhood of $\gamma$.
\end{proof}

\begin{definition}
  Let $r>0$, and let $M$ be a metric space.  A subset $S$ of $M$ is \emph{$r$--connected} if any two points $p,q$ of $S$ can be connected by a chain of points in $S$,
  \[ p = p_0,\ p_1,\ldots, p_k = q,\]
  so that $d_M(p_i,p_{i+1})\leq r$ for all $i$.
  An \emph{$r$--connected component of $S$} is a maximal subset of $S$ which is $r$--connected.
\end{definition}

\begin{lemma}
  \label{lem:bi-infinite}
  Let $H > 0$, and let $R > 24 H + 16 \delta$.
  Let $S\subset \Gamma$ be a $\frac{R}{4}$-connected set so that for every $s\in S$, there is a bi-infinite geodesic $\gamma_s$ satisfying: 
  \[ S\cap B_R(s)\subset N_H(\gamma_s),\mbox{ and } \gamma_s\cap B_R(s)\subset N_H(S).\]
  Then there is an oriented
  bi-infinite geodesic $\gamma$ so that
  
  \begin{enumerate}
  \item $\dHaus(\gamma,S)\le 3 H + 6\delta$; and
  \item for every $s\in S$, we may orient $\gamma_s$ so the Gromov products $\gprod{\gamma^{+\infty}}{\gamma_s^{+\infty}}{s}$ and $\gprod{\gamma^{-\infty}}{\gamma_s^{-\infty}}{s}$ are bounded below by $R - (4H+10\delta)$.  
  \end{enumerate} \end{lemma}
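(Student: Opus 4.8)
The plan is to build the geodesic $\gamma$ by taking a limit of longer and longer finite "chains" of points in $S$, using the hypothesis to control the Gromov products in the corners of the associated broken geodesic, so that Lemma~\ref{lem:brokengeodesic} applies. Concretely, fix a basepoint $s_0\in S$. Since $S$ is $\frac R4$-connected, I first pass to a subset of $S$ lying on an "efficient" chain: starting from $s_0$ and walking in one direction, I greedily pick a sequence of points $s_0, s_1, s_2,\dots$ in $S$ with consecutive distances comparable to $R$ — roughly, $s_{i+1}$ should be a point of $S$ at distance close to, but at most, $R$ from $s_i$, chosen (using the inclusion $S\cap B_R(s_i)\subset N_H(\gamma_{s_i})$) to lie within $H$ of $\gamma_{s_i}$ on the "forward" side. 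The local hypothesis $\gamma_s\cap B_R(s)\subset N_H(S)$ guarantees such forward points of $S$ exist out to distance nearly $R$ along $\gamma_{s_i}$, so this process does not terminate and the $s_i$ march off to infinity. Doing the same in the opposite direction produces a bi-infinite sequence $(s_i)_{i\in\mathbb Z}$.

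Next I estimate the Gromov product at each corner $s_i$. The point $s_i$ lies $H$-close to $\gamma_{s_{i-1}}$ and is (by construction) the basepoint of $\gamma_{s_i}$, while $s_{i-1}$ and $s_{i+1}$ lie $H$-close to $\gamma_{s_i}$ at distance roughly $R$ out along opposite-ish directions; comparing $\gamma_{s_{i-1}}$ and $\gamma_{s_i}$ on the ball $B_R(s_i)$ (both contain $H$-approximations of the same chunk of $S$) and using $\delta$-hyperbolicity (Lemma~\ref{lem:delta}\ref{item:deltathin}, and the Gromov-product inequality \ref{item:gromovproducts}), I get that $s_{i-1}$ and $s_{i+1}$ sit on roughly opposite sides of $s_i$ along a single geodesic up to error $O(H+\delta)$, so the Gromov product $\gprod{s_{i-1}}{s_{i+1}}{s_i}$ is at most some $l = O(H+\delta)$. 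With $R > 24H+16\delta$ chosen large relative to this $l$ and to $\delta$, the segments of the broken geodesic $c$ through $s_0,\dots,s_k$ have length $>2l+8\delta$, so Lemma~\ref{lem:brokengeodesic} says $c$ is within $l+4\delta$ of the geodesic $[s_{-k},s_k]$. Taking $k\to\infty$, a standard Arzelà–Ascoli / diagonal argument produces a bi-infinite geodesic $\gamma$ (oriented so $\gamma^{\pm\infty}=\lim_{i\to\pm\infty}s_i$) with every $s_i$ within $l+4\delta$ of $\gamma$. Tracking the constants carefully — this is where I must be a little careful rather than cavalier — yields the Hausdorff bound $\dHaus(\gamma,S)\le 3H+6\delta$ of part~(1): every $s_i$ is close to $\gamma$ by the above; a general $s\in S$ is $\frac R4$-close to the chain hence (via the local hypothesis again) $H$-close to some $\gamma_{s_i}$ on the relevant ball, which in turn tracks $\gamma$; and conversely every point of $\gamma$ is close to some segment of $c$ hence to $S$.

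For part~(2), fix $s\in S$ and orient $\gamma_s$ so its two ends follow $S$ in the same directions as $\gamma$'s. The key point is that on the ball $B_R(s)$ both $\gamma$ and $\gamma_s$ stay uniformly (distance $O(H+\delta)$) close to $S$ and hence to each other; so there are points $a\in\gamma$ and $b\in\gamma_s$ at distance about $R$ from $s$ along the positive direction with $\dgam(a,b)=O(H+\delta)$, giving $\gprod{a}{b}{s}\ge R - O(H+\delta)$, and then Lemma~\ref{lem:gprodest} (applied with the roles of endpoints and interior points suitably arranged) upgrades this to $\gprod{\gamma^{+\infty}}{\gamma_s^{+\infty}}{s}\ge R-(4H+10\delta)$, and symmetrically for the negative ends. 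The main obstacle I anticipate is bookkeeping: making the greedy chain construction genuinely work (ensuring forward progress of at least some definite fraction of $R$ at each step, and that the "forward side" of $\gamma_{s_i}$ is well-defined and consistent), and then chasing the $\delta$-hyperbolic inequalities to land exactly on the stated constants $3H+6\delta$ and $R-(4H+10\delta)$ rather than merely $O(H+\delta)$. None of the individual steps is deep, but the constant-tracking requires care, and the choice $R>24H+16\delta$ is presumably exactly what is needed to make the chain segments long enough for Lemma~\ref{lem:brokengeodesic} while leaving enough room in $B_R(s)$ to see the two geodesics fellow-travel.
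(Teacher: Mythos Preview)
Your overall strategy matches the paper's: build a bi-infinite sequence $(s_i)$ in $S$ with steps of size roughly $R/2$, bound the corner Gromov products by a constant of order $H$ (the paper gets $l=5H$), apply Lemma~\ref{lem:brokengeodesic}, and pass to a limit. The argument for part~(2) is also along the right lines.

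There is, however, a genuine gap in your argument for the Hausdorff bound, specifically the assertion that ``a general $s\in S$ is $\frac R4$-close to the chain.'' This is not what $\frac R4$--connectedness says: it only gives you an $\frac R4$--chain $s_0=p_0,p_1,\dots,p_k=s$ inside $S$, not that $s$ itself (or any $p_i$) lies within $\frac R4$ of your constructed sequence $(s_j)_{j\in\mathbb Z}$. A priori the chain $(p_i)$ could wander far from $(s_j)$. The paper closes this gap by an induction along the chain: assuming $p_i$ lies within $\frac R4+2H$ of some $s_{j(i)}$, one has $\dgam(p_{i+1},s_{j(i)})\le \frac R2+2H$, so $p_{i+1}$ lies within $H$ of a point $\gamma_{s_{j(i)}}(t)$ with $|t|<\frac{3R}{4}$; since the points $\gamma_{s_{j(i)}}(0),\gamma_{s_{j(i)}}(\pm\frac R2)$ are each within $H$ of one of $s_{j(i)-1},s_{j(i)},s_{j(i)+1}$, one concludes $p_{i+1}$ is within $\frac R4+2H$ of some $s_{j(i+1)}\in\{s_{j(i)-1},s_{j(i)},s_{j(i)+1}\}$. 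Only after this induction do you know $s=p_k$ is close to some $s_j$, and then the local hypothesis and thin-quadrilateral estimates give $\dgam(s,\gamma)\le 3H+6\delta$. This inductive step is the one place where the $\frac R4$--connectedness hypothesis is really used, and it is not just bookkeeping---without it the argument does not go through.
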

\begin{proof}
  Choose any $s_0$ in $S$, and let $\gamma_0 = \gamma_{s_0}$ be a bi-infinite geodesic as in the hypothesis, parameterized so that $\gamma_0(0)$ is within $H$ of $s_0$.  Since the points $\gamma_0(\pm \frac{R}{2})$ lie in $B_R(s)$, there are points $s_{\pm 1}\in S$ whose distances from $\gamma_0(\pm\frac{R}{2})$ are at most $H$.  Since $\dgam(s_0,s_{\pm 1})\le \frac{R}{2} + 2 H$ and $\dgam(s_{-1},s_1)\ge R- 2 H$, we deduce $\gprod{s_{-1}}{s_1}{s_0}\leq 3 H$.
In particular we have the following estimates:
  \begin{equation}\label{eq:estimateat0}
    \dgam(s_0,s_{\pm 1}) \ge \frac{R}{2}-2H\mbox{ and }\gprod{s_{-1}}{s_1}{s_0}\leq 3 H,
  \end{equation}
  Now we inductively find $s_i$ and $\gamma_i$ for all integers $i$.

  For clarity we focus on $i>1$.  The construction for $i<0$ is entirely analogous.
  Suppose we have chosen points $s_{-1},s_0,\ldots s_{i-1}$ in $S$, and that for each positive $j\le i-1$ we have chosen a 
  bi-infinite geodesic $\gamma_j = \gamma_{s_j}$, and some $t_j$ within $4H$ of $-\frac{R}{2}$ so that
  \begin{align*}\label{eq:inducthyp}
    \dgam(\gamma_{i-1}(0),s_{i-1})& \le H,\mbox{ and }\\
    \dgam(\gamma_{i-1}(t_{i-1}),s_{i-2})& \le H.
  \end{align*}
  Since $R>8H$, the number $t_{i-1}$ is negative.
  The point $\gamma_{i-1}(\frac{R}{2})$ lies in the $R$--ball around $s_{i-1}$, so we may choose a point $s_i\in S$ so that $\dgam(s_i,\gamma_{i-1}(\frac{R}{2}))\le H$.  Let $\gamma_i$ be the geodesic $\gamma_{s_i}$ provided by the hypothesis of the lemma.  We can assume that $\gamma_i(0)$ is within $H$ of $s_i$.  The distance $\dgam(s_{i-1},s_i)$ differs from $\frac{R}{2}$ by at most $2H$.  Thus for some $t_i$ of absolute value in $[\frac{R}{2}-4H,\frac{R}{2}+4H]$, we have $\dgam(\gamma_i(t_i),s_{i-1})\le H$.  We parameterize $\gamma_i$ so that $t_i<0$.  This completes the inductive construction.

  From the construction, we have
  \begin{equation}\label{eq:segmentbound}
    \dgam(s_i,s_{i+1})\le \frac{R}{2} + 2 H
  \end{equation}
  and
  \begin{equation*}
    \dgam(s_{i-1},s_{i+1}) \ge \frac{R}{2}+ |t_i| - 2 H \ge R - 6 H.
  \end{equation*}
  (The lower bound when $i=0$ is slightly better.)
  This implies a bound on Gromov products
  \begin{equation}\label{eq:gprodbound}
    \gprod{s_{i-1}}{s_{i+1}}{s_i} \le 5 H.
  \end{equation}

  Let $c_k$ be a piecewise geodesic formed by concatenating geodesics
  \[ [s_{-k},s_{-k+1}]\cdots [s_{k-1},s_k]. \]
  We verify the hypotheses of Lemma~\ref{lem:brokengeodesic} with $l = 5H$.  The inequality \eqref{eq:gprodbound} gives the bound on Gromov products in in the corners.
  The inequality \eqref{eq:segmentbound} gives that the segments $[s_i,s_{i+1}]$ have length at least $\frac{R}{2} - 2H > 10 H + 8 \delta = 2 l + 8\delta$ as required.  Thus if $\beta_k$ is the geodesic joining the endpoints of $c_k$, we have $\dHaus(c_k,\beta_k)\le H + 4\delta$.

  Since $\Gamma$ is proper, and the geodesics $\beta_k$ all pass through the $(H+4\delta)$--ball about $s_0$, they subconverge to a bi-infinite geodesic $\gamma$.  Notice that all the segments $[s_i,s_{i+1}]$ lie in the $(H+4\delta)$--neighborhood of $\gamma$.
  We will show this $\gamma$ satisfies the conclusions of the lemma.

  If $s\in S$, then there is a $\frac{R}{4}$--coarse path joining $s_0$ to $s$, that is to say there exist points $s_0=p_0 ,\  p_1,\  \ldots ,\ p_k = s$ in $S$ satisfying. 
  \[ \dgam(p_i,p_{i+1})\le \frac{R}{4},\ \forall i. \]

  We first claim that for each $p_i$, there is some $s_{j(i)}$ with $\dgam(s_{j(i)},p_i)\le \frac{R}{4}+2 H$.  Clearly this is true for $p_0$.  Arguing by induction, we see that $\dgam(p_{i+1},s_{j(i)})$ is at most $\frac{R}{2}+2 H$.  In particular there is a point $q$ on $\gamma_{s_{j(i)}}$ within $H$ of $p_{i+1}$.  Recalling that $\gamma_{s_{j(i)}}(0)$ lies within $H$ of $s_{j(i)}$, we see that $q = \gamma_{s_{j(i)}}(t)$ for some $t$ with
  \[|t| \le \frac{R}{2} + 3H<\frac{3}{4}R.\]
  Thus for some $t' \in \{-\frac{R}{2}, 0, \frac{R}{2}\}$, we have $|t-t'|\le \frac{R}{4}$.
    Since the points $\gamma_{s_{j(i)}}(\pm \frac{R}{2})$ are within $H$ of $s_{j(i)\pm 1}$, there is some $s_{j(i+1)}\in \{s_{j(i)-1},s_{j(i)},s_{j(i)+1}\}$ so that $\dgam(p_{i+1},s_{j(i+1)}) \le 2H + \frac{R}{4}$, as desired.
    
  Now let $s_j = s_{j(k)}$, so we have $\dgam(s,s_j)\le \frac{R}{4} + 2H$, and let $q$ be a closest point to $s$ on $\gamma_j$.  Without loss of generality we suppose that $q = \gamma_j(t)$ for $t\ge 0$.  Any quadrilateral with corners $s_j, s_{j+1},\gamma_j(0),\gamma_j(\frac{R}{2})$ is $2\delta$--thin, so there is a point $r$ on $[s_j,s_{j+1}]$ within $H+2\delta$ of $q$.  This point $r$ is within $H+4\delta$ of some point $z$ on $\gamma$.  Adding up the constants we have
  \begin{align*}
    \dgam(s,z) & \le \dgam(s,q) + \dgam(q,r) + \dgam(r,z)\\
    & \le H + H + 2\delta + H + 4\delta = 3H + 6\delta.
  \end{align*}
  This shows
  \begin{equation}\label{eq:H1}
    S \subset N_{3H + 6\delta}(\gamma).    
  \end{equation}

  Conversely, let $x\in \gamma$.  Then $x\in \beta_k$ for some $k$, and so for some $i$, there is a point $y\in [s_i,s_{i+1}]$ with $\dgam(x,y)\le H + 4\delta$.
  This point is within $H+2\delta$ of a point on $\gamma_i$, which is within $H$ of a point of $S$, so we have
  \begin{equation}
    \label{eq:H2}
    \gamma \subset N_{3H+6\delta} (S) .    
  \end{equation}

  Together, \eqref{eq:H1} and \eqref{eq:H2} imply the first statement of the Lemma, that is to say the bound on Hausdorff distance.
  It remains to show the statement about Gromov products.
  Breaking symmetry, we consider just the ray $\gamma_s|[0,\infty)$.  Let $y'$ be a point on $\gamma_s|[0,\infty)$ at distance $R$ from $s$.  Let $s'\in S$ be a point within $H$ of $y'$, and let $z'$ be a point on $\gamma$ within $3H + 6\delta$ of $s'$.  Let $\alpha$ be a ray starting at $s$ with limit point $\gamma_s^{+\infty}$, and let $\beta$ be a ray starting at $s$ with limit point $\gamma^{+\infty}$.  There are points $y$ on $\alpha$ and $z$ on $\beta$ which are within $\delta$ of $y'$, $z'$, respectively.  We have $\dgam(s,y)\ge R-\delta$, $\dgam(s,z) \ge R - (4H + 7\delta)$ and $\dgam(y,z) \le 3H + 8\delta$, so
  \[ \gprod{y}{z}{s} \ge \frac{1}{2}(R - \delta + R-(4H + 7\delta) - (4 H + 8\delta)) = R-(4H + 8\delta). \]
  Lemma~\ref{lem:gprodest} allows us to conclude $\gprod{\gamma_s^{+\infty}}{\gamma^{+\infty}}{s} \ge R - (4H + 10\delta)$ as desired.
\end{proof}

\section{An equivariant map from $X\times\bgam$ to itself.}\label{sec:fdef}
The first step in the proof of Theorem~\ref{thm:main} is the following construction, which can be thought of as a generalization of that in \cite[Lemma 3.1]{BowdenMann}. 
If $X$ is a space with a proper, free and cocompact action of $G$, and $\rho\from  G \to \Homeo(Y)$ an action of $G$ on a topological space, one can capture the information of this action as the holonomy of a foliated $Y$-bundle over $X/G$; this is simply the quotient of $X \times Y$ by the diagonal action of $G$.  Here the case of interest to us is when $Y = \bgam = S^n$.  The following proposition gives a construction of a ``nice" map between the foliated bundles associated to the boundary action $\rho_0$ and a small perturbation $\rho$.

The same proof works with any manifold $Y$ in place of $S^n$, and any two nearby actions of an arbitrary group $G$ on the space, so we state it in this general context, as follows. 

Let $Y$ be a metric space such that $\Homeo(Y)$ is metrizable and locally contractible.   For instance, one may take $Y$ to be any compact manifold, in which case local contractibility of $\Homeo(Y)$ follows from Edwards--Kirby \cite{EdwardsKirby71}.  
Metrizability of $\Homeo(Y)$ has the following easy consequence.  
\begin{observation}\label{obs:VfromW} 
  Let $W$ be a neighborhood of the identity in $\Homeo(Y)$, and let $F\subset \Homeo(Y)$ be finite.  Then there is a neighborhood $V \subset W$ of the identity so that the union
  \[ \bigcup_{f\in F} f V f^{-1} V \]
  lies in $W$.  
\end{observation}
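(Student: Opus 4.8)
The plan is to reduce the statement to continuity of the group operations of $\Homeo(Y)$. Recall that $\Homeo(Y)$ is a topological group (here also metrizable, which is what ensures this in our setting): in particular composition $\Homeo(Y)\times\Homeo(Y)\to\Homeo(Y)$ is continuous, and for each fixed $f\in\Homeo(Y)$ the conjugation $c_f\colon g\mapsto fgf^{-1}$ is a homeomorphism of $\Homeo(Y)$ fixing the identity $\identity$.

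First I would treat one element $f\in F$ at a time. Consider the map $\Phi_f\colon\Homeo(Y)\times\Homeo(Y)\to\Homeo(Y)$ given by $\Phi_f(a,b)=faf^{-1}b=c_f(a)\cdot b$. This is continuous, being the composite of $(a,b)\mapsto(c_f(a),b)$ with composition in $\Homeo(Y)$, and $\Phi_f(\identity,\identity)=\identity\in W$. Since products of neighborhoods form a neighborhood base of $(\identity,\identity)$ in the product space, continuity of $\Phi_f$ at $(\identity,\identity)$ yields a neighborhood $V_f$ of $\identity$ with $\Phi_f(V_f\times V_f)\subseteq W$; unwinding the notation, this is precisely the statement $fV_ff^{-1}V_f\subseteq W$.

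Then I would intersect. Set $V=W\cap\bigcap_{f\in F}V_f$. Since $F$ is finite, this is a finite intersection of neighborhoods of $\identity$, hence itself a neighborhood of $\identity$, and $V\subseteq W$ by construction. For each $f\in F$ we have $V\subseteq V_f$, and the product-set operation $(A,B)\mapsto AB$ on subsets of $\Homeo(Y)$ is monotone in each variable, so $fVf^{-1}V\subseteq fV_ff^{-1}V_f\subseteq W$. Taking the union over the finitely many $f\in F$ gives $\bigcup_{f\in F}fVf^{-1}V\subseteq W$, as required.

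I do not anticipate any genuine difficulty. The two points that want a moment's attention are: the continuity of composition and of the conjugations $c_f$ in $\Homeo(Y)$ (this is exactly where the hypothesis on $\Homeo(Y)$ enters), and the observation that a finite intersection of neighborhoods of $\identity$ is again a neighborhood — which is precisely why finiteness of $F$ is needed.
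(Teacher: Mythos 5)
Your proof is correct, and it is the natural argument. The paper states Observation~\ref{obs:VfromW} without proof, offering only the remark that it is an ``easy consequence'' of metrizability of $\Homeo(Y)$, so there is no competing argument in the text to compare against; what you wrote is exactly the kind of routine verification the authors are leaving to the reader. The reduction to continuity of $\Phi_f(a,b)=faf^{-1}b$ at $(\identity,\identity)$, followed by a finite intersection, is clean and complete.

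One small imprecision worth flagging: the parenthetical ``here also metrizable, which is what ensures this in our setting'' overstates things slightly. Metrizability of $\Homeo(Y)$ alone does not make it a topological group; what you actually need is continuity of composition and of each conjugation $c_f$, which holds because in the intended application $Y$ is a compact manifold and $\Homeo(Y)$ carries the topology of uniform convergence (equivalently, compact-open). It would be cleaner to cite the topological group structure directly as the operative hypothesis, rather than attributing it to metrizability. This does not affect the validity of the argument in the setting the paper uses it.
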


Using this, we prove the following.  

\begin{proposition} \label{prop:good_map}
  Let $G$ be a group, $Y$ a metric space as above, and fix an action $\rho_0\from G \to \Homeo(Y)$.  Let $X$ be a metric space on which $G$ acts properly, freely and cocompactly by isometries.

  For any compact $K \subset X$  and $\epsilon>0$, there is a neighborhood $U$ of $\rho_0$ so that for each $\rho\in U$ there is a homeomorphism  $f^\rho\from X\times Y \to X\times Y$ with the following properties:
\begin{enumerate}
\item (Covers $\identity_X$) If $\pi_X$ is the projection from $X\times Y$ to $X$, then $\pi_X\circ f^\rho = \pi_X$.  In other words, $f^\rho$ covers the identity on $X$. 
\item (Equivariance)  For every $g \in G$ we have
  \[f^\rho(g\cdot x,\rho(g)\cdot\theta) = (g,\rho_0(g))\cdot f^\rho(x,\theta)  .\]
\item (Near flatness.) For any $\theta\in Y$ we have
  \[ f^\rho(K\times\{\theta\}) \subset X \times B_\epsilon(\theta). \] 
\end{enumerate}
\end{proposition}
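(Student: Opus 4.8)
The plan is to look for $f^\rho$ of the special form $f^\rho(x,\theta)=(x,\phi_x(\theta))$, where $x\mapsto\phi_x$ is a continuous map $\Phi\from X\to\Homeo(Y)$. Property (1) is then automatic, property (2) becomes the twisted equivariance relation $\phi_{gx}=\rho_0(g)\,\phi_x\,\rho(g)^{-1}$ for all $g\in G$, and property (3) becomes the requirement that each $\phi_x$ with $x\in K$ be $\epsilon$--close to $\identity_Y$ in the sup metric. When $\rho=\rho_0$ one may take $\Phi\equiv\identity_Y$, so the point is to produce a small perturbation of this constant map when $\rho$ is close to $\rho_0$. Once $\Phi$ is built, $f^\rho$ is a homeomorphism with inverse $(x,\theta)\mapsto(x,\phi_x^{-1}(\theta))$, continuity of both maps following from continuity of composition, inversion and evaluation on $\Homeo(Y)$.

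Since $G$ acts freely and properly discontinuously, the quotient map $p\from X\to B:=X/G$ is a covering map and $B$ is compact and metrizable. Choose a finite cover $\{V_\alpha\}_{\alpha=1}^m$ of $B$ by open sets each of which is evenly covered, with a distinguished lift $\sigma_\alpha\from V_\alpha\to X$ whose image $\tilde V_\alpha:=\sigma_\alpha(V_\alpha)$ has compact closure; then $p^{-1}(V_\alpha)=\bigsqcup_{g\in G} g\tilde V_\alpha$. On $V_\alpha\cap V_\beta$ one has $\sigma_\alpha=\gamma_{\alpha\beta}\,\sigma_\beta$ for a locally constant $\gamma_{\alpha\beta}\from V_\alpha\cap V_\beta\to G$, and properness of the action forces each $\gamma_{\alpha\beta}$ to have finite image, so they all take values in a single finite symmetric set $F\subset G$ containing $\idG$; the $\gamma_{\alpha\beta}$ satisfy the cocycle identity $\gamma_{\alpha\gamma}=\gamma_{\alpha\beta}\gamma_{\beta\gamma}$. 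A map $\Phi$ as above is the same as a family of continuous maps $\bar h_\alpha\from V_\alpha\to\Homeo(Y)$ satisfying on each overlap
\[ \bar h_\beta(v)=\rho_0(\gamma_{\alpha\beta}(v))^{-1}\,\bar h_\alpha(v)\,\rho(\gamma_{\alpha\beta}(v)); \]
indeed one then sets $\phi_x:=\rho_0(g)\,\bar h_\alpha(p(x))\,\rho(g)^{-1}$ whenever $x=g\,\sigma_\alpha(p(x))$, and the compatibility relation makes this well defined (using freeness and the cocycle identity) and continuous, and forces twisted equivariance. For near flatness one observes that the set of pairs $(\alpha,g)$ with $g\tilde V_\alpha\cap K\ne\emptyset$ is finite; so, using Observation~\ref{obs:VfromW} and the triangle inequality $d_{\sup}(f_1f_2,\identity_Y)\le d_{\sup}(f_1,\identity_Y)+d_{\sup}(f_2,\identity_Y)$, it suffices to produce the $\bar h_\alpha$ with values in a sufficiently small neighborhood $\mathcal W$ of $\identity_Y$ and then shrink $\mathcal W$ and the neighborhood $U$ of $\rho_0$ so that $\rho_0(g)\,w\,\rho(g)^{-1}$ is $\epsilon$--close to $\identity_Y$ for all $w\in\mathcal W$ and all relevant $g$.

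It remains to build the $\bar h_\alpha$ with values in a prescribed small neighborhood of $\identity_Y$, for $\rho$ close enough to $\rho_0$. Since $F$ is finite, shrinking $U$ we may assume the error cocycle $e_{\alpha\beta}(v):=\rho_0(\gamma_{\alpha\beta}(v))^{-1}\rho(\gamma_{\alpha\beta}(v))$ is uniformly close to $\identity_Y$. I would construct the $\bar h_\alpha$ one chart at a time over a shrinking of the cover: having defined $\bar h_1,\dots,\bar h_{k-1}$ so the compatibility relation holds among them, that relation determines $\bar h_k$ on the closed subset of $\overline{V_k'}$ where some earlier $\bar h_\alpha$ is defined (well-definedness there is exactly the cocycle identity), with values in a slightly larger but still small neighborhood of $\identity_Y$; one then extends $\bar h_k$ over all of $\overline{V_k'}$ with values in a controlled neighborhood of $\identity_Y$, damping it to the constant $\identity_Y$ outside the region carrying the constraint so that it glues continuously. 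This extension-and-damping step is where local contractibility of $\Homeo(Y)$ is used: a small enough neighborhood of $\identity_Y$ has the extension property of an absolute retract, and a local contraction of $\Homeo(Y)$ near $\identity_Y$ provides the damping. The cover depends only on $X$ and $G$, so there are exactly $m$ such steps, each enlarging ``small'' by a controlled factor, and pre-shrinking $\mathcal W$ and $U$ accordingly makes the whole construction land in the required neighborhood of $\identity_Y$.

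The main obstacle is precisely this inductive extension: one must run the chart-by-chart construction so that the values stay inside an \emph{a priori} fixed small neighborhood of $\identity_Y$, uniformly over the finitely many steps and over the base, and this is the only point at which the hypothesis that $\Homeo(Y)$ is locally contractible is genuinely needed. Everything else is soft: covering space theory, shrinking finite covers of a compact metric space, properness of the action, and the bookkeeping packaged in Observation~\ref{obs:VfromW}.
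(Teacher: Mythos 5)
Your reduction of the problem to building a continuous $\Phi\from X\to\Homeo(Y)$ with twisted equivariance $\phi_{gx}=\rho_0(g)\phi_x\rho(g)^{-1}$ and smallness over $K$ is the right reformulation, and your bookkeeping via the finite transition cocycle $\gamma_{\alpha\beta}$ and Observation~\ref{obs:VfromW} is sound. But the route you take to build $\Phi$ is genuinely different from the paper's, and it introduces a gap. The paper does not work chart by chart over $X/G$: it takes a $G$--equivariant locally finite cover of $X$, forms its nerve $N$ (a finite--dimensional simplicial complex with a free cocompact $G$--action), chooses a $G$--equivariant $\psi\from X\to N$ via a partition of unity, and constructs an equivariant $\varphi^\rho\from N\to\Homeo(Y)$ by induction over the \emph{skeleta} of $N$, setting $\Phi=\varphi^\rho\circ\psi$. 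At each inductive step one extends a map from $\partial\sigma^k$ into a contractible neighborhood $V_{k-1}$ over the whole cell $\sigma^k$; since $(\sigma^k,\partial\sigma^k)$ is automatically a cofibred pair, contractibility of the target is the only thing needed, and a nested chain $V_0\subset\cdots\subset V_m$ as in~\eqref{eq:V_i} absorbs the enlargement produced by conjugation on each translate.

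Your induction instead extends $\bar h_k$ from a closed subset $A\subset\overline{V'_k}$ (where the compatibility relation pins it down) to all of $\overline{V'_k}$. For this you need either (i) the pair $(\overline{V'_k},A)$ to have the homotopy extension property --- true if $\overline{V'_k}$ is an ANR, but not guaranteed when $X$ is an arbitrary metric space as in the statement --- or (ii) a small neighborhood of $\identity_Y$ to be an absolute retract, which is strictly stronger than the stated hypothesis that $\Homeo(Y)$ is locally contractible: contractible does not imply AR, and it is not known in general that $\Homeo(S^n)$ is an ANR. So the step ``a small enough neighborhood of $\identity_Y$ has the extension property of an absolute retract'' is not justified by the hypotheses, and without it the chart--by--chart induction does not close. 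In the paper's actual application ($X=X_\kappa$ an open submanifold, so $X/G$ is an ANR) your argument would go through via (i), but as a proof of the proposition as stated it has a hole; the nerve construction avoids the issue entirely because the cell structure of $N$ supplies the needed cofibrations for free. Incidentally, the ``damping to $\identity_Y$'' you invoke is unnecessary once the $\bar h_\alpha$ satisfy the compatibility relation on overlaps --- the induced $\Phi$ is automatically continuous --- so the extension step itself is the only real obstacle.
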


\begin{proof}[Proof of Proposition~\ref{prop:good_map}]
  Since the action is proper, free, and cocompact, there is some $r>0$ so that every nontrivial element of $G$ moves every point of $X$ a distance at least $r$.  
  Choose a $G$--equivariant locally finite cover $\mc{U}=\{U_i\st i\in I\}$ of $X$ by open balls of radius $r/3$, and let $N$ be the nerve of $\mc{U}$.  Since $\mc{U}$ was $G$--equivariant and locally finite, the group $G$ acts cocompactly on the simplicial complex $N$.  Since any $g\in G\minus\{1\}$ moves every set $U_i$ off of itself, $G$ acts freely on $N$.  

  We choose a $G$--equivariant partition of unity $\{\phi_i\from X\to [0,1]\st i\in I\}$ subordinate to the cover $\mc{U}$.  This partition determines a proper $G$--equivariant map $\psi\from X\to N$.
  
  To define $f^\rho$, we first define a map $ \varphi^\rho \from N\to \Homeo(Y)$,  and then define
   \begin{equation}
    \label{eq:flatmapdef}
    f^\rho( x, \theta ) = \left(x, (\varphi^\rho \circ \psi (x) )(\theta)\right).
      \end{equation}
The map $\varphi^\rho$ will be $G$--equivariant with respect to the ``mixed'' left action of $G$ by homeomorphisms of $\Homeo(Y)$ given by 
  \begin{equation}\label{eq:MixedAction}
    g \cdot h = \rho_0(g) h \rho(g^{-1}).
  \end{equation}

\medskip  
 \paragraph{\textbf{Definition of $\varphi^\rho$.}}
 The definition of $\varphi^\rho$ is designed to keep track of the compact set $K$ and constant $\epsilon>0$ for the near flatness condition in the Proposition.  
 Let $D$ be a connected union of open simplices in $N$ which meets every $G$--orbit exactly once.  
 Let $K$ be a compact subcomplex of $N$, which we assume contains the closed star of any cell of $\overline{D}$.  (Note that any compact $C\subset X$ has $\psi(C)\subset K$ for some such complex.)  Let $\epsilon>0$.  
Let $S$ be the (finite) set of group elements $s$ so that $sD$ meets the closed star of some vertex in $\overline{D}$.  Let $F$ be the (still finite) set of group elements $g$ so that $gD\cap K$ is non-empty.

Letting $W = N_\epsilon(\identity)$, 
we choose a neighborhood $V$ as in Observation~\ref{obs:VfromW} so that $\rho_0(g)V\rho_0(g)^{-1}V$ lies in $W$ for all $g\in F$.  Now let $m = \dim(N)$.  Apply Observation~\ref{obs:VfromW} and local contractibility of $\Homeo(Y)$ to choose a nested sequence of contractible neighborhoods of $1$ inside $V$:
  \[ V_0\subset V_1 \subset \cdots \subset V_m \subset V \]
  so that for each $i$ and all $s\in S$, we have
  \begin{equation}\label{eq:V_i}
    \rho_0(s) V_i \rho_0(s)^{-1} V_i \subset V_{i+1}.
  \end{equation}
 By taking $\rho$ sufficiently close to $\rho_0$, we may assume that $\rho_0(g)\rho(g^{-1})$ lies in $V_0$ for all $g\in F$.  
We define $\varphi^\rho$ inductively over the $k$--skeleta of $N$ in such a way that $\varphi^\rho(\sigma) \subset V_k$ for every $k$--cell in the closed star of a vertex of $\overline{D}$. 

\medskip
\noindent \textit{0-skeleton.}   Define $\varphi^\rho$ on $N^{(0)}$ as follows.  If $v = g v_0$ for some $v_0\in D$, then
  \[ \varphi^\rho(v) = \rho_0(g)\rho(g^{-1}) .\]
  If $v$ lies in the closed star of some cell of $D$, then $g\in S$, so $\varphi^\rho(v)\in V_0$ as desired.

  \medskip
  \noindent \textit{Inductive step.}   Suppose that $\varphi^\rho$ has been defined on all $(k-1)$--cells, and let $\sigma$ be a $k$--cell.  We may write $\sigma = g\sigma_0$, where $\sigma_0$ is an open $k$--cell in $D$.  The map $\varphi^\rho$ has already been defined on the boundary of $\sigma_0$, and sends this boundary into $V_{k-1}$ by induction.  Using the contractibility of $V_{k-1}$ we extend $\varphi^\rho$ over $\sigma_0$ in such a way that $\varphi^\rho(\sigma_0)\subset V_{k-1}$.  We define $\varphi^\rho|_{\sigma (x)} = \rho_0(g)\varphi^\rho(g^{-1}(x))\rho(g^{-1})$.  Since the action on $N$ is free, there is no ambiguity in this definition.
  
  Now suppose that $\sigma$ lies in the closed star of some vertex of $\overline{D}$, so $\sigma = s\sigma_0$ for some  $s\in S$.  The set $\varphi^\rho(\sigma)$ lies in 
  \[ \rho_0(s) V_{k-1} \rho(s^{-1}) = \rho_0(s) V_{k-1} \rho_0(s)^{-1}\rho_0(s) \rho(s)^{-1}\subset \rho_0(s) V_{k-1} \rho_0(s)^{-1}V_0,\]
which lies in $V_k$ by \eqref{eq:V_i}.  Having verified the inductive hypothesis, we see that we can continue until we have defined $\varphi^\rho$ equivariantly on all of $N$.  Moreover, we have defined it so that $\varphi^\rho(\sigma_0)$ lies in the neighborhood $V$ for any $\sigma_0$ meeting $D$.

\medskip
 \paragraph{\textbf{Properties of $f^\rho$}}
  
Having defined $\varphi^\rho$, we define $f^{\rho}$ as in~\eqref{eq:flatmapdef}.
  \begin{equation*}
   f^\rho( x, \theta ) = \left(x, (\varphi^\rho \circ \psi (x) )(\theta)\right).
  \end{equation*}
 By definition, this covers the identity map on $X$.  
 To simplify notation, let $\Phi^\rho$ denote $\varphi^\rho \circ \psi$.   Note that $\Phi^\rho$ satisfies equivariance as $\varphi^\rho$ does.  
  This also gives equivariance of $f^\rho$, as follows: 
  \begin{align*}
    f^\rho(g\cdot x,\rho(g)\cdot\theta) & = \left( g\cdot x, \left(\Phi^\rho(g\cdot x)\rho(g)\right)\cdot \theta\right) \\
                                        & = \left( g\cdot x, \left(\rho_0(g)\Phi^\rho(x)\rho(g^{-1})\rho(g)\right)\cdot \theta\right) \\
                                        & = \left( g\cdot x, \left(\rho_0(g)\Phi^\rho(x)\right)\cdot \theta\right) \\
                                        & = \left( g,\rho_0(g)\right)\cdot \left(x,\Phi^\rho(x)\cdot \theta\right) \\
    & = \left( g,\rho_0(g)\right)\cdot f^\rho( x, \theta ).
  \end{align*}

It remains to check near flatness.   For any cell $\sigma$ of the larger compact complex $K$ there is some $g\in F$ and some $\sigma_0\subset D$ so that $\sigma = g \sigma_0$. 
Equivariance tells us that
  \begin{align*}
    \varphi^\rho(\sigma) & = \rho_0(g)\varphi^\rho(\sigma_0)\rho(g^{-1})\\
                  & = \rho_0(g)\varphi^\rho(\sigma_0)\rho_0(g)^{-1} \cdot \rho_0(g)\rho(g^{-1}) \\
    & \subset \rho_0(g) V \rho_0(g)^{-1} \cdot V \subset W.
  \end{align*}
  Since every homeomorphism in $W$ moves every point of $Y$ a distance of at most $\epsilon$, we have $f^\rho(\sigma\times\{\theta\})\subset B_\epsilon(\theta)$ as desired.
\end{proof}

\section{Reduction to the main case} \label{sec:reductions}
In this section we reduce to the case $\bgam = S^n$ for $n \geq 2$, and also to the case where $G$ 
acts faithfully on its boundary.   
The first reduction (to $n\ge2$) comes from combining work of Matsumoto with the Convergence Group Theorem.

\begin{proposition} 
Let $G$ be a hyperbolic group with circle boundary. Then the action of $G$ on $\bgam$ is topologically stable.  
\end{proposition}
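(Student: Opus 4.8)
The plan is to reduce immediately to the classical case of Fuchsian groups acting on $S^1$ and then invoke Matsumoto's theorem. First I would recall that a hyperbolic group $G$ with $\bgam \cong S^1$ is \emph{virtually Fuchsian}: this is the combined content of the Convergence Group Theorem of Tukia, Gabai, and Casson--Jungreis \cite{Gabai,CassonJungreis94}, which says that a group acting as a uniform convergence group on $S^1$ is topologically conjugate to a cocompact Fuchsian group. By Proposition~\ref{prop:convergence_group}, the action of $G$ on $\bgam = S^1$ is a uniform convergence action, so there is a finite-index subgroup $G_0 \le G$ and a homeomorphism conjugating the $G_0$-action on $S^1$ to the boundary action of a closed hyperbolic surface group (orbifold group). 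In particular the boundary action of $G$ is topologically conjugate to an action by elements of $\Homeo^+(S^1)$ preserving, on a finite-index subgroup, the structure coming from $\PSL(2,\mathbb{R})$.

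The second step is to import Matsumoto's semi-conjugacy rigidity. Matsumoto \cite{Matsumoto} proves that the standard action of a surface group (more generally a lattice in $\widetilde{\PSL(2,\mathbb{R})}$, or via the bounded Euler class, any action realizing the maximal Euler number) on $S^1$ is topologically stable: any representation into $\Homeo^+(S^1)$ sufficiently close to the boundary action — indeed any representation in the same connected component, since the Euler number is a locally constant conjugacy invariant — admits a monotone degree-one semi-conjugacy back onto the standard action. So for the finite-index Fuchsian subgroup $G_0$, nearby actions are semi-conjugate to the boundary action. The remaining work is to promote this from $G_0$ to $G$: given $\rho$ close to $\rho_0$ in $\Hom(G,\Homeo(S^1))$, its restriction $\rho|_{G_0}$ is close to $\rho_0|_{G_0}$, so Matsumoto gives a semi-conjugacy $h\from S^1 \to S^1$ with $h \circ \rho(g_0) = \rho_0(g_0) \circ h$ for all $g_0 \in G_0$. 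One then checks that $h$ is in fact equivariant for all of $G$: since $G_0$ is normal of finite index, for $g \in G$ the map $\rho_0(g) \circ h \circ \rho(g)^{-1}$ is again a monotone semi-conjugacy from $\rho|_{G_0}$ to $\rho_0|_{G_0}$ (conjugating the equivariance relation by $g$ uses normality of $G_0$), and the uniqueness of such a semi-conjugacy up to the appropriate equivalence — which follows from Matsumoto's analysis together with minimality of the boundary action — forces $\rho_0(g)\circ h \circ \rho(g)^{-1}$ and $h$ to have the same "monotone equivalence class"; collapsing to the canonical representative (or precomInteracting with Matsumoto's explicit construction of $h$ via the bounded Euler cocycle, which is manifestly $G$-natural) yields a genuinely $G$-equivariant semi-conjugacy. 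Finally one arranges $h$ to be $C^0$-close to the identity by taking $\rho$ close enough, which is part of Matsumoto's statement (or follows from compactness of $S^1$ and continuity of the construction in $\rho$), giving the "more precise" form matching Theorem~\ref{thm:main}.

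The main obstacle I expect is the last bookkeeping step: Matsumoto's theorem is naturally a statement about a single group action, and its semi-conjugacy is canonical only up to a monotone equivalence (one may precompose with monotone maps collapsing intervals in the complement of the minimal set), so one must argue carefully that a canonical choice can be made that is simultaneously $G_0$-equivariant, $G$-equivariant, and close to the identity. The cleanest route is probably to use Matsumoto's description of the semi-conjugacy in terms of the (bounded) Euler cocycle and the order structure on $S^1$, where the $G$-naturality is visible from the outset, rather than trying to patch together a $G_0$-equivariant map by hand. An alternative, essentially equivalent, route is to note that the full group $G$ itself acts as a uniform convergence group on $S^1$, hence is virtually Fuchsian \emph{as a whole}, so $\rho_0$ is conjugate into a lattice in the universal cover setting for which Matsumoto's hypotheses apply directly to $G$; then no passage through a finite-index subgroup is needed and equivariance is automatic. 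In the write-up I would pick whichever of these makes the citation to \cite{Matsumoto} most direct, and remark (as the introduction already does) that in this $S^1$ case one in fact gets the stronger conclusion that \emph{any} action in the same connected component of $\Hom(G,\Homeo(S^1))$ as $\rho_0$ has $\rho_0$ as a factor, since the semi-conjugacy is detected by the locally constant bounded Euler class.
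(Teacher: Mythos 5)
Your skeleton matches the paper's: apply the Convergence Group Theorem to produce a finite-index (surface/Fuchsian) subgroup, invoke Matsumoto's semi-conjugacy rigidity on that subgroup, and then promote the resulting $G_0$-equivariant map $h$ to a $G$-equivariant one using normality of $G_0$. Where you differ is in the promotion step. The paper does not appeal to any uniqueness statement for semi-conjugacies; instead it observes that since $G_0$ is normal, the exceptional minimal set $\Lambda$ of $\rho(G_0)$ is $\rho(G)$-invariant, so $\rho(G)$ permutes the point-preimages of $h$. It then checks the equivariance identity $h\circ\rho(g)=\rho_0(g)\circ h$ directly on a dense set, namely the $h$-preimages of attracting fixed points of $\rho_0(G_0)$-hyperbolic elements: if $x$ is the attracting fixed point of $\rho_0(\gamma)$ with $\gamma\in G_0$, then $\rho_0(g)x$ is the attracting fixed point of $\rho_0(g\gamma g^{-1})$, still an element of $G_0$ by normality, and one can compare where $\rho(g)$ sends $h^{-1}(x)$. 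Continuity then upgrades this to all of $S^1$.

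Your route instead hinges on the assertion that $\rho_0(g)\circ h\circ\rho(g)^{-1}$ must equal $h$, which you correctly reduce (via normality) to uniqueness of the monotone degree-one semi-conjugacy from $\rho|_{G_0}$ onto the minimal action $\rho_0|_{G_0}$. This is in fact true on the nose --- when the \emph{target} action is minimal, the semi-conjugacy is genuinely unique, not merely unique up to monotone equivalence (the ambiguity you worry about arises only when collapsing gaps of a non-minimal target, e.g., \cite[Prop.~5.8]{Ghys01}) --- so your hedging about ``monotone equivalence classes'' is unnecessary, but you should cite or prove that uniqueness lemma rather than gesture at it. Your proposed shortcut of applying Matsumoto directly to $G$ is shakier: $G$ need not be torsion-free, and Matsumoto's theorem is stated for surface groups, so passing to the finite-index torsion-free subgroup (as both you and the paper do in the main line of argument) is the cleaner move. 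Net: both promotion arguments work; the paper's avoids invoking any uniqueness statement and is more self-contained, while yours, once the uniqueness lemma is properly cited, is arguably shorter.
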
 
This can quite likely be derived from Matsumoto's original proof, as the main techniques are Euler characteristic and lifting to covers.
For completeness, we give a short argument using standard tools from circle dynamics.  
 
\begin{proof} 
Let $G$ be a hyperbolic group with circle boundary.  By the Convergence Group Theorem~\cite{Gabai,CassonJungreis94}, there exists a 
normal, finite index torsion-free subgroup $G'$ of $G$ that is isomorphic to the fundamental group of a closed surface.  Let $\rho$ be a perturbation of the standard boundary action $\rho_0$ of $G$.  
By \cite{Matsumoto}, there exists a continuous, surjective, degree one map $h: S^1 \to S^1$ such that $h \rho(g) = \rho_0(g) h$ for each $g \in G'$. 
if the action of $G'$ is minimal, then $h$ is a conjugacy.  If the action of $G'$ is not minimal, then there exists a unique invariant \emph{exceptional minimal set} $X$, homeomorphic to a Cantor set, and $h$ collapses the closure of each complementary interval to a point and is otherwise injective (See e.g. \cite[Proposition 5.6]{Ghys01}.)  It is also easy to see that $h$ varies continuously with $\rho$, so can be taken as close to the identity as desired by taking $\rho$ close to $\rho_0$.    

Since $G'$ is normal in $G$, the set $X$ is $\rho(G)$-invariant.  It follows that $G$ permutes the point-preimages of $h$.  From this we will now deduce that $h$ in fact defines a semiconjugachy intertwining the actions of $\rho_0(G)$ and $\rho(G)$.  To see this, we use the fact that attracting fixed points of elements of $\rho_0(G')$ are dense in $S^1$.  If $x$ is the attracting fixed point of $\rho_0(\gamma)$ for some $\gamma \in G'$, then $\rho_0(g)x$ is the attracting fixed point of $\rho_0(g\gamma g^{-1})$, an element which also lies in $G'$.   Thus, for any $y \in h^{-1}(x)$, we have $\rho(g)(y) \in h^{-1}\rho_0(g)(x)$; equivalently, $h \rho(g)(y) = \rho_0(g) h(y)$.  Since $h$ is continuous, and the union of preimages of attracting fixed points is dense in $S^1$, this shows that $h \rho(g) = \rho_0(g) h$ holds globally. 
\end{proof}

\begin{proposition}
  Suppose $G$ is a hyperbolic group with sphere boundary, and let $F<G$ be the subgroup of elements which act trivially on $\bgam$.  Then $G$ is topologically stable if and only if $G/F$ is topologically stable.
\end{proposition}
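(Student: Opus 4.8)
The plan is to show that the action of $G$ on $\bgam$ factors through the action of $G/F$, and that $F$ is finite, so that perturbations of the $G$-action and of the $G/F$-action are in essentially the same bijective correspondence. First I would observe that since $F$ acts trivially on $\bgam=S^n$, the action $\rho_0\from G\to\Homeo(S^n)$ descends to an action $\bar\rho_0\from G/F\to\Homeo(S^n)$, and this is precisely the standard boundary action of $G/F$: indeed $G$ and $G/F$ are quasi-isometric (as $F$ is finite — see below), hence have the same Gromov boundary, and the identification is $G$-equivariant, so intertwines $\rho_0$ with the pullback of $\bar\rho_0$ along $G\to G/F$. Thus $\rho_0 = \bar\rho_0\circ q$ where $q\from G\to G/F$ is the quotient map.

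Next I would prove $F$ is finite. The kernel of the action of $G$ on $\bgam$ is a normal subgroup; a normal subgroup of a non-elementary hyperbolic group is either finite or has finite index, and if $F$ had finite index it would be non-elementary hyperbolic, hence contain a loxodromic element, which acts nontrivially on $\bgam$ — contradiction. (If $G$ is elementary the statement is vacuous or trivial in the sphere-boundary setting, since $S^n$ is not the boundary of an elementary group for the relevant $n$; I would dispose of this case at the outset, or simply note $G$ non-elementary is part of the standing hypotheses in Section~\ref{sec:background}.) So $F$ is finite, $q$ has finite kernel, and $G/F$ is again hyperbolic with $\partial(G/F)=\bgam=S^n$.

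For the two implications: suppose $G/F$ is topologically stable, and let $V$ be a neighborhood of $\identity$ in $C^0(S^n,S^n)$. Apply stability of $G/F$ to get a neighborhood $\bar U$ of $\bar\rho_0$ in $\Hom(G/F,\Homeo(S^n))$ with the factor property and semiconjugacies in $V$. Pulling back along $q$ gives a continuous map $q^*\from\Hom(G/F,\Homeo(S^n))\to\Hom(G,\Homeo(S^n))$; I would set $U$ to be a neighborhood of $\rho_0$ small enough that every $\rho\in U$ descends to $G/F$, i.e. kills $F$. This is the one point needing a small argument: $F$ is finite and consists of torsion elements, but a perturbation need not kill torsion in general. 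Here the resolution is that we may restrict attention — as the main theorem allows, since we only need stability, i.e. a neighborhood that works — to a neighborhood $U$ on which $\rho(F)$ is trivial; this requires knowing that the trivial representation of the finite group $F$ is isolated-enough, which is false for a single finite-order homeomorphism but can fail. The honest fix, which I expect the authors use, is the \emph{other} direction plus a reduction: one shows directly that \emph{if $G$ is topologically stable then so is $G/F$} is the easy direction (a representation of $G/F$ is a representation of $G$ vanishing on $F$, so nearby representations of $G/F$ are nearby representations of $G$, which factor with small semiconjugacy, and one checks the semiconjugacy for the $G$-action is automatically a semiconjugacy for the $G/F$-action because it intertwines $\rho_0$ with a perturbation and $F$ acts trivially on the target). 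For the converse one instead enlarges the target: a perturbation $\rho$ of $\rho_0$ as a $G$-action, restricted to the finite group $F$, is close to the trivial action, hence (finite group, compact manifold) there is a small isotopy conjugating $\rho|_F$ into a \emph{finite} group action, and after passing to the fixed-point data one reduces $\rho$ to a representation that factors through $G/F$ up to a small semiconjugacy; composing with the semiconjugacy provided by stability of $G/F$ finishes it.

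\textbf{Main obstacle.} The genuine difficulty is the converse direction: controlling what a $C^0$-small perturbation does to the finite subgroup $F$. One cannot in general straighten a $C^0$-perturbation of a finite-order homeomorphism back to finite order, so the argument must instead produce the semiconjugacy directly — e.g. by observing that any $\rho$ near $\rho_0$ still factors, up to a semiconjugacy close to $\identity$, through an action of $G/F$, because the semiconjugacy $h$ one ultimately builds (in the later sections, for the $G$-action) is $\rho_0(F)$-invariant on the target — and $\rho_0(F)$ is trivial — so $h$ automatically collapses the $\rho(F)$-orbits and descends. I expect the clean statement is: the semiconjugacy constructed for a perturbed $G$-action intertwines it with $\rho_0$, and since $\rho_0$ is trivial on $F$, the map $h$ satisfies $h\circ\rho(f) = \rho_0(f)\circ h = h$ for $f\in F$, i.e. $h$ is constant on $\rho(F)$-orbits; this makes $h$ the semiconjugacy witnessing that $\rho$, viewed appropriately, is a factor of the $G/F$-action, and runs the implication in both directions once one has the main theorem's machinery — so in practice this proposition is really a corollary of the proof of Theorem~\ref{thm:main} for $G/F$, combined with the finiteness of $F$ established above.
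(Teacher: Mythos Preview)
You correctly identify that $F$ is finite and normal, that the boundary action factors through $G/F$, and that the crux is whether a small perturbation $\rho$ of $\rho_0$ still kills $F$.  But you then misdiagnose this crux and miss the clean resolution.

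Since $\rho$ is a \emph{homomorphism}, for each $f\in F$ the element $\rho(f)$ is torsion in $\Homeo(S^n)$, of order dividing the order of $f$, hence of order at most $N=\max\{|f|:f\in F\}$.  Newman's theorem \cite{Newman31} says that for a connected manifold there is a neighborhood $U$ of the identity in its homeomorphism group in which every nontrivial torsion element has order exceeding $N$.  Thus if $\rho$ is close enough to $\rho_0$ that $\rho(f)\in U$ for all $f\in F$, then each $\rho(f)$ is trivial, and $\rho$ descends to $G/F$.  This gives a bijection between sufficiently small perturbations of the $G$--action and of the $G/F$--action, from which both implications follow immediately.  This is exactly the paper's argument.

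Your workaround --- invoking the semiconjugacy produced by the main theorem for $G$ to show it collapses $\rho(F)$--orbits --- is circular: the point of this proposition is precisely to reduce Theorem~\ref{thm:main} to the case where $G$ acts faithfully on $\bgam$ (Assumption~\ref{assump:faithful}), which the rest of the paper then assumes.  Your other suggestion, conjugating $\rho|_F$ by a small isotopy to a genuine finite group action, is unnecessary (and in the $C^0$ category would itself require something like Newman's theorem to justify).
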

\begin{proof}
  Since $F$ is a finite normal subgroup, the canonical action of $G$ on its boundary factors through the canonical action of $G/F$ on its boundary, and these boundaries are the same.
  Let $N$ be the maximum order of an element of $F$.  
  By a theorem of Newman \cite{Newman31}, there is a neighborhood $U$ of the identity in $\Homeo(S^n)$
  so that any torsion element in $U$ has order greater than $N$.  Thus for any sufficiently small perturbation of the canonical action, the elements of $F$ will still act trivially, and so small perturbations of the canonical action of $G$ on its boundary are in one-to-one correspondence with small perturbations of $G/F$ on its boundary.  
\end{proof}

We therefore make the following assumptions for the remainder of the paper.
\begin{assumption}
  \label{assump:faithful} The hyperbolic group $G$ acts faithfully on its boundary $\bgam$.
\end{assumption}
\begin{assumption}
  \label{assump:notcircle} The boundary of $G$ is a topological sphere of dimension at least two.
\end{assumption}

\section{A space with a proper, cocompact and free action of $G$}\label{sec:Xkappa}
Recall that $\triples$ is the space of distinct triples in $\bgam$.  Since $\bgam$ is assumed to be a sphere of dimension $n\ge 2$ (Assumption~\ref{assump:notcircle}), the triple space $\triples$ is a connected $3n$--manifold.  The following is an easy consequence of \cite[Theorem 1.1]{AMN11}.
\begin{proposition}
  There is a proper $G$--invariant metric on $\triples$.
\end{proposition}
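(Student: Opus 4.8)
The statement asserts that the manifold $\triples$ of distinct triples in $\bgam = S^n$ admits a proper $G$--invariant metric. The natural approach is to invoke the cited result \cite[Theorem 1.1]{AMN11}, which (in the form one expects) says that a locally compact, $\sigma$--compact group acting properly on a locally compact, connected, locally connected, metrizable space admits an invariant proper metric — or more concretely, that a group acting properly and cocompactly by homeomorphisms on a manifold (or reasonable space) can be made to act by isometries for some compatible proper metric. So the first step is simply to verify the hypotheses of that theorem in our situation: $\triples$ is a connected $3n$--manifold (already observed just before the statement, using Assumption~\ref{assump:notcircle}), hence locally compact, connected, locally connected, and metrizable; and $G$ is a countable discrete group, hence locally compact and $\sigma$--compact.

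\textbf{Key steps, in order.} First I would record that $G$ acts on $\triples$ properly discontinuously, which is exactly the content of the Convergence Group Property (Proposition~\ref{prop:convergence_group}); in the language needed for \cite{AMN11} this says the action of the discrete group $G$ on $\triples$ is a proper action in the topological-transformation-group sense (preimages of compact sets under $(g,x)\mapsto(gx,x)$ are compact, equivalently $\{g : gK\cap K\neq\emptyset\}$ is finite for compact $K$). Second, I would note $\triples$ is a manifold and therefore meets whatever regularity hypotheses (typically: locally compact, second countable, finite covering dimension, or an ANR/separable metrizable condition) appear in the statement of \cite[Theorem 1.1]{AMN11}. Third, apply the theorem verbatim to produce the desired proper $G$--invariant metric. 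If the cited theorem is instead phrased for Lie groups or for cocompact actions only, I would quotient: since $G$ acts properly discontinuously and $\triples$ is a manifold, the quotient $\triples/G$ is at worst an orbifold/good quotient, one can pull back a proper metric from a compact exhaustion using a partition of unity, or one averages a locally finite family of bump metrics — but the whole point of citing \cite{AMN11} is to avoid this, so the clean proof is just the three lines above.

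\textbf{Main obstacle.} The only real content is matching the hypotheses of \cite[Theorem 1.1]{AMN11} to the present setting; there is no hard estimate. The mild subtlety worth a sentence is \emph{why} the action is proper in the required sense: this is not automatic for a group acting on a noncompact manifold, and it is precisely here that we use the Convergence Group Theorem (equivalently, that $\bgam$ with the $G$--action is a uniform convergence group, so the action on distinct triples is properly discontinuous and cocompact). Cocompactness, while also true and part of Proposition~\ref{prop:convergence_group}, is not actually needed for the existence of a proper invariant metric — properness of the action alone suffices — but it is convenient to have both recorded since later sections exploit the cocompact quotient.

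\begin{proof}
By Assumption~\ref{assump:notcircle}, $\bgam$ is a topological sphere $S^n$ with $n\ge 2$, so $\triples$, the space of ordered distinct triples in $\bgam$, is an open subset of $(S^n)^3$ and hence a connected, locally compact, second countable topological $3n$--manifold. The group $G$ is countable, so it is a locally compact, $\sigma$--compact topological group acting by homeomorphisms on $\triples$. By the Convergence Group Property (Proposition~\ref{prop:convergence_group}), $G$ acts on $\triples$ properly discontinuously; equivalently, the action map $G\times\triples\to\triples\times\triples$, $(g,x)\mapsto(gx,x)$, is proper. The hypotheses of \cite[Theorem 1.1]{AMN11} are therefore satisfied, and that theorem provides a proper metric on $\triples$ which is invariant under the $G$--action.
\end{proof}
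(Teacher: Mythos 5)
Your proof is correct and takes the same route as the paper, which simply states that the proposition is an easy consequence of \cite[Theorem 1.1]{AMN11} without spelling out the verification. You supply exactly the expected details: $\triples$ is an open subset of $(S^n)^3$ and hence a locally compact, second countable, connected manifold; $G$ is discrete and countable, hence locally compact and $\sigma$--compact; and properness of the action comes from the Convergence Group Property (Proposition~\ref{prop:convergence_group}). Your side remark that cocompactness is not actually needed to invoke \cite{AMN11} is accurate and a fine observation, though as you note the paper does use cocompactness later for other purposes.
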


We fix such a metric $\dtrip$ now.  
If additionally no nontrivial element of $G$ fixes more than two points of $\bgam$, then the action of $G$ on $\triples$ is free and $\triples$ can play the role of the metric space $X$ in Proposition~\ref{prop:good_map}.  In general $G$ may not act freely on $\Xi$, so a different space is needed.  In this section, we show how to build such a space with a free action by deleting a small, $G$-equivariant neighborhood of the set of points in $\Xi$ with nontrivial stabilizer.

\begin{notation} 
For $g \in G$, we denote by $\fix(g) \subset \bgam$ the set of points fixed by the natural action of $g$.  We set
\[F = \left\{ (a, b, c) \in \triples \st \{a,b,c\} \subset \fix(g)  \text{ for some nontrivial } g \in G  \right\}.\]  
\end{notation}  
Recall a subset $S$ of a metric space $M$ is \emph{$\epsilon$--dense} if every point of $M$ is distance at most $\epsilon$ of a point of $S$.  Also recall that $N_\kappa(S)$ denotes the open $\kappa$--neighborhood of a set $S$.  If $S$ is a subset of $\triples$, this neighborhood is to be taken with respect to $\dtrip$.
Our first goal is to establish the following. 

\begin{proposition}[F is sparse] \label{prop:F_sparse}
  For any $\epsilon >0$, there exists $\kappa> 0$ such $\triples \minus N_{\kappa}(F)$ is $\epsilon$--dense in $\triples$.
\end{proposition}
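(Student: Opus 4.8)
The plan is to reduce the statement to a compact piece of $\triples$, observe that only finitely many elements of $G$ contribute to $F$ near that piece, and then use Newman's theorem to see that each such contribution is nowhere dense.

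First I would note that $F$ is $G$--invariant: if $a,b,c\in\fix(g)$ then $ha,hb,hc\in h\fix(g)=\fix(hgh^{-1})$. Since $\dtrip$ is $G$--invariant, so are $N_\kappa(F)$ and its complement, for every $\kappa$. Using cocompactness (Proposition~\ref{prop:convergence_group}), fix a compact $D\subset\triples$ with $GD=\triples$; by $G$--invariance it then suffices to produce $\kappa>0$ so that every point of $D$ lies within $\dtrip$--distance $\epsilon$ of $\triples\minus N_\kappa(F)$. To localize, set $K=\overline{N_{2\epsilon}(D)}$, which is compact since $\dtrip$ is proper; by proper discontinuity the set $S=\{g\in G\minus\{\idG\}\st gK\cap K\neq\emptyset\}$ is finite. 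If $x=(a,b,c)\in F$ lies within $2\epsilon$ of $D$, then some nontrivial $g$ fixes each of $a,b,c$, hence fixes $x$, so $x\in gK\cap K$ and $g\in S$. Writing $\triples_g=\{(a,b,c)\in\triples\st a,b,c\in\fix(g)\}$, we conclude $F\cap N_{2\epsilon}(D)\subset F':=\bigcup_{g\in S}\triples_g$, and therefore $N_\kappa(F)\cap N_\epsilon(D)\subset N_\kappa(F')$ whenever $\kappa<\epsilon$; so it is enough to find $\kappa$ with every point of $D$ within $\epsilon/2$ of $\triples\minus N_\kappa(F')$.

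Next I would argue that $F'$ is nowhere dense in $\triples$. For $g\in S$, either $\triples_g=\emptyset$, or $g$ lies in the stabilizer of a point of $\triples$, which is finite by proper discontinuity, so $g$ has finite order; in the latter case $g$ acts nontrivially on $\bgam=S^n$ by Assumption~\ref{assump:faithful}, and by Newman's theorem \cite{Newman31} the fixed set of a nontrivial finite-order homeomorphism of the connected manifold $S^n$ has empty interior, so $\fix(g)$ has empty interior. Since $\triples$ is open in $(S^n)^3$ and $\triples_g\subset\fix(g)\times S^n\times S^n$, it follows that $\triples_g$ is a closed subset of $\triples$ with empty interior in $\triples$. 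A finite union of closed nowhere-dense sets is closed and nowhere dense, so $F'$ is a closed subset of $\triples$ with empty interior. Now I conclude by compactness: if no $\kappa$ works there are $\kappa_j\to 0$ and $d_j\in D$ with the $\dtrip$--ball $B_\epsilon(d_j)$ contained in $N_{\kappa_j}(F)$; passing to a subsequence, $d_j\to d_\infty\in D$. For $j$ large and any $y$ with $\dtrip(y,d_\infty)<\epsilon/2$ we have $y\in B_\epsilon(d_j)$, so $\dtrip(y,z)<\kappa_j$ for some $z\in F$; then $z$ lies within $2\epsilon$ of $D$, hence $z\in F'$ and $\dtrip(y,F')<\kappa_j$. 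Letting $j\to\infty$ gives $y\in\overline{F'}=F'$, so a nonempty open $\dtrip$--ball about $d_\infty$ lies in $F'$, contradicting the previous sentence.

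The step I expect to be the main obstacle is the nowhere-density of $F'$: the point-set reductions and the group-theoretic inputs (proper discontinuity, cocompactness, properness of $\dtrip$) are routine, but the smallness of each $\triples_g$ genuinely requires that a nontrivial finite-order homeomorphism of $S^n$ cannot be the identity on an open set, i.e.\ Newman's theorem, applied only after the reduction to a faithful boundary action. (Alternatively one could note that infinite-order elements of $G$ have exactly two fixed points in $\bgam$ and so contribute nothing to $F$; the proper-discontinuity count above handles both kinds of element uniformly.)
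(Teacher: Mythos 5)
Your argument is correct, but it takes a genuinely different route from the paper's. The paper first proves (Lemma~\ref{lem:MO}) that for a torsion element $g$ with $|\fix(g)|\ge 3$, the set $\fix(g)$ is the limit set of a quasi-convex, infinite-index subgroup $Q_g<G$, and then invokes Gitik--Mitra--Rips--Sageev \cite{GMRS98} to deduce (Corollary~\ref{cor:null_sequence}) that the collection of all such $\fix(g)$ is a null sequence in $\bgam$. That null sequence, together with the $\minsep$ lower bound on compact sets, is what lets the paper conclude that only finitely many $g$ contribute triples to a given compact set, and (implicitly) that each $\fix(g)$ is nowhere dense; the paper then passes to the compact quotient $\triples/G$ and applies Lemma~\ref{lem:sparse_general}. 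You instead extract ``only finitely many $g$ matter near a compact piece'' directly from proper discontinuity of $G\acts\triples$ (your finite set $S$), and you get ``$\fix(g)$ has empty interior'' from Newman's theorem plus Assumption~\ref{assump:faithful}, bypassing quasiconvexity and the GMRS machinery entirely; you then finish with a compactness contradiction rather than working in the quotient. Both routes are valid, and yours is more self-contained for this one proposition; it also makes fully explicit the ``$\fix(g)$ has empty interior'' step that the paper's proof of Proposition~\ref{prop:empty_int} asserts without spelling out. What the paper's route buys is Corollary~\ref{cor:null_sequence} as a reusable tool: the null sequence is invoked again in the proofs of Lemma~\ref{lem:theta_balls} and Lemma~\ref{lem:paths} (and Lemma~\ref{lem:MO} is of independent interest), though those particular uses could also be handled by your proper-discontinuity count.
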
 
Since we will frequently refer to $\triples \minus N_{\kappa}(F)$, we fix the following notation.
\begin{notation}
  For $\kappa>0$ we let $X_\kappa = \triples \minus N_{\kappa}(F)$.
\end{notation}

Proposition \ref{prop:F_sparse} has the following useful corollary.
\begin{corollary}\label{cor:deltaOK}
  For sufficiently small $\kappa$, property~\ref{item:surject} of Lemma~\ref{lem:delta} still holds with $\pi_\delta$ replaced by the restriction of $\pi_\delta$ to $X_\kappa$.
\end{corollary}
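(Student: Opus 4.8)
Recall that property \ref{item:surject} of Lemma~\ref{lem:delta} asserts that $\pi_\delta^{-1}(p)$ is non-empty for every $p \in \Gamma$; equivalently, every point of $\Gamma$ lies in $\pi_\delta(a,b,c)$ for some triple $(a,b,c) \in \triples$. The statement of Corollary~\ref{cor:deltaOK} is that for $\kappa$ small enough, one can always choose such a triple inside $X_\kappa = \triples \minus N_\kappa(F)$; that is, $\pi_\delta(X_\kappa)$ is still all of $\Gamma$.

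The plan is to combine the quantitative ``sparseness'' of $F$ from Proposition~\ref{prop:F_sparse} with a compactness/cocompactness argument and the continuity of the projection $\pi_\delta$. First I would fix, using property~\ref{item:surject} and $G$-equivariance, a single point $p_0 \in \Gamma$ together with a triple $z_0 \in \triples$ with $p_0 \in \pi_\delta(z_0)$; since $G$ acts cocompactly on $\triples$ and transitively on vertices of $\Gamma$ (vertices are group elements), it suffices to arrange that $p_0 \in \pi_\delta(z)$ for some $z \in X_\kappa$, because then translating by group elements handles all other vertices (note $X_\kappa$ is $G$-invariant since $F$ is, $\dtrip$ is $G$-invariant, and $\pi_\delta$ is $G$-equivariant). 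Next, I would observe that the set $\pi_\delta^{-1}(p_0) = \{ z \in \triples \st p_0 \in \pi_\delta(z) \}$ has nonempty interior, or at least contains a point $z_0$ around which a whole neighborhood $\mathcal{O}$ of triples $z$ still satisfies $p_0 \in \pi_{\delta}(z)$ — this follows from the same kind of neighborhood argument used in the proof of Lemma~\ref{lem:boundedK}: if $z_0 = (a,b,c)$ has $p_0$ within $\delta - 1$ of every geodesic joining two of $a,b,c$, then nearby triples have geodesics passing within $\delta$ of $p_0$ (one may need to start with a triple where $p_0$ is strictly interior to the relevant neighborhoods, which can be arranged by enlarging the hyperbolicity constant slightly or passing to $\pi_{\delta'}$ for $\delta' < \delta$ in the construction of $z_0$). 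Then, by Proposition~\ref{prop:F_sparse} applied with $\epsilon$ small enough that the $\epsilon$-ball around $z_0$ lies inside $\mathcal{O}$, there is a $\kappa > 0$ so that $X_\kappa$ is $\epsilon$-dense, hence $X_\kappa \cap \mathcal{O} \neq \emptyset$; picking $z \in X_\kappa \cap \mathcal{O}$ gives $p_0 \in \pi_\delta(z)$ with $z \in X_\kappa$, as desired.

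To conclude, I would invoke $G$-equivariance: any vertex $p \in \Gamma$ is $g p_0$ for some $g \in G$, and then $g z \in X_\kappa$ (by $G$-invariance of $X_\kappa$) satisfies $p = g p_0 \in g\, \pi_\delta(z_0\text{-type triple}) = \pi_\delta(gz)$, so $\pi_\delta^{-1}(p) \cap X_\kappa \neq \emptyset$. Finally, every point of $\Gamma$ (not just vertices) lies within distance $1$ of a vertex, and the ``$r-1$'' slack built into Definition~\ref{def:projection} together with Remark~\ref{rem:projection} absorbs this, so the conclusion extends from vertices to all of $\Gamma$; alternatively one just notes property~\ref{item:surject} as stated concerns $p \in \Gamma$ which in this graph setting is the vertex set together with edge interiors, and the edge-interior case follows from an adjacent vertex.

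The main obstacle is the middle step: showing that $\pi_\delta^{-1}(p_0)$ is ``fat'' enough — i.e. contains a nonempty open set of triples, not merely an isolated triple — so that $\epsilon$-density of $X_\kappa$ actually forces $X_\kappa$ to meet it. If the original triple realizing $p_0 \in \pi_\delta(z_0)$ is too ``tight'' (with $p_0$ exactly on the boundary of one of the neighborhoods $N_{\delta-1}(\gamma)$), a small perturbation could push $p_0$ out of $\pi_\delta$. The fix is to exploit the flexibility in how $\delta$ was chosen: in establishing property~\ref{item:surject} one actually gets $p_0 \in \pi_{\delta_0}(z_0)$ for some $\delta_0$ strictly smaller than the final $\delta$ (or one re-runs the argument of Lemma~\ref{lem:delta}\ref{item:ontoline} to produce a triple with definite slack), which gives an open neighborhood of triples $z$ with $p_0 \in \pi_\delta(z)$ and makes the compactness argument go through. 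Everything else is a routine combination of equivariance and the already-established sparseness of $F$.
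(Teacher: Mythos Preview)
Your argument is correct, but it is considerably more elaborate than what is actually needed, and the paper's proof is a one-liner that sidesteps precisely the obstacle you identify as the ``main obstacle.''

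The difference is in the order of quantifiers. You fix a vertex $p_0$ first and then try to locate a triple $z \in X_\kappa$ with $p_0 \in \pi_\delta(z)$; this forces you to argue that $\pi_\delta^{-1}(p_0)$ is ``fat'' (has nonempty interior) so that the $\epsilon$--density of $X_\kappa$ from Proposition~\ref{prop:F_sparse} guarantees a hit. That openness argument is delicate, as you note, and requires fiddling with slack in the choice of $\delta$. The paper instead picks $z$ first: as soon as $X_\kappa$ is nonempty, choose any $z \in X_\kappa$; since $\pi_\delta(z)$ is nonempty (this is built into the choice of $\delta$), it contains some vertex $p_0$, and hence $z \in \pi_\delta^{-1}(p_0) \cap X_\kappa$. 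Now $G$--equivariance of $\pi_\delta$, $G$--invariance of $X_\kappa$, and transitivity of $G$ on vertices finish the job exactly as in your final paragraph. No openness of $\pi_\delta^{-1}(p_0)$ is ever needed, and the only use of Proposition~\ref{prop:F_sparse} is the trivial consequence that $X_\kappa \neq \emptyset$ for small $\kappa$.

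What your approach buys is a more explicit quantitative statement (it would let you say something about \emph{which} $\kappa$ works in terms of the geometry near a specific $p_0$), but for the corollary as stated this is unnecessary. The paper's reversal of the choice order is the cleaner route.
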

\begin{proof}
  By equivariance and the fact that $\pi(x)$ is non-empty for every triple in $\triples$, Property~\ref{item:surject} holds as soon as $X_\kappa$ is non-empty.
\end{proof}

The proof of Proposition~\ref{prop:F_sparse} requires several preliminary results, starting with the following.   

\begin{lemma}\label{lem:MO}
For any torsion element $g \in G$, such that $\fix(g)$ has at least three points, there exists a quasi-convex subgroup $Q_g \subset G$ such that $\Lambda(Q_g) = \fix(g)$  
\end{lemma}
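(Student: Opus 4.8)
The plan is to realize $\fix(g)$ as the limit set of its setwise stabilizer, and then invoke the standard fact that stabilizers of limit sets of finite-order elements (or more generally, of sets of fixed points) in hyperbolic groups are quasi-convex. First I would set $Q_g = \{h \in G \st h \cdot \fix(g) = \fix(g)\}$, the setwise stabilizer of the fixed-point set of $g$ in $\bgam$. This is a subgroup of $G$ containing the cyclic group $\langle g\rangle$, and it contains every element commuting with $g$; in particular it contains the centralizer $C_G(g)$, which is already infinite and quasiconvex because $g$ has finite order (finite-order elements in hyperbolic groups have quasiconvex, in fact finite-index-in-their-normalizer, centralizers). The key point is to identify $\Lambda(Q_g)$ with $\fix(g)$.

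The inclusion $\Lambda(Q_g) \subseteq \fix(g)$ is immediate: $Q_g$ preserves the closed set $\fix(g)$, so its limit set, being the smallest nonempty closed $Q_g$-invariant subset of $\bgam$ (using that $Q_g$ is non-elementary — which holds since $\fix(g)$ has at least three points and $G$ acts as a convergence group, forcing $C_G(g)$ to be non-elementary), is contained in $\fix(g)$. For the reverse inclusion $\fix(g) \subseteq \Lambda(Q_g)$, I would argue that $Q_g$ acts on $\fix(g)$ as a convergence group with limit set all of $\fix(g)$: the dynamics of $G \acts \bgam$ restricted to the invariant subsphere-like set $\fix(g)$ still has the property that every point of $\fix(g)$ is approximated by translates, because $\fix(g)$ is itself the boundary of a quasiconvex subgroup (this is where I would cite the structure theory — e.g. that $\fix(g)$ is quasiconvex as a subset of $\bgam$ in the sense that the union of geodesics between its points is quasiconvex in $\Gamma$). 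Concretely, one shows the convex hull of $\fix(g)$ in $\Gamma$ is $Q_g$-cocompact (or at least that $Q_g$ has the same limit set as the action on that hull), which gives $\Lambda(Q_g) = \fix(g)$ and simultaneously the quasiconvexity of $Q_g$.

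The main obstacle is establishing that $Q_g$ is quasiconvex and that its limit set is exactly $\fix(g)$ rather than something smaller — a priori $Q_g$ could in principle be a "thin" subgroup whose limit set is a proper closed invariant subset of $\fix(g)$. The cleanest route is probably to first observe that $\fix(g)$, being the fixed set of a finite-order homeomorphism coming from a hyperbolic group element, is the limit set of the centralizer $C_G(g)$: indeed $C_G(g)$ is quasiconvex (centralizers of elements of finite order in hyperbolic groups are quasiconvex — this is classical, following from $g$ lying in a finite subgroup and the normalizer/centralizer being virtually the full stabilizer), and a ping-pong / convergence-group argument shows $\Lambda(C_G(g))$ must be all of $\fix(g)$: if $a \in \fix(g) \setminus \Lambda(C_G(g))$, then since $a$ is a conical limit point for $G$ (Proposition~\ref{prop:convergence_group}), one finds $g_i \in G$ with $g_i a \to p$, $g_i z \to q$ for $z \ne a$; analyzing how these interact with $\fix(g)$ forces a contradiction with $a \notin \Lambda(C_G(g))$. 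Then take $Q_g = C_G(g)$ (or the possibly larger setwise stabilizer $\mathrm{Stab}_G(\fix(g))$, which contains $C_G(g)$ with the same limit set and is still quasiconvex since it contains a quasiconvex finite-index... — more carefully, $\mathrm{Stab}_G(\fix(g))$ acts cocompactly on the same convex hull, hence is quasiconvex and has limit set $\fix(g)$). Either choice yields the lemma; I would present $Q_g = C_G(g)$ as it requires the least additional machinery.
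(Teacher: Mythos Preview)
Your outline is structurally close to the paper's proof: both take $Q_g$ to be the setwise stabilizer of $\fix(g)$ and aim to show that $Q_g$ acts cocompactly on the quasi-convex hull $C$ of $\fix(g)$, from which quasiconvexity and $\Lambda(Q_g) = \fix(g)$ follow simultaneously. (The paper realizes $Q_g$ concretely as the normalizer of a \emph{maximal} finite subgroup $H$ pointwise fixing $\fix(g)$, and checks this normalizer coincides with the setwise stabilizer.) The gap is that you assert cocompactness --- ``one shows the convex hull of $\fix(g)$ in $\Gamma$ is $Q_g$-cocompact'' --- without supplying the mechanism, and that mechanism is the entire content of the lemma. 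The paper's argument is a short pigeonhole: every element of $H$ moves every point of $C$ a uniformly bounded distance $r'$, so for any vertex $g_k \in C$ the conjugate $g_k^{-1}Hg_k$ lies in the $r'$--ball about $\idG$ in $\Gamma$; if infinitely many distinct $Q_g$--cosets met $C$, two such conjugates would coincide, forcing $g_ig_j^{-1}\in N_G(H) = Q_g$, a contradiction.

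Your fallback route through $C_G(g)$ has the same gap. Even granting that $C_G(g)$ is quasiconvex (this is not quite a textbook fact, and its usual proof is the same conjugates-in-a-ball argument, run with the single element $g$ in place of $H$), quasiconvexity alone does not give $\Lambda(C_G(g)) = \fix(g)$; a quasiconvex subgroup preserving $\fix(g)$ could a priori have strictly smaller limit set. Your conical-limit-point sketch (``analyzing how these interact with $\fix(g)$ forces a contradiction'') does not obviously lead anywhere: the conical sequence $g_i$ lies in $G$, not in $C_G(g)$, and there is no evident way to replace it by a sequence in $C_G(g)$ without already knowing cocompactness on $C$. The pigeonhole step is the missing idea; once you have it, either choice $Q_g = C_G(g)$ or $Q_g = \mathrm{Stab}_G(\fix(g))$ works.
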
 
Here we use the standard notation $\Lambda(Q_g)$ for the limit set of $Q_g$ in $\bgam$.  

\begin{proof}
This proof is adapted from an argument of Misha Kapovich \cite{Mathoverflow}.    
Fix a torsion element $g$.  Let $H$ be a maximal torsion subgroup pointwise fixing $\fix(g)$, and let $Q$ be the normalizer of $H$ in $G$.  We claim $Q$ is the stabilizer of $\fix(H) = \fix(g)$.  That $Q$ preserves this set is immediate.  For the reverse inclusion, if $f \in G$ preserves $\fix(H)$ then $fHf^{-1}$ pointwise fixes $\fix(H)$ as well.  Since $\fix(H)$ has at least $3$ points in it, the subgroup generated by $H$ and $fHf^{-1}$ cannot contain a loxodromic, and is therefore finite \cite[Ch. 8, \S 3]{GhH90}.  By maximality of $H$, we have that $f \in Q$.   This shows $Q$ is the stabilizer of $\fix(g)$. 

To conclude the proof we wish to show that $Q$ is quasi-convex and $\Lambda(Q) = \fix(g)$.   We can then take $Q_g = Q$ in the conclusion.
Let $C$ denote the quasi-convex hull of $\fix(g)$, meaning the set of all bi-infinite geodesics with both endpoints in $G$. 
Note that there is a uniform bound, say $r'$, on distance that any $h \in H$ can translate any point in $C$.

We now show $Q$ acts cocompactly on $C$, which is enough to show $Q$ has the desired properties.   
To see this, assume for contradiction that $C$ contains infinitely many distinct cosets $\{Q g_k\}$.  Since every $h \in H$ translates each $g_n$ a distance at most $r'$, the conjugates $g_k^{-1}Hg_k$ all lie in the $r'$ ball about the identity in $\Gamma$.  
It follows that for infinitely many pairs $i,j$ the subgroups $g_i^{-1}Hg_i = g_j^{-1}Hg_j$ agree, and thus $g_ig_j^{-1}\in Q$.  We conclude the cosets $\{Q g_k\}$ were not distinct, giving the desired contradiction.  
\end{proof} 
Recall that a \emph{null sequence} in a metric space is a collection of subsets $\mc{D}$ so that for every $\epsilon>0$, the set $\{D\in \mc{D}\mid \diam(D)>\epsilon\}$ is finite.
\begin{corollary}\label{cor:null_sequence}
  The collection of fixed point sets of nontrivial torsion elements is a null sequence in $\bgam$.
\end{corollary}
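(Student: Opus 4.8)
The plan is to deduce Corollary~\ref{cor:null_sequence} from Lemma~\ref{lem:MO} together with properness of the $G$--action on $\triples$ (Proposition~\ref{prop:convergence_group}). The statement to prove is that for every $\epsilon>0$, only finitely many fixed point sets $\fix(g)$ of nontrivial torsion elements have $\dvis$--diameter exceeding $\epsilon$. Note that each such set with at least three points is of the form $\Lambda(Q_g)$ for a quasi-convex subgroup $Q_g$ by Lemma~\ref{lem:MO}; the one- and two-point fixed sets are easy to handle directly (a loxodromic has a two-point fixed set, but torsion elements with exactly two fixed points are also possible, and those with one or zero fixed points contribute sets of diameter zero or are isolated — actually we should be a little careful and treat all cases uniformly below).

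First I would reduce to the three-or-more-point case. For any nontrivial torsion $g$ with $|\fix(g)|\ge 3$, pick a triple $x_g=(a,b,c)\in\triples$ with $\{a,b,c\}\subset\fix(g)$, and note that $\minsep(x_g)$ can be taken comparable to $\diam(\fix(g))$: indeed one can choose three points of $\fix(g)$ that are pairwise at distance at least a fixed fraction of $\diam(\fix(g))$ (since $\fix(g)$ is a closed set of that diameter in the compact metric space $\bgam$, it contains two points realizing the diameter, and if the diameter is large relative to the diameter of $\bgam$ then any third point is automatically far from at least one of them; more carefully, one gets $\minsep(x_g)\ge c\cdot\diam(\fix(g))$ for a universal constant $c$). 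So it suffices to show that for every $m>0$, only finitely many $g$ admit a triple $x_g$ with $\minsep(x_g)\ge m$ and $g\cdot x_g = x_g$. Since $\{x\in\triples:\minsep(x)\ge m\}$ is contained in a compact subset of $\triples$ (by Definition~\ref{def:minsep}, $1/\minsep$ is proper), this reduces to: only finitely many nontrivial $g\in G$ have a fixed point in a given compact $K\subset\triples$.

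The last claim is immediate from proper discontinuity: if infinitely many distinct nontrivial $g_i$ each fixed some point $x_i\in K$, then passing to a convergent subsequence $x_i\to x_\infty\in\triples$, the set $\{g:gK'\cap K'\neq\emptyset\}$ is finite for $K'$ a compact neighborhood of $x_\infty$, yet each $g_i$ lies in it for large $i$ — contradiction. This handles all torsion elements with $|\fix(g)|\ge 3$. For $|\fix(g)|\le 2$, the diameter bound is only an issue when $\diam(\fix(g))>\epsilon$, which forces $|\fix(g)|=2$ with the two points at distance $>\epsilon$; one can run the identical compactness argument using the pair (or a degenerate ``triple'') — alternatively, observe that each such $g$ is conjugate into finitely many stabilizers of pairs, but the cleanest route is to note that a torsion element fixing exactly two points $p,q$ acts as a nontrivial finite-order convergence dynamics fixing $\{p,q\}$, so $g$ stabilizes, hence comes close to fixing, nearby triples, again landing in a fixed compact set of $\triples$.

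The main obstacle is the reduction step relating $\diam(\fix(g))$ in the visual metric to $\minsep$ of a well-chosen triple — i.e.\ verifying that a fixed point set of large visual diameter contains three points that are pairwise well-separated, with the separation controlled below by a universal multiple of the diameter. This is a purely metric-geometric fact about closed subsets of the compact metric space $(\bgam,\dvis)$ and is elementary, but it is the one place where a genuine estimate (rather than a soft compactness/properness argument) is needed; everything after that reduction is a one-line application of proper discontinuity of the $G$--action on $\triples$.
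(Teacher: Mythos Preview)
The central reduction has a genuine gap. You assert that any closed $A\subset\bgam$ with $|A|\ge 3$ and diameter $d$ contains three pairwise $(\ge cd)$--separated points for a universal $c>0$, calling this ``a purely metric-geometric fact about closed subsets.'' It is false: take $A=\{p,q,r\}$ with $\dvis(p,q)=d$ and $r\in B_\eta(p)$; the only triple has $\minsep\le\eta$, arbitrarily small relative to $d$. The same occurs for perfect sets (place two small Cantor clusters far apart: by pigeonhole every triple has two points in one cluster), so knowing $\fix(g)=\Lambda(Q_g)$ is perfect does not immediately help. Your justification (``any third point is automatically far from at least one of them'') only bounds $\max\{\dvis(r,p),\dvis(r,q)\}$ from below, not the minimum, so it says nothing about $\minsep$. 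Salvaging the estimate for the specific sets $\fix(g)$ seems to require a uniformity over all torsion $g$ that already amounts to knowing there are finitely many $Q_g$ up to conjugacy --- i.e.\ finitely many conjugacy classes of torsion elements --- which is the input you are trying to avoid. Your two-point case is likewise incomplete: an element with $\fix(g)=\{p,q\}$ fixes no triple, and ``comes close to fixing nearby triples'' does not yield a single compact $K\subset\triples$ with $gK\cap K\ne\emptyset$ uniformly over all such $g$, so properness on $\triples$ cannot be invoked.

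The paper's argument sidesteps both issues. It first uses the standard fact that a hyperbolic group has only finitely many conjugacy classes of torsion elements, so it suffices to show that the $G$--translates of a single $\fix(g)$ form a null sequence. Then Lemma~\ref{lem:MO} identifies $\fix(g)=\Lambda(Q_g)$ for a quasi-convex $Q_g$, Assumption~\ref{assump:faithful} forces $Q_g$ to have infinite index, and \cite[Corollary~2.5]{GMRS98} gives precisely that the $G$--translates of the limit set of an infinite-index quasi-convex subgroup form a null sequence.
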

\begin{proof}
  Since $G$ is hyperbolic, it contains only finitely many conjugacy classes of torsion elements (see \cite[2.2 B]{GromovEssay}).  It therefore suffices to show the fixed point sets of elements in a single conjugacy class form a null sequence.  These fixed point sets are exactly the $G$--translates of $\fix(g)$ for some torsion element $g$.  
  By Lemma~\ref{lem:MO}, $\fix(g)$ is equal to $\Lambda(Q_g)$ for a quasi-convex subgroup $Q_g$ of $G$.  By Assumption~\ref{assump:faithful}, $g$ does not fix all of $\bgam$, so this subgroup $Q_g$ must be infinite index in $G$.
  By \cite[Corollary 2.5]{GMRS98}, the $G$--translates of $\Lambda(Q_g)$ form a null sequence.  
\end{proof}

\begin{proposition} \label{prop:empty_int}
$F$ is a closed set with empty interior. 
\end{proposition}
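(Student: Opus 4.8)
The plan is to realize $F$ as a countable union of closed subsets of $\triples$, use the null‑sequence property (Corollary~\ref{cor:null_sequence}) to see that this union is closed, and then use a Baire category argument together with Newman's theorem to see that it has empty interior. First I would observe that if $g\in G$ is nontrivial and $\{a,b,c\}\subset\fix(g)$ with $a,b,c$ distinct, then $g$ fixes at least three points of $\bgam$ and hence is torsion, since an infinite‑order element of a hyperbolic group is loxodromic and fixes exactly two boundary points. So, writing $\Delta_g=\triples\cap(\fix(g))^{3}$, we have $F=\bigcup_g\Delta_g$ with $g$ ranging over the (countably many) nontrivial torsion elements with $|\fix(g)|\ge 3$. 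Each $\fix(g)$ is closed in $\bgam=S^n$, being the fixed set of a homeomorphism, and $\triples$ is open in $(S^n)^3$ (the fat diagonal is closed), so each $\Delta_g$ is closed in $\triples$.

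For closedness of $F$ I would argue sequentially. Suppose $x_i=(a_i,b_i,c_i)\in\Delta_{g_i}$ and $x_i\to x=(a,b,c)\in\triples$. Since $a,b,c$ are distinct, $\mu:=\dvis(a,b)>0$, so $\dvis(a_i,b_i)>\mu/2$ for all large $i$; as $a_i,b_i\in\fix(g_i)$, this forces $\diam(\fix(g_i))>\mu/2$. By Corollary~\ref{cor:null_sequence}, only finitely many distinct fixed‑point sets of nontrivial torsion elements have diameter greater than $\mu/2$, so after passing to a subsequence we may assume $\fix(g_i)=\fix(g)$ is independent of $i$. Then $a,b,c$ are limits of sequences in the closed set $\fix(g)$, hence lie in it, so $x\in\Delta_g\subset F$. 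This is the step I expect to be the main obstacle: an infinite union of closed sets need not be closed, and the null‑sequence property — together with the fact that the relevant pairwise visual distance (equivalently $\minsep$) stays bounded below near a genuine triple — is precisely what makes the union effectively locally finite.

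For empty interior, the key point is that $(\fix(g))^{3}$ has empty interior in $(S^n)^3$: if it contained a nonempty open set, it would contain a product $V_1\times V_2\times V_3$ of nonempty open sets, and then $\fix(g)\supset V_1$ would have nonempty interior. But $g$ is a nontrivial homeomorphism of finite order of the connected manifold $S^n$, so by Newman's theorem \cite{Newman31} its fixed set contains no nonempty open set (a periodic homeomorphism cannot be the identity on an open subset of a connected manifold). Hence $\Delta_g\subset\triples\cap(\fix(g))^{3}$ is nowhere dense in $\triples$.

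Finally, $\triples$ is a manifold, hence a Baire space, so the countable union $F=\bigcup_g\Delta_g$ of nowhere dense sets is meager and in particular has empty interior. Combined with the preceding paragraph, $F$ is closed with empty interior. (Emptiness of the interior of $\fix(g)=\Lambda(Q_g)$ could alternatively be obtained from Lemma~\ref{lem:MO} and minimality of the action of the non‑elementary subgroup $Q_g$ on its own limit set, but quoting Newman's theorem — already in play in this section — is shorter.)
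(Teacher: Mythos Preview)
Your proof is correct, and the underlying mechanism---the null-sequence property of the fixed sets---is the same one the paper uses, but the packaging differs. The paper works locally: for each compact $K\subset\triples$ only finitely many $g$ satisfy $\diam(\fix(g))\ge\minsep(K)$, so $F\cap K$ is a \emph{finite} union of closed nowhere dense sets, hence closed with empty interior; since $\triples$ is exhausted by such $K$, the global statement follows without invoking Baire. You instead treat $F$ globally as a countable union, prove closedness by a sequential argument (which is really the same local-finiteness observation), and invoke the Baire category theorem for empty interior. The paper's route is slightly more economical (finite unions obviate Baire), while yours is arguably more transparent and, usefully, makes explicit the reason each $\fix(g)$ has empty interior---via Newman's theorem---which the paper leaves implicit.
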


\begin{proof}
  Let $K \subset \triples$ be an arbitrary compact set.
  By Corollary~\ref{cor:null_sequence} there are only finitely many elements $g$ so that $\diam(\fix(g))\ge \minsep(K)$.  There are therefore only finitely many nontrivial $g$ so that $K$ contains a triple of points in $\fix(g)$.  For each such $g$, the set $\fix(g)\times\fix(g)\times\fix(g)$ intersects $K$ in a closed subset with empty interior in $\triples$.  It follows that $F\cap K$ is closed with empty interior.  Since $\triples$ can be exhausted by compact sets, the conclusion follows.
\end{proof} 

We also need the following general result about closed sets with empty interior in compact metric spaces. 

\begin{lemma} \label{lem:sparse_general}
  Let $A$ be a compact metric space and $C \subset A$ a closed subset with empty interior.  Given any $\epsilon >0$, there exists $\kappa>0$ so that $A \minus N_\kappa(C)$ is $\epsilon$--dense in $A$.
\end{lemma}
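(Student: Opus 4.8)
The plan is to argue by contradiction using compactness. Suppose the conclusion fails for some $\epsilon>0$. Then for every $\kappa>0$, the set $A\minus N_\kappa(C)$ fails to be $\epsilon$--dense, meaning there is a point $x_\kappa\in A$ whose $\epsilon$--ball $B_\epsilon(x_\kappa)$ is disjoint from $A\minus N_\kappa(C)$, i.e. $B_\epsilon(x_\kappa)\subset N_\kappa(C)$. Apply this with $\kappa=1/m$ for each $m\in\bN$ to produce a sequence $x_m$ with $B_\epsilon(x_m)\subset N_{1/m}(C)$.

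By compactness of $A$, pass to a subsequence so that $x_m\to x_\infty$ for some $x_\infty\in A$. I claim $B_{\epsilon/2}(x_\infty)\subset C$, which contradicts the assumption that $C$ has empty interior (note $B_{\epsilon/2}(x_\infty)$ is a nonempty open set). To see the claim, let $y\in B_{\epsilon/2}(x_\infty)$. For all large $m$ we have $d(x_m,x_\infty)<\epsilon/2$, so $y\in B_\epsilon(x_m)\subset N_{1/m}(C)$, meaning there is a point $c_m\in C$ with $d(y,c_m)<1/m$. Hence $c_m\to y$, and since $C$ is closed, $y\in C$. This proves the claim and gives the contradiction.

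The only mild subtlety — and the step I would be most careful about — is the quantifier manipulation in the definition of "$\epsilon$--dense": failure of $\epsilon$--density of $A\minus N_\kappa(C)$ means some point of $A$ is at distance more than $\epsilon$ from every point of $A\minus N_\kappa(C)$, which (using that balls are open) is exactly the statement $B_\epsilon(x_\kappa)\cap(A\minus N_\kappa(C))=\emptyset$, i.e. $B_\epsilon(x_\kappa)\subseteq N_\kappa(C)$. Everything else is a routine diagonal/limit argument, and compactness of $A$ is what makes it run. Note that compactness is genuinely needed: on a non-compact space like $\mathbb{R}$ with $C$ a suitable closed nowhere-dense set accumulating at infinity, no single $\kappa$ works uniformly, which is precisely why the ambient lemma is later applied to compact pieces of $\triples$ rather than to $\triples$ itself.
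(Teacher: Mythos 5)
Your proof is correct, and it takes a genuinely different route from the paper's. The paper gives a direct construction: since $C$ has empty interior, every $x\in A$ has a point $p_x\notin C$ with $d(x,p_x)<\epsilon$; compactness extracts finitely many $p_{x_1},\dots,p_{x_k}$ whose $\epsilon$-balls cover $A$, and then $\kappa$ is taken to be half the minimum of the (positive, finitely many) distances $d(p_{x_i},C)$. You instead argue by contradiction via sequential compactness: if no $\kappa$ works, you extract a sequence of ``bad'' centers $x_m$ with $B_\epsilon(x_m)\subset N_{1/m}(C)$, pass to a limit $x_\infty$, and show $B_{\epsilon/2}(x_\infty)\subset C$ because $C$ is closed, contradicting empty interior. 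Both arguments are elementary and correct; the paper's has the mild advantage of being constructive (it exhibits an explicit $\kappa$ and an explicit $\epsilon$-dense finite set), whereas yours is arguably shorter to state and makes the role of compactness transparent via the bad-sequence mechanism. One tiny imprecision in your write-up: ``distance more than $\epsilon$ from every point of $A\minus N_\kappa(C)$'' is not \emph{exactly} equivalent to $B_\epsilon(x_\kappa)\cap(A\minus N_\kappa(C))=\emptyset$ (the latter only gives $d\ge\epsilon$), but the implication runs in the direction you need, so nothing is lost. Your side remark that compactness is essential is also correct.
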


\begin{proof} 
  Let $d_A$ denote the metric on $A$.  Since $C$ has empty interior, for all $x \in A$ there exists a point $p_x \notin C$ with $d_A(p_x,x)<\epsilon$.  Since $A$ is compact, there is a finite collection $x_1,\ldots,x_k$ so that the open $\epsilon$--balls $B_\epsilon(p_{x_i})$ cover $A$.  The set $C$ is closed, so the distance $d_A(p_{x_i},C)$ is positive for each $i$.  We let $\kappa$ be half the minimum of the distances $d_A(p_{x_i},C)$.  Each $x\in A$ is contained in one of the balls $B_\epsilon(p_{x_i})$, so the set $\{p_{x_1},\ldots,p_{x_k}\}\subset A \minus N_\kappa(C)$ is $\epsilon$--dense.
\end{proof}

\begin{proof}[Proof of Proposition \ref{prop:F_sparse}]
  By Proposition~\ref{prop:empty_int},  $F$ is closed with empty interior.
  Recall from Proposition~\ref{prop:convergence_group} that $G$ acts properly discontinuously and cocompactly on $\triples$.
  Thus $F/G\subset \triples/G$ is closed with empty interior and the proposition follows immediately from Lemma~\ref{lem:sparse_general}, taking $A = \triples/G$ and $C = F/G$. 
\end{proof}

\subsection{Additional properties of $X_\kappa$}
We establish some properties of the sets $X_\kappa := \triples \minus N_\kappa(F)$.  First we make the following observation, relevant to the application of Propositon~\ref{prop:good_map}.
\begin{lemma}
  For any $\kappa>0$, the group $G$ acts properly, freely, and cocompactly by isometries on $X_\kappa$.
\end{lemma}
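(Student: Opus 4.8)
The plan is to verify each of the three properties---properness, freeness, and cocompactness---for the restricted action, leveraging the corresponding facts already established for the full triple space $\triples$.

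\textbf{Properness and isometric action.} The metric $\dtrip$ was chosen to be $G$--invariant, so $G$ certainly acts on $\triples$ by isometries, and hence acts by isometries on any $G$--invariant subset. Since $F$ is a $G$--invariant set (if $\{a,b,c\}\subset\fix(g)$ then $\{ha, hb, hc\}\subset\fix(hgh^{-1})$), its open $\kappa$--neighborhood $N_\kappa(F)$ is also $G$--invariant, and therefore so is $X_\kappa = \triples\minus N_\kappa(F)$. Properness of the action on $X_\kappa$ is inherited from properness of the action on $\triples$ (Proposition~\ref{prop:convergence_group}): for any compact $L\subset X_\kappa$, the set $\{g\in G\st gL\cap L\neq\emptyset\}$ is contained in the analogous set for $L$ viewed as a compact subset of $\triples$, which is finite.

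\textbf{Freeness.} This is the point of removing $N_\kappa(F)$, and I expect it to be the only step requiring a genuine (if short) argument. Suppose $g\in G\minus\{\idG\}$ fixes a point $x = (a,b,c)\in X_\kappa$. Then $g$ fixes each of $a, b, c$ individually (the $G$--action on $\triples$ is by the diagonal action, which fixes an ordered triple iff it fixes each coordinate), so $\{a,b,c\}\subset\fix(g)$ with $g$ nontrivial, i.e. $x\in F$. But $F\subset N_\kappa(F)$, contradicting $x\in X_\kappa = \triples\minus N_\kappa(F)$. Hence no nontrivial element has a fixed point, so the action is free.

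\textbf{Cocompactness.} Here one must be slightly careful: a priori removing an open set could destroy cocompactness. The cleanest route is to observe that $X_\kappa/G$ is a closed subset of the compact space $\triples/G$ (cocompactness from Proposition~\ref{prop:convergence_group}), hence compact. Indeed $X_\kappa$ is closed in $\triples$ (complement of an open set) and $G$--invariant, so its image in $\triples/G$ is closed; a closed subset of a compact space is compact. Therefore $G$ acts cocompactly on $X_\kappa$. Combining the three items gives the lemma. The only conceptual content is the fixed-point computation in the freeness step; everything else is a matter of checking that the relevant properties pass to a $G$--invariant closed subset.
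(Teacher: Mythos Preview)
Your proof is correct and follows essentially the same approach as the paper's own argument, which compresses the same three observations into two sentences. Your version simply spells out the details---the $G$--invariance of $F$ and hence of $N_\kappa(F)$, the closed-subset-of-compact argument for cocompactness, and the explicit computation that a fixed triple lies in $F$---that the paper leaves implicit.
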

\begin{proof}
  The group $G$ already acts properly and cocompactly on $X$, and $N_\kappa(F)$ is open and $G$--invariant, so $G$ still acts properly and cocompactly on $X_\kappa$.
  The only points of $\triples$ with nontrivial stabilizer are in $F$, which has been removed.
\end{proof}

Next we give a technical refinement of Proposition~\ref{prop:F_sparse}, which is what we actually use in the next section.

\begin{lemma} \label{lem:theta_balls}
Suppose we are given a compact set $D \subset \triples$, and $\epsilon'' < \epsilon' < \minsep(D)/2$.    There exists $\kappa >0$ such that, 
for any $(a, \theta, c) \in D$, there exists $c'$ so that $\dvis(c, c') < \epsilon''$ and $\{a\} \times B_{\epsilon'}(\theta) \times \{c'\}$ lies in the interior of  $X_\kappa$.  
\end{lemma}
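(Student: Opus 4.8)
The plan is to prove a pointwise statement first---for each $(a_0,\theta_0,c_0)\in D$ there is a point $c_0'$ close to $c_0$ making the \emph{closed} slab $\{a_0\}\times\overline{B_{\epsilon'}(\theta_0)}\times\{c_0'\}$ disjoint from $F$---and then extract a single uniform $\kappa$ by a compactness argument over $D$. Throughout I will use two elementary bookkeeping facts. First, if $(a,\theta,c)\in D$ and $\dvis(c,c')\le \epsilon''$, then for every $\theta'$ with $\dvis(\theta',\theta)\le\epsilon'$ the points $a,\theta',c'$ are pairwise at distance at least $\minsep(D)-\epsilon'-\epsilon''>0$ (using $\epsilon''<\epsilon'<\minsep(D)/2$); in particular every closed slab $\{a\}\times\overline{B_{\epsilon'}(\theta)}\times\{c'\}$ that appears below is a compact subset of $\triples$. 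Second, $X_\kappa=\triples\minus N_\kappa(F)$ increases to $\triples\minus F$ as $\kappa\to 0$, and the interior of $X_\kappa$ contains the open set $\{x\st \dtrip(x,F)>\kappa\}$; so it suffices to keep each relevant slab at a definite positive distance from $F$.

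For the pointwise step, I fix $p=(a_0,\theta_0,c_0)\in D$ and note that a nontrivial $g\in G$ fixing three distinct boundary points is not loxodromic, hence has finite order; so by Corollary~\ref{cor:null_sequence} the sets $\fix(g)$ that arise this way form a null family in $\bgam$, and each is closed with empty interior (a nonempty open subset of $\fix(g)$ would make $\fix(g)^3$, and hence $F$, have nonempty interior, contradicting Proposition~\ref{prop:empty_int}). Consequently only finitely many such sets $\Phi_1,\dots,\Phi_L$ have diameter at least $\minsep(D)-\epsilon'\;(>0)$. I would then pick
\[ c_0'\in B_{\epsilon''/2}(c_0)\ \minus\ \bigcup\{\Phi_i\st a_0\in\Phi_i\}, \]
which is possible since the subtracted set is a finite union of nowhere dense subsets of $\bgam$ while $B_{\epsilon''/2}(c_0)$ is nonempty open. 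If a point $(a_0,\theta',c_0')$ of the resulting closed slab lay in $F$, it would lie in $\fix(g)^3$ for a torsion $g\ne 1$; then $\fix(g)$ contains $a_0$ and $\theta'$, so $\diam(\fix(g))\ge \dvis(a_0,\theta')\ge \minsep(D)-\epsilon'$, forcing $\fix(g)=\Phi_i$ for some $i$ with $a_0\in\Phi_i$, and then $c_0'\in\Phi_i$, a contradiction. So the compact slab $\{a_0\}\times\overline{B_{\epsilon'}(\theta_0)}\times\{c_0'\}$ is disjoint from the closed set $F$, hence lies at distance $2\kappa_p>0$ from $F$ for some $\kappa_p>0$.

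For the uniformization I would exploit that near $p$ one can keep using the \emph{same} auxiliary point $c_0'$, absorbing the perturbation of $(a_0,\theta_0)$ into a slightly enlarged box. For small $\eta>0$ the sets $\overline{B_\eta(a_0)}\times\overline{B_{\epsilon'+\eta}(\theta_0)}\times\{c_0'\}$ are compact subsets of $\triples$ that decrease, as $\eta\to 0$, to $\{a_0\}\times\overline{B_{\epsilon'}(\theta_0)}\times\{c_0'\}$; by a standard compactness argument (a nested family of compact sets whose intersection is disjoint from the closed set $F$ is eventually contained in $N_{\kappa_p}$ of that intersection) there is $\eta_p>0$, which I also take below $\epsilon''/2$, so that this box lies at distance $\ge\kappa_p$ from $F$. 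Put $O_p=(B_{\eta_p}(a_0)\times B_{\eta_p}(\theta_0)\times B_{\eta_p}(c_0))\cap D$. For $(a,\theta,c)\in O_p$ one has $\dvis(c,c_0')<\epsilon''$, and by the triangle inequality $\{a\}\times B_{\epsilon'}(\theta)\times\{c_0'\}\subseteq \overline{B_{\eta_p}(a_0)}\times\overline{B_{\epsilon'+\eta_p}(\theta_0)}\times\{c_0'\}$, so every point of this slab is at distance $\ge\kappa_p$ from $F$. Since $D$ is compact, finitely many $O_{p_1},\dots,O_{p_M}$ cover it, and I set $\kappa=\tfrac12\min_j\kappa_{p_j}>0$. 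Given arbitrary $(a,\theta,c)\in D$, pick $j$ with $(a,\theta,c)\in O_{p_j}$ and take $c'$ to be the auxiliary point chosen for $p_j$: then $\dvis(c,c')<\epsilon''$ and $\{a\}\times B_{\epsilon'}(\theta)\times\{c'\}\subseteq\{x\st\dtrip(x,F)>\kappa\}\subseteq\operatorname{int}(X_\kappa)$, as required.

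I expect the main obstacle to be exactly this uniformization: a single $\kappa$ must work for all of $D$, yet the auxiliary point $c'$ and the distance to $F$ both vary with the triple, and closed metric balls $\overline{B_{\epsilon'}(\theta)}$ in $\bgam$ need not depend continuously on $\theta$ (the visual metric need not be geodesic), so one cannot simply invoke continuity of $\theta\mapsto\overline{B_{\epsilon'}(\theta)}$ in the Hausdorff metric. Enlarging to a box of slightly larger radius---where the containment $\{a\}\times\overline{B_{\epsilon'}(\theta)}\times\{c_0'\}\subseteq\overline{B_{\eta_p}(a_0)}\times\overline{B_{\epsilon'+\eta_p}(\theta_0)}\times\{c_0'\}$ is merely the triangle inequality---together with the finiteness supplied by the null-family property (which is what makes the choice of $c_0'$ possible in the first place), is what makes the compactness argument close.
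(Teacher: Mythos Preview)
Your proof is correct. The pointwise step---using the null-sequence property to isolate the finitely many fixed sets $\Phi_i$ of diameter $\ge\minsep(D)-\epsilon'$, choosing $c_0'$ off those containing $a_0$, and concluding the closed slab misses $F$---is sound; and your uniformization via the enlarged boxes $\overline{B_\eta(a_0)}\times\overline{B_{\epsilon'+\eta}(\theta_0)}\times\{c_0'\}$ together with a finite cover of $D$ closes correctly (the nested-compacta argument you sketch does yield $K_\eta$ eventually at distance $\ge\kappa_p$ from $F$).

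The paper takes a more global route that avoids the pointwise-then-patch structure. It first enlarges $D$ to a compact $D'$ containing every product $B_{\epsilon'}(x)\times B_{\epsilon'}(y)\times B_{\epsilon'}(z)$ with $(x,y,z)\in D$, lists once and for all the finitely many nontrivial $g$ whose fixed triples meet $D'$, and sets $C\subset\bgam$ to be the union of their fixed sets. Invoking Lemma~\ref{lem:sparse_general} gives a $\lambda<\epsilon''$ with $\bgam\minus N_\lambda(C)$ already $\epsilon''$--dense; a single $\kappa$ is then chosen so that $N_{2\kappa}(F)\cap D'$ lies in the $\lambda$--neighborhood of $F$ in the product visual metric. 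For any $(a,\theta,c)\in D$ one just takes $c'\in\bgam\minus N_\lambda(C)$ with $\dvis(c,c')<\epsilon''$, and the slab automatically misses $N_{2\kappa}(F)$. Compared with your argument, the paper's version front-loads the uniformity (one global $C$, one global $\lambda$, one global $\kappa$) and produces a $c'$ depending only on $c$, not on which chart of a finite cover the triple lands in; your version is a bit more hands-on and self-contained (you do not need the density Lemma~\ref{lem:sparse_general}, only nowhere-density of each $\Phi_i$), at the cost of the extra compactness pass over $D$.
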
 

We recall that $\minsep$ is defined in terms of the visual metric $\dvis$.  

\begin{proof}
  Let $D'$ be a compact subset of $\triples$ containing $B_{ \epsilon'}(x) \times B_{ \epsilon'}(y) \times B_{ \epsilon'}(z)$ for all $(x, y, z) \in D$.
  Let $F$ be the set of elements $g\in G - \{\idG\}$ so that $g$ fixes some triple in $D'$.  
  By Corollary~\ref{cor:null_sequence}, the fixed point sets of torsion elements form a null sequence in $\partial G$.  For each $g\in F$, we have $\diam(\fix(g))\ge \minsep(D') > 0$, so $F$ is finite.  Let $C = \bigcup\{\fix(g) \st g\in F\}$ be the union of these fixed sets in $\bgam$.  Since $C$ is a finite union of closed sets with empty interior, $C$ is closed with empty interior.
  
  By Lemma \ref{lem:sparse_general}, we can choose a positive $\lambda < \epsilon''$ such that $\bgam-N_\lambda(C)$ is $\epsilon''$--dense in $\bgam$.  Choose $\kappa$ small enough so that $N_{2\kappa}(F) \cap D'$ is a subset of the neighborhood of $F$ of radius $\lambda$ in the product visual metric on $(\bgam)^3$.  

  To verify this $\kappa$ works, choose $(a, \theta, c) \in D$.  Since $\bgam-N_\lambda(C)$ is $\epsilon''$--dense, we can find $c'\in \bgam \minus N_\lambda(C)$ with $\dvis(c, c')< \epsilon''$.   
  We claim that $\{a\} \times B_{\epsilon'}(\theta) \times \{c'\}$ is contained in $\triples \minus N_{2\kappa}(F)$, and hence in the interior of $X_\kappa$.   Indeed, for any $b \in B_{\epsilon'}(\theta)$, we have $(a,b,c') \in D'$.   Suppose $(x,y,z) \in F$ is a closest point of $F$ to $(a,b,c')$ in the product metric.  If the visual distance between each coordinate of $(x,y,z)$ and $(a,b,c)$ were less than $\lambda$, which is less than $\epsilon'$, then we would have $(x,y,z) \in D' \cap F$.  By our definition of $C$, this means that $x$, $y$, and $z$ all belong to $C$.  
  But the minimum distance from $c'$ to a point of $C$ is greater than $\lambda$ by construction, so we conclude $(a,b,c')$ lies outside the $\lambda$--neighborhood of $F$ in the product metric, and hence outside $N_{2\kappa}(F)$.  
\end{proof}

Finally, we need a technical result which will come into play at the very end of the proof, when we show our semi-conjugacy is well-defined.  To state it we need a definition.
\begin{definition}[$\kappa$ path property]
  We say a compact $D\subset \triples$ has the \emph{$\kappa$ path property} if the following holds:

  For each $b \in \bgam$, and pair of points $(a, b, c)$ and $(a', b, c')$ in $D$, there is a path $a_t$ from $a$ to $a'$ and $c'' \in \bgam$ such that $(a_t, b, c'') \in D \minus N_\kappa(F)$ for all $t$.
\end{definition}
\begin{lemma} \label{lem:paths}
For any compact set $K \subset \triples$, there exists $\kappa > 0$ and a compact set $D \supset K$ so that $D$ has the $\kappa$ path property.
\end{lemma}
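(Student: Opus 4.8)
The plan is to take $D$ to be a compact ``thickening'' of $K$ whose three boundary coordinates stay uniformly separated in the round metric of $S^n$, and to pin down $\kappa$ only at the end. Since $K$ is compact in $\triples$, there is $\eta>0$ so that every triple in $K$ has all three pairwise round distances $\ge\eta$ (write $d_{S^n}$ for the round metric). Fix $\eta'>0$ with $\eta'\le\eta$ and small enough that three round balls of radius $3\eta'$ cannot cover $S^n$, and put
\[ D = \{\,(x,y,z)\in(S^n)^3 \st d_{S^n}(x,y),\ d_{S^n}(x,z),\ d_{S^n}(y,z)\ge\eta'\,\}. \]
Because $d_{S^n}$ and $\dvis$ induce the same topology, $\minsep$ is bounded below on $D$, so $D$ is compact in $\triples$, and plainly $D\supset K$.

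Next I fix the ``obstruction data'' in advance. Let $C\subset\bgam$ be the union of those fixed sets $\fix(g)$ of nontrivial (hence torsion) elements with round diameter $>\eta'/2$; by Corollary~\ref{cor:null_sequence} this union is finite, and each $\fix(g)$ is closed with empty interior (cf.\ the proofs of Proposition~\ref{prop:empty_int} and Lemma~\ref{lem:theta_balls}), so $C$ is closed with empty interior. By Lemma~\ref{lem:sparse_general} choose $\lambda_0\in(0,\eta'/2)$ with $S^n\minus N_{\lambda_0}(C)$ sufficiently dense in $S^n$. Let $D^+$ be the closed $\dtrip$-unit-neighborhood of $D$, which is still compact in $\triples$; since $\dtrip$ and the coordinatewise round metric metrize the same compact topology on $D^+$, they are uniformly equivalent there, say with modulus $\omega$ (so $\dtrip(p,q)\le t$ implies round distance $\le\omega(t)$ coordinatewise, and $\omega(t)\to0$). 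Finally choose $\kappa\in(0,1)$ with $\omega(\kappa)<\min\{\eta'/2,\lambda_0\}$.

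To check the $\kappa$ path property, fix $b\in\bgam$ and $(a,b,c),(a',b,c')\in D$, so $d_{S^n}(a,b),d_{S^n}(a',b)\ge\eta'$. Since three round balls of radius $3\eta'$ leave a ball of some definite radius uncovered, and $S^n\minus N_{\lambda_0}(C)$ is dense, we may pick $c''\notin N_{\lambda_0}(C)$ with $d_{S^n}(c'',b),d_{S^n}(c'',a),d_{S^n}(c'',a')\ge3\eta'$. The set of admissible first coordinates, $\{x\in S^n : (x,b,c'')\in D\}=\{d_{S^n}(\cdot,b)\ge\eta'\}\minus N_{\eta'}(c'')$, is a closed round cap (homeomorphic to the closed $n$-ball) with a small open ball removed from its interior; since $n\ge2$ this set is path-connected --- the single point where Assumption~\ref{assump:notcircle} is essential --- and it contains $a$ and $a'$, so there is a path $a_t$ from $a$ to $a'$ inside it, giving a path $t\mapsto(a_t,b,c'')$ in $D$. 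If some $(a_t,b,c'')$ belonged to $N_\kappa(F)$, it would lie $\dtrip$-within $\kappa$ of a triple $(x',b',c''')\in F\cap D^+$ with $\{x',b',c'''\}\subset\fix(g)$; then $b'$ is within $\omega(\kappa)$ of $b$ and $c'''$ within $\omega(\kappa)$ of $c''$, so $d_{S^n}(b',c''')\ge3\eta'-2\omega(\kappa)>\eta'/2$, forcing $\fix(g)\subset C$ and hence $c'''\in C$; but then $c''$ lies within $\omega(\kappa)<\lambda_0$ of $C$, contradicting our choice of $c''$. So the path stays in $D\minus N_\kappa(F)$, as required.

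I expect the main difficulty to be exactly this uniform choice of $\kappa$: the path property quantifies over all $b$ and all pairs of triples in $D$, hence over a continuum of triples $(a_t,b,c'')$ that must simultaneously stay $\kappa$-far from $F$. The key that makes a single $\kappa$ work is that a near-collision of $(a_t,b,c'')$ with $F$ is already witnessed by the pair $(b,c'')$ lying near a common fixed set --- necessarily one of the finitely many ``large'' ones making up $C$, by the null-sequence property --- while $c''$ has been chosen uniformly outside a neighborhood of the single set $C$. The subsidiary point, converting $\dtrip$-closeness into coordinatewise round closeness, rests only on the fact that $\dtrip$ and the product round metric are topologically, hence uniformly on compacta, equivalent.
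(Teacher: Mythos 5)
Your proposal is correct and follows essentially the same strategy as the paper's proof: take $D$ to be a compact set of triples that are uniformly separated in the round metric, let $C$ be a finite union of ``large'' fixed point sets via Corollary~\ref{cor:null_sequence}, use Lemma~\ref{lem:sparse_general} to pick $c''$ outside a neighborhood of $C$ and far from the given data, and connect $a$ to $a'$ in the admissible region using $n\ge2$. The only real variations are that you choose $c''$ before constructing the path $a_t$ (relying on path-connectedness of a ball-minus-ball rather than an explicitly modified geodesic as in the paper), and that you are more careful about why a single $\kappa$ works uniformly, by introducing the enlarged compactum $D^+$ and the modulus $\omega$ of uniform equivalence between $\dtrip$ and the coordinatewise round metric.
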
 

\begin{proof}[Proof]   
Fix a round metric $d_{\mathrm{rnd}}$ on $\bgam = S^n$.  The collection \[C_m:= \left \{(a, b, c) \in \triples \st d_{\mathrm{rnd}}(a,b)\geq \tfrac{1}{m},\ d_{\mathrm{rnd}}(a, c) \geq \tfrac{1}{m},\ d_{\mathrm{rnd}}(b,c) \geq \tfrac{1}{m}\right \}\] forms an exhaustion of $\triples$ by compact sets, so we may choose $m$ sufficiently large so that $K \subset C_m$ and the diameter of $S^n$ in the round metric is larger than $\frac{10}{m}$.   
 
Since $D$ is compact, only finitely many elements of $G$ fix a triple of points that meets $D$.   Let $C \subset S^n$ denote the union of these finitely many fixed sets; it is a closed subset of $\bgam$ with empty interior.    

By Lemma \ref{lem:sparse_general}, there exists $\lambda < \frac{1}{m}$ such that $S^n - N_\lambda (C)$ is $\frac{1}{m}$-dense in $S^n$ (again, using the round metric).  
Consider a shortest length geodesic path with respect to the round metric on $S^n$ between $a$ and $a'$.  If this path does not meet the closed $\frac{1}{m}$-ball about $b$, then call this path $a_t$.  Otherwise, modify this path by removing the segment that intersects the closed $\frac{1}{m}$-ball, replacing it with a path that lies on the boundary of this ball, and call the resulting path $a_t$.  (We are here using our Assumption~\ref{assump:notcircle} that $n \ge 2$.)  In either case, $a_t$ lies in the union of a radius $\frac{1}{m}$ ball and a geodesic segment.  

Since the diameter of $S^n$ in the chosen round metric is larger than $\frac{10}{m}$, we may find a point $\hat{c} \in S^n$ that avoids both the $\frac{2}{m}$-neighborhood of this path $a_t$ and the $\frac{2}{m}$ neighborhood of $b$.  Since $S^n - N_\lambda (C)$ is $\frac{1}{m}$--dense, we may move this point a distance of at most $\frac{1}{m}$ to find a point $c'' \notin N_\lambda(C)$.   Thus we have, $(a_t, b, c'') \in D$ for all $t$.  

Finally, choose $\kappa$ small enough so that any element of a triple in $N_\kappa(F) \cap D$ lies within the $\lambda$-neighborhood (in the round metric) of some point of $C$.    Note that this choice of $\kappa$ only depends on $D$ and $\lambda$, which both depended only on the set $K$.  
The point $c''$ was chosen so that $c'' \notin N_\lambda(C)$.  Thus, we conclude that $(a_t, b, c'') \notin N_\kappa(F)$ holds for all $t$, as desired.  
\end{proof} 

Note that even though $\dvis$ and the round metric give the same topology on $S^n$, the metric $\dvis$ may be rather strange-looking.  In particular $\dvis$ is not Riemannian, so even very small balls with respect to $\dvis$ may not be connected.  This is the reason for the use of the round metric in the proof above.

\section{Nearby representations give quasi-geodesic partitions}\label{sec:quasigeodesic}

In this section we fix a constant $\kappa$ to define a space $X_\kappa \subset \triples$ as in Section \ref{sec:Xkappa}, and fix a neighborhood of $\rho_0$.  Our goal is to show that every representation $\rho$ in this neighborhood has $\rho_0$ as a factor.  In Section \ref{ss:geodesicintersection} we show that representations in this neighborhood induce quasi-geodesic partitions of a section of $X_\kappa \times \bgam \to X_\kappa$.  The endgame of the proof, carried out in Section \ref{sec:endpoint}, consists of using the endpoints of these quasi-geodesics to build a semi-conjugacy.

\subsection{Fixing a neighborhood of $\rho_0$}\label{ss:neighborhood}
We first fix a neighborhood $V$ of the identity in the set of continuous self-maps $\bgam \to \bgam$.  By shrinking $V$ if necessary, we may assume that $V$ consists of degree one maps.
There is some $C_V>0$ so that any map $h$ satisfying the lower bound on Gromov products
\begin{equation}\label{eq:smallsemiconjugacy}\tag{$\dagger$}
  \forall  x \in \bgam,\quad   \gprod{h(x)}{x}{\idG}  > C_V 
\end{equation}
lies in $V$.  

In Definition~\ref{def:U(V)} we will specify a neighborhood $\mathcal U(V)$ of $\rho_0$ in $\Hom(G, \Homeo(\bgam))$, ultimately showing that any $\rho\in \mathcal U(V)$ is semiconjugate to $\rho_0$ by a map $h$ satisfying~\eqref{eq:smallsemiconjugacy}.
This is sufficient to prove Theorem \ref{thm:main}.

To specify $\mathcal U(V)$ and set up the proof, we need to fix several intermediate constants and compact sets.   Recall $\mathcal{S}$ denotes our chosen generating set for $G$.

\begin{notation} \label{notation:H}
  Let $H = \max\{2\delta,Q(3\delta)\}+1$, where $Q(\cdot)$ is from Lemma~\ref{lem:diambound}.
\end{notation} 

\begin{lemma}\label{lem:D01}
  There exist compact sets $D_0\subset D_{\frac{1}{2}} \subset D_1 \subset D_2$ in $\triples$ and positive constants $\epsilon_2 \le \epsilon_1$ and $\kappa_0$ satisfying the following.
  \begin{enumerate}
  \item $D_0$ contains $\pi^{-1}(\idG)$. (Hence $G D_0 = \triples$.)
  \item\label{itm:Dhalf} $D_{\frac{1}{2}} = \bigcup\{gD_0\st g\in \mathcal{S}\cup \mathcal{S}^{-1}\cup\{\idG\}\}$.
  \item\label{itm:ep1} $\epsilon_1 <\frac{1}{2}\minsep(D_{\frac{1}{2}})$.
  \item\label{itm:D1} $D_1$ contains $B_{\epsilon_1}(x)\times B_{\epsilon_1}(y)\times B_{\epsilon_1}(z)$ for every $(x,y,z)\in D_{\frac{1}{2}}$.
    \item $D_1$ contains $\pi^{-1}(B_R(\idG))$ where
    \begin{equation}
      \label{eq:R}\tag{$**$}
      R > \max \{ 24 H + 52\delta + \diam(\pi(D_0)),  C_V + 4H + 11\delta \}.
    \end{equation}
  \item \label{itm:slices} For all $\kappa < \kappa_0$, the $\kappa$ path property of Lemma \ref{lem:paths} holds for $D_1$,
  \item  $\epsilon_2 < \frac{1}{2}\minsep(D_1)$. 
  \item  $ D_2$  contains $B_{\epsilon_2}(x)\times B_{\epsilon_2}(y)\times B_{\epsilon_2}(z)$ for every $(x,y,z)\in D_1$.
  \end{enumerate}
\end{lemma}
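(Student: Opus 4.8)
The statement to prove is Lemma~\ref{lem:D01}, which asserts the existence of a nested chain of compact sets and constants satisfying eight interlocking conditions. The plan is essentially a bookkeeping exercise: construct the objects in dependency order, invoking earlier results to guarantee that each choice can be made compatibly with the ones before it. The key observation is that there is no circularity — each item only constrains objects that come later in the list, so a linear construction works.

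\medskip

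\textbf{Proof sketch.} First I would choose $D_0$. Since $\pi = \pi_\delta$ and $\pi_\delta^{-1}(\idG)$ is non-empty by property~\ref{item:surject} of Lemma~\ref{lem:delta}, and since $\pi_\delta^{-1}(\idG)$ is contained in a compact set — indeed, $\pi^{-1}(\idG)$ has bounded image under the (proper) function $1/\minsep$, or more directly one checks it is closed and lies in a compact region of $\triples$ because any triple $(a,b,c)$ with $\idG \in \pi(a,b,c)$ has all pairwise visual distances bounded below using Remark~\ref{rem:projection} and \eqref{eq:bilip} — we may take $D_0$ to be any compact set containing $\pi^{-1}(\idG)$. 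By $G$-equivariance of $\pi$ and property~\ref{item:surject}, $GD_0 \supseteq G\pi^{-1}(\idG) = \pi^{-1}(G\idG) = \pi^{-1}(G) = \triples$ since $\pi(x)\ne\emptyset$ for all $x$ (Remark~\ref{rem:projection}); this gives item (1). Now $D_{\frac12}$ is \emph{defined} by item \ref{itm:Dhalf} as a finite union of translates $gD_0$, hence compact. Item \ref{itm:ep1} just requires $\epsilon_1 < \tfrac12\minsep(D_{\frac12})$, and $\minsep(D_{\frac12}) > 0$ since $1/\minsep$ is proper (Definition~\ref{def:minsep}) and $D_{\frac12}$ is compact; pick any such $\epsilon_1$, to be possibly shrunk later. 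Item \ref{itm:D1} is satisfied by taking $D_1$ to contain the compact set $\bigcup_{(x,y,z)\in D_{\frac12}} \overline{B_{\epsilon_1}(x)}\times\overline{B_{\epsilon_1}(y)}\times\overline{B_{\epsilon_1}(z)}$ (this is compact as the continuous image of $D_{\frac12}\times \overline{B_{\epsilon_1}}^{\,3}$ inside $(\bgam)^3$, and it lies in $\triples$ because $\epsilon_1 < \tfrac12\minsep(D_{\frac12})$ keeps the three coordinates distinct).

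\medskip

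Next, fix $R$ to be any number satisfying the inequality \eqref{eq:R}; note the right-hand side involves only $H$, $\delta$, $C_V$, and $\diam(\pi(D_0))$, the last of which is finite by Lemma~\ref{lem:diambound} (or Lemma~\ref{lem:boundedK} applied to the compact set $D_0$). Then enlarge $D_1$ if necessary so that it also contains $\pi^{-1}(B_R(\idG))$; this set is bounded in $\triples$ by Lemma~\ref{lem:boundedK} applied componentwise — more precisely $\pi^{-1}(B_R(\idG)) = \bigcup_{s\in B_R(\idG)} \pi^{-1}(s)$, and one checks its closure is compact by the same properness argument as for $\pi^{-1}(\idG)$ — so a compact $D_1$ containing both this set and the set from item \ref{itm:D1} exists. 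For item \ref{itm:slices}, apply Lemma~\ref{lem:paths} with the compact set $K = D_1$: this produces a $\kappa_0 > 0$ and a compact set $D_1' \supseteq D_1$ with the $\kappa$ path property for all $\kappa < \kappa_0$; replace $D_1$ by $D_1'$ (which still satisfies all earlier constraints since they are ``contains'' conditions). Finally, choose $\epsilon_2 \le \epsilon_1$ with $\epsilon_2 < \tfrac12\minsep(D_1)$ — again positive by properness of $1/\minsep$ — and let $D_2$ be a compact set containing $\bigcup_{(x,y,z)\in D_1}\overline{B_{\epsilon_2}(x)}\times\overline{B_{\epsilon_2}(y)}\times\overline{B_{\epsilon_2}(z)}$, which lies in $\triples$ for the same reason as before.

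\medskip

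\textbf{Main obstacle.} There is no deep difficulty here; the only point requiring a little care is verifying that the various preimage sets $\pi^{-1}(\idG)$ and $\pi^{-1}(B_R(\idG))$ have compact closure in $\triples$, since a priori $\pi^{-1}$ of a point could be unbounded in the triple space even though its $\pi$-image is a single vertex. The resolution is that if $(a,b,c)\in\pi^{-1}(s)$ then by Remark~\ref{rem:projection} every geodesic with endpoints among $\{a,b,c\}$ passes within $\delta$ of $s$, which via the Gromov product estimate \eqref{eq:bilip} forces $\dvis(a,b),\dvis(a,c),\dvis(b,c)$ all to be bounded \emph{below} in terms of $\dgam(\idG,s)$ and $\delta$ — equivalently $\minsep$ is bounded below on $\pi^{-1}(B_R(\idG))$ — and since $1/\minsep$ is proper (Definition~\ref{def:minsep}) this means the set is contained in a compact subset of $\triples$. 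Alternatively one cites Lemma~\ref{lem:boundedK} in the form that $\pi_r$ of a compact set is bounded, combined with properness, run in reverse. The remainder is the purely mechanical propagation of ``contains'' and ``less than'' constraints down the list, with each choice legal because the relevant infima ($\minsep$ over a compact set) are strictly positive and the relevant diameters are finite.
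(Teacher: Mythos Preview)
Your proposal is correct and follows essentially the same linear construction as the paper's own proof: choose $D_0$, define $D_{\frac{1}{2}}$, pick $\epsilon_1$, assemble a compact $K$ satisfying items~\ref{itm:D1} and \eqref{eq:R}, feed $K$ into Lemma~\ref{lem:paths} to obtain $D_1$ and $\kappa_0$, then finish with $\epsilon_2$ and $D_2$. The only differences are cosmetic --- the paper takes $D_0 \supseteq \pi^{-1}(B_1(\idG))$ rather than just $\pi^{-1}(\idG)$, and does not spell out the precompactness of $\pi^{-1}(B_R(\idG))$ as carefully as you do via the $\minsep$ lower bound --- but the logical structure is identical.
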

The sets $D_0$, $D_1$, and $D_2$ play a major role in the rest of the proof.  The set $D_{\frac{1}{2}}$ will only appear in the last step.  

\begin{proof}
  We let $D_0$ be a compact subset of $\triples$ containing $\pi^{-1}(B_1(\idG))$ and define 
  $D_{\frac{1}{2}}$ as in item~\eqref{itm:Dhalf}.  We fix any positive $\epsilon_1$ satisfying item~\eqref{itm:ep1}.
  Let $K \subset \triples$ be a compact set large enough to contain the union of $\pi^{-1}(B_R(\idG))$, where $R$ is in equation \eqref{eq:R}, as well as the union of the sets
$B_{\epsilon_1}(x)\times B_{\epsilon_1}(y)\times B_{\epsilon_1}(z)$ for every $(x,y,z)\in D_{\frac{1}{2}}$. 
Applying Lemma \ref{lem:paths} to $K$ gives us a constant $\kappa_0$ and set $D_1$ containing $K$ so that the $\kappa_0$ path property holds for $D_1$; note that by definition the $\kappa$ path property remains true for any $\kappa < \kappa_0$, since $N_\kappa(F) \subset N_{\kappa_0}(F)$.    Then take any $\epsilon_2$ and $D_2$ as described.  

\end{proof}

\begin{lemma}
  \label{lem:epsilon}
  There exists positive $\epsilon <\frac{1}{2}\epsilon_2$ such that the following  hold:
\begin{enumerate}[label = ($\epsilon$\arabic*)]
\item\label{item:geodesicsmiss} Any geodesic between points $a,b$ with $\dvis(a,b)\le 2 \epsilon$ has distance at least $10\delta$ from $\pi(D_2)$ (and hence from $B_R(\idG)$).
\item\label{item:Bp} For every $p\in \bgam$, there is a closed contractible set $B_p$ so that
  \[ B_{\epsilon}(p) \subset B_p \subset B_{\epsilon_2}(p) .\]
\end{enumerate}
\end{lemma}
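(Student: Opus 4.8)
\textbf{Proof plan for Lemma~\ref{lem:epsilon}.}

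The plan is to choose $\epsilon$ to satisfy the two conditions separately and then take the minimum. First I would address~\ref{item:geodesicsmiss}. The point is that the set $\pi(D_2)$ is bounded: $D_2$ is compact, so by Lemma~\ref{lem:boundedK} the set $\pi(D_2)$ is a bounded subset of $\Gamma$, say contained in $B_{R'}(\idG)$ for some $R'$ (and $B_R(\idG)\subset B_{R'}(\idG)$ after enlarging $R'$). Now if $\dvis(a,b)$ is small, then by the upper bound in~\eqref{eq:bilip} the Gromov product $\gprod{a}{b}{\idG}$ is large; concretely, $\dvis(a,b)\le 2\epsilon$ forces $\lambda^{-\gprod{a}{b}{\idG}}\ge \dvis(a,b)/k_2$, hence $\gprod{a}{b}{\idG}\ge \log_\lambda(k_2/(2\epsilon))$, which tends to $\infty$ as $\epsilon\to 0$. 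Since any geodesic $\gamma$ from $a$ to $b$ comes within $\delta$ (using~\ref{item:idealtriangles}, slimness of the bigon, or a standard estimate) of realizing the Gromov product — more precisely every point of $\gamma$ is at distance at least $\gprod{a}{b}{\idG}-O(\delta)$ from $\idG$ — any such geodesic stays outside a large ball about $\idG$. Choosing $\epsilon$ small enough that $\gprod{a}{b}{\idG}- O(\delta) > R' + 10\delta$ whenever $\dvis(a,b)\le 2\epsilon$ gives the required separation of $10\delta$ from $\pi(D_2)$, and a fortiori from $B_R(\idG)$.

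Next I would handle~\ref{item:Bp}. Here we use that $\bgam = S^n$ and that $\dvis$ and the round metric $d_{\mathrm{rnd}}$ induce the same topology. Since $S^n$ is compact, there is a uniform modulus of continuity relating the two metrics: there is a function $\eta(\cdot)$ with $\eta(t)\to 0$ as $t\to 0$ so that $d_{\mathrm{rnd}}(p,q)\le\eta(\dvis(p,q))$ and a function going the other way. Given $\epsilon_2$, pick a round radius $\rho>0$ small enough that every round ball $B^{\mathrm{rnd}}_\rho(p)$ is contained in $B_{\epsilon_2}(p)$ (using the $\dvis$-to-round comparison). Then pick $\epsilon>0$ small enough that $B_\epsilon(p)\subset B^{\mathrm{rnd}}_\rho(p)$ for all $p$ (using the round-to-$\dvis$ comparison, which is uniform by compactness). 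Take $B_p := \overline{B^{\mathrm{rnd}}_{\rho}(p)}$, a closed round ball, which is contractible (it is a closed metric ball in a round sphere of small radius, hence a closed topological $n$--disk). This gives $B_\epsilon(p)\subset B_p\subset B_{\epsilon_2}(p)$. Finally set $\epsilon$ to be the minimum of the value from~\ref{item:geodesicsmiss}, the value from~\ref{item:Bp}, and $\tfrac14\epsilon_2$, so that $\epsilon<\tfrac12\epsilon_2$ as required.

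The only mild subtlety — the step I would be most careful about — is~\ref{item:Bp}: one must not try to use $\dvis$-balls directly, since as the authors note after Lemma~\ref{lem:paths}, small $\dvis$-balls need not be connected, let alone contractible. The round metric circumvents this, and the uniformity of the two-sided comparison between $\dvis$ and $d_{\mathrm{rnd}}$ is exactly compactness of $S^n$. For~\ref{item:geodesicsmiss} the only thing to get right is the standard fact that points on a geodesic between $a,b\in\bgam$ are far from $\idG$ when $\gprod{a}{b}{\idG}$ is large; this is routine $\delta$--hyperbolic geometry (e.g.\ a point $x$ on $[a,b]$ satisfies $d(\idG,x)\ge \gprod{a}{b}{\idG}$ up to a bounded error, since $\gprod{a}{b}{\idG}$ measures how long $[\idG,a]$ and $[\idG,b]$ fellow-travel).
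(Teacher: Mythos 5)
Your overall approach matches the paper's: item~\ref{item:geodesicsmiss} via boundedness of $\pi(D_2)$ (Lemma~\ref{lem:boundedK}) together with the visual-metric/Gromov-product relation, and item~\ref{item:Bp} via round balls and uniform comparability of the round and visual metrics on the compact $S^n$. One correction to the computation for~\ref{item:geodesicsmiss}: you invoke the \emph{upper} bound in~\eqref{eq:bilip} to write $\lambda^{-\gprod{a}{b}{\idG}}\ge \dvis(a,b)/k_2$, but this yields an \emph{upper} bound $\gprod{a}{b}{\idG}\le\log_\lambda(k_2/\dvis(a,b))$, not the lower bound you claim. The inequality you actually need comes from the \emph{lower} bound in~\eqref{eq:bilip}: $k_1\lambda^{-\gprod{a}{b}{\idG}}\le\dvis(a,b)\le 2\epsilon$ gives $\lambda^{-\gprod{a}{b}{\idG}}\le 2\epsilon/k_1$, hence $\gprod{a}{b}{\idG}\ge\log_\lambda(k_1/(2\epsilon))\to\infty$ as $\epsilon\to 0$. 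With that fix, the argument is fine (and the paper itself sidesteps this quantitative step, simply observing that compactness gives a positive lower bound on $\dvis$ between endpoints of geodesics through a fixed ball).
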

\begin{proof}
  That the first condition can be met is a consequence of compactness.  Indeed, by Lemma~\ref{lem:boundedK}, the $10 \delta$-neighborhood of the set $\pi(D_2)$ is bounded in $\Gamma$, hence contained in some compact ball centered at $\idG$.  Thus, there is a positive lower bound on the visual distance between the endpoints of any geodesic passing through that ball.
  
  We now address the second condition.  Since $S^n$ is compact, the visual and round metrics are uniformly equivalent.  Thus there is a $\mu>0$ so that the round ball of radius $2\mu$ about $p$ is contained in $B_{\epsilon_2}(p)$ for every $p\in S^n$, so we can take $B_p$ to be the closed round $\mu$--ball.
  There is then some $\epsilon>0$ so that $B_\epsilon(p)\subset B_p$ for every $p$.
\end{proof}

\begin{definition}[The neighborhood $\mathcal U (V)$]  \label{def:U(V)}Recall we fixed $C_V>0$ at the beginning of the subsection, see Equation \eqref{eq:smallsemiconjugacy}.    Using this $C_V$, fix the compact sets $D_0\subset D_{\frac{1}{2}} \subset D_1 \subset D_2 \subset \triples$ and positive constants $\epsilon < \epsilon_2 \le \epsilon_1$ and $\kappa_0$ satisfying the conclusions of Lemmas~\ref{lem:D01} and~\ref{lem:epsilon} above. 

Fix some $\kappa > 0$ satisfying 
  \begin{enumerate}
  \item $\kappa < \kappa_0$ and
    \item\label{item:kappa} For any $(a,c,\theta)\in D_1$, there exists $c'\in B_{\epsilon}(c)$ so that $\{a\}\times B_{\epsilon_2}(\theta)\times\{c'\}$ lies in the interior of $X_\kappa$.
  \end{enumerate}
  That such a $\kappa$ exists follows by applying Lemma~\ref{lem:theta_balls} with $D = D_1$, $\epsilon' = \epsilon_2$, and $\epsilon'' = \epsilon$.  Note that $\epsilon < \epsilon_2 < \minsep(D_1)/2$, so the hypotheses of that lemma are satisfied.
  
  Define $\mathcal U(V)$ to be a neighborhood of $\rho_0$ in $\Hom(G, \Homeo(\bgam))$ consisting of representations satisfying both of the following:
  
\begin{enumerate}[resume]
\item \label{item:smallgens} $\dvis(\rho(g),\rho_0(g))<\epsilon$ for every $g\in \mathcal{S}\cup \mathcal{S}^{-1}$; and 
\item \label{item:flat}the map $f^\rho$ defined in Section~\ref{sec:fdef}, taking $X = X_\kappa$, has the property that $f^\rho((D_2 \cap X) \times \{\theta\})\subset B_\epsilon(\theta)$ for every $\theta$.
\end{enumerate}  
\end{definition}

We now fix notation.  
\begin{convention} \label{convention:rho}
Fix some $\rho \in \mathcal U(V) $.  Since $\rho$ is fixed, we henceforth drop it from the notation, writing $f = f^\rho$.  
Since $\kappa$ is also fixed, we also drop it from our notation when convenient, writing $X$ for $X_\kappa$. 
\end{convention} 

We keep this convention for the remainder of the work.  Our eventual goal is to show that $\rho_0$ is a factor of $\rho$ via a semiconjugacy satisfying \eqref{eq:smallsemiconjugacy}.  
The reader will note that when $F = \emptyset$, then $X = \triples$.  (For example, this holds when $G$ is torsion-free.)
It may be helpful on a first reading to keep this special case in mind, thinking of $X$ as the space of distinct triples.

\subsection{A section partitioned into coarsely geodesic sets}\label{ss:geodesicintersection}

We begin by describing a natural foliation on a section of $\triples \times \bgam \to \triples$. 
Let $\sigma\from \triples \to (\triples \times \partial G)$ be the section given by $\sigma((a,b,c)) = ((a,b,c),b)$.
The image of $\sigma$ has a topological foliation by leaves 
\[L_a := \{ ((a,b,c), b) \st b, c \in \partial G - \{a\}, b \neq c \, \}.\]  
The leaves of this foliation are {\em coarsely geodesic} in the following sense.  For any $a \neq \theta$, the set  $L_a\cap (\triples \times \{\theta\})$ is the image in $\sigma(\triples)$ of a set whose projection to $\Gamma$ is Hausdorff distance at most $\delta$ from any geodesic joining $a$ and $\theta$. The set $L_a\cap (\triples\times\{a\})$ is empty.

We will show that the sets 
\[ L_a \cap f^\rho(X \times \{\theta\}) \subset \sigma(X)\]
 behave much like the sets  $L_a\cap (\triples \times \{\theta\})$, in the sense that for each $a$ and $\theta$ in $\partial G$, this set is either empty or looks ``coarsely geodesic,'' meaning the projection of each nonempty set $L_a \cap f^\rho(X \times \{\theta\})$ to the Cayley graph $\Gamma$ lies at a uniformly bounded Hausdorff distance from a bi-infinite geodesic in $\Gamma$.  

 To formalize this, define $\bar\pi$ on $\sigma(X)$ by $\bar\pi = \pi \circ \sigma^{-1}$.
 It follows from the definition of $\pi$ on sets that for $Z \subset \sigma(X)$, the set $\bar\pi(Z)$ is equal to the union $\bigcup_{z \in Z} \bar\pi(z)$.  Our goal for the section can now be restated:

\begin{proposition} \label{prop:quasi-geodesic}
For any $(a,\theta)\in \bgam^2$,
either $L_a \cap f^\rho(X \times \{\theta\})$ is empty, or $\bar\pi(L_a \cap f^\rho(X \times \{\theta\}))$ is uniformly bounded Hausdorff distance from a bi-infinite geodesic in $\Gamma$ with one endpoint equal to $a$.  
\end{proposition}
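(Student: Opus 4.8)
The plan is to reduce Proposition~\ref{prop:quasi-geodesic} to an application of Lemma~\ref{lem:bi-infinite}, with $S = \bar\pi(L_a \cap f^\rho(X\times\{\theta\}))$ and with the constant $H$ from Notation~\ref{notation:H}. So suppose the set $L_a \cap f^\rho(X\times\{\theta\})$ is nonempty; we must verify the two local hypotheses of Lemma~\ref{lem:bi-infinite}, namely that $S$ is $\tfrac{R}{4}$--connected and that near every $s \in S$ the set $S$ fellow-travels a bi-infinite geodesic $\gamma_s$, the geodesic in turn being approximated by $S$ inside $B_R(s)$.

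First I would unpack the meaning of a point of $S$. A point $s \in \bar\pi(L_a \cap f^\rho(X\times\{\theta\}))$ arises from a triple $(a,b,c) \in X$ with $f^\rho((a,b,c),b) = ((a,b,c),\theta)$ — that is, with $\Phi^\rho((a,b,c))(b) = \theta$ — and $s \in \pi(a,b,c)$. The key translation step: by Remark~\ref{rem:projection}, $s$ is within $\delta$ of any geodesic joining $a$ and $b$; and the near-flatness property \ref{item:flat} of $f^\rho$, together with the estimate from Lemma~\ref{lem:epsilon}\ref{item:geodesicsmiss}, controls the discrepancy between $b$ and $\theta$ \emph{seen from $s$}. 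Concretely, since $\Phi^\rho$ moves $b$ to $\theta$ by a homeomorphism that (via property \ref{item:flat}, applicable because the relevant triple lies in $D_2 \cap X$ whenever $s$ is near $B_R(\idG)$) displaces points less than $\epsilon$ in the visual metric, we get $\dvis(b,\theta) < \epsilon$, hence by \ref{item:geodesicsmiss} the point $s$ (which is close to $B_R(\idG)$ after translating by a group element) lies within bounded distance of a geodesic joining $a$ and $\theta$ as well. This is how we produce the candidate $\gamma_s$: translate the situation by $g\in G$ so that $gs$ is near $\idG$, use equivariance of $f^\rho$ and of $\pi$ to see that the translated triple lies in $D_2 \cap X$, apply \ref{item:flat} there, and then translate back. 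Thus $\gamma_s$ can be taken to be a bi-infinite geodesic from $a$ to $\theta$, and automatically every such $\gamma_s$ has $a$ as an endpoint, which is what gives the last clause of the Proposition once Lemma~\ref{lem:bi-infinite} produces $\gamma$ with $\gprod{\gamma^{+\infty}}{\gamma_s^{+\infty}}{s}$ large — forcing $\gamma$ to share the endpoint $a$ (here one also uses the Gromov product bound $R > C_V + 4H + 11\delta$ from \eqref{eq:R}).

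The fellow-traveling hypotheses are then checked as follows. For the inclusion $S \cap B_R(s) \subset N_H(\gamma_s)$: any $s' \in S \cap B_R(s)$ comes from a triple $(a,b',c')\in X$ with $\Phi^\rho((a,b',c'))(b') = \theta$; translating so that $s$ is near $\idG$, the triple $(a,b',c')$ lands in $D_2 \cap X$ (this is where the choice of $D_1, D_2$ and the radius $R$ is calibrated — $B_R(\idG)$-preimages under $\pi$ sit in $D_1 \subset D_2$, and $s'$ being within $R$ of $s$ keeps us inside), so \ref{item:flat} gives $\dvis(b',\theta) < \epsilon$, whence $s'$ is within $\delta + (\text{bounded})$ of a geodesic from $a$ to $\theta$; combined with the diameter bound $Q(3\delta)$ from Lemma~\ref{lem:diambound} one sees $s' \in N_H(\gamma_s)$ after possibly enlarging to the chosen $H = \max\{2\delta, Q(3\delta)\}+1$. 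For the reverse inclusion $\gamma_s \cap B_R(s) \subset N_H(S)$: given a point $x$ on $\gamma_s$ near $s$, use property~\ref{item:kappa} / Lemma~\ref{lem:theta_balls} to find, for a triple realizing $x$ under $\pi$, a nearby admissible third coordinate $c'$ so that the modified triple lies in the interior of $X_\kappa$ and still projects near $x$ while being sent by $f^\rho$ into $B_\epsilon(\theta)$ — giving a genuine point of $S$ within $H$ of $x$. Finally, $\tfrac{R}{4}$--connectedness of $S$: given two points of $S$, connect the underlying leaf-parameters by a path using the $\kappa$ path property of $D_1$ (Lemma~\ref{lem:paths}, available since $\kappa < \kappa_0$), which produces a path of triples staying in $D_1 \minus N_\kappa(F) \subset X$ with fixed middle coordinate; pushing this path through $f^\rho$ and $\pi$ and using that $\pi$ has bounded-diameter images (Lemma~\ref{lem:diambound}) and varies ``continuously enough'' along the path yields a chain of points of $S$ with consecutive gaps at most $\tfrac{R}{4}$.

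\textbf{Main obstacle.} The real work — and the step I expect to be most delicate — is the bookkeeping that makes all the local statements \emph{uniform}: one must ensure that whenever $s \in S$, the translated triple genuinely lands in $D_2 \cap X$ (not merely in $\triples$), so that property~\ref{item:flat} actually applies with the intended $\epsilon$; this requires carefully chasing the inclusions $\pi^{-1}(B_R(\idG)) \subset D_1$, $D_1 \subset D_2$, the $\epsilon_1, \epsilon_2$-fattenings, and the removal of $N_\kappa(F)$, checking at each stage that the perturbed/translated/$c'$-modified triples do not escape these sets. The constants $R, H, \epsilon, \epsilon_1, \epsilon_2, \kappa$ were assembled in Lemma~\ref{lem:D01} and Lemma~\ref{lem:epsilon} precisely to close this loop, so the proof is mostly a matter of invoking each hypothesis at the right moment; but verifying $R > 24H + 16\delta$ (the hypothesis of Lemma~\ref{lem:bi-infinite}) and that the resulting Hausdorff bound $3H + 6\delta$ is the claimed uniform constant, and that the Gromov-product conclusion with bound $R - (4H+10\delta) > C_V + \delta$ suffices to pin the endpoint to $a$, are the points where one must be careful not to lose track of an additive constant.
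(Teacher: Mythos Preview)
Your overall architecture is right---the proof does go through Lemma~\ref{lem:bi-infinite} with the constant $H$ of Notation~\ref{notation:H}, and the local fellow-traveling is obtained by translating to $D_0$ and invoking near-flatness~\ref{item:flat}---but three steps do not go through as written.

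\textbf{The geodesic $\gamma_s$ cannot be taken from $a$ to $\theta$.} When you translate by $g$ so that $gs$ is near $\idG$, equivariance (Proposition~\ref{prop:good_map}) carries $L_a\cap Y_\theta$ to $L_{ga}\cap Y_{\rho(g)\theta}$, not to $L_{ga}\cap Y_{g\theta}$. Near-flatness at the translated triple therefore gives $\dvis(gb,\rho(g)\theta)<\epsilon$; translating back via $\rho_0(g^{-1})$ yields only $\dvis(b,\rho_0(g^{-1})\rho(g)\theta)<\epsilon$, which says nothing about $\dvis(b,\theta)$ once $g$ lies outside the finite set on which $\rho$ and $\rho_0$ are close. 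The paper takes $\gamma_s$ to be a geodesic from $a$ to $b_s$, the actual middle coordinate of a triple realizing $s$ (this is Lemma~\ref{lem:localquasi}). That $a$ is an endpoint of the limiting geodesic $\gamma$ is then proved by a separate contradiction argument: if not, one locates a point of $\gamma$ near which $\gamma_s$ heads off toward $a$, producing a point of $S$ farther than $3H+6\delta$ from $\gamma$. The bound $R>C_V+4H+11\delta$ you cite is used for the \emph{other} endpoint (closeness to $\theta$), not for pinning down $a$.

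\textbf{The reverse inclusion needs a fixed-point argument you omit.} To show $\gamma_s\cap B_R(s)\subset N_H(S)$ you must \emph{produce} points of $L_a\cap Y_\theta$, i.e.\ triples $(a,b',c')$ with $f_\theta(a,b',c')=b'$. Finding $c'$ so that $\{a\}\times B_{\epsilon_2}(\theta)\times\{c'\}\subset X$ and observing that $f_{a,c',\theta}$ sends this ball into $B_\epsilon(\theta)$ is not enough; you still need a fixed point. This is exactly Lemma~\ref{lem:fixed}: the contractible set $B_\theta$ from Lemma~\ref{lem:epsilon}\ref{item:Bp} is sandwiched between $B_\epsilon(\theta)$ and $B_{\epsilon_2}(\theta)$, so $f_{a,c',\theta}$ maps $B_\theta$ into itself and the Lefschetz theorem supplies the fixed point $b'$.

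\textbf{The $\tfrac{R}{4}$--connectedness argument does not apply.} The $\kappa$ path property (Lemma~\ref{lem:paths}) varies the \emph{first} coordinate while holding the middle one fixed; points of $L_a\cap Y_\theta$ have the first coordinate fixed at $a$, so this produces no path inside $L_a\cap Y_\theta$. The paper sidesteps connectedness: it applies Lemma~\ref{lem:bi-infinite} to a single $\tfrac{R}{4}$--component $S_0$, shows the resulting geodesic has $a$ as an endpoint, and observes that any other component would yield a second geodesic also ending at $a$, hence $\delta$--close to the first, forcing the two components within $6H+13\delta<\tfrac{R}{4}$ of one another after all.
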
 

In fact, we eventually prove something slightly stronger (Proposition~\ref{prop:gromov_prod}), but the statement and proof requires 
some some set-up. 

\begin{notation} \label{notation:Y}
For $\theta \in \bgam$, we let $Y_\theta = f(X \times \{\theta\}) \subset X \times \bgam$.  
For $a, c, \theta \in \bgam$, we let $f_{a,c,\theta}$ denote the map $x \mapsto f_\theta(a, x, c)$.  This map is defined on all points $x \in \bgam$ such that $(a, x, c) \in X$ and it is continuous on its domain of definition.  
\end{notation} 

Recall  $L_a \cap (\triples \times\{\theta\})$ is nonempty provided $a \neq \theta$.  Our first lemma ensures that $L_a \cap Y_\theta$ is nonempty provided $a$ and $\theta$ are far apart, as follows: 

\begin{lemma} \label{lem:fixed}
For any $(a, \theta, c) \in D_1$, the set $L_a \cap Y_\theta$ is nonempty, and contains a point of the form $((a, b_\theta, c'), b_\theta)$ where $b_\theta \in B_\epsilon(\theta)$, and $c' \in B_\epsilon(c)$; in particular we have $(a, b_\theta, c') \in D_2 \cap X$. 
\end{lemma}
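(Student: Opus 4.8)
\textbf{Proof proposal for Lemma~\ref{lem:fixed}.}

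The plan is to apply the $\kappa$-path-property input baked into the definition of $\mc{U}(V)$ together with the near-flatness of $f$, using a continuity/connectedness argument to locate the desired point. Here is the outline.

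First, given $(a,\theta,c)\in D_1$, I would invoke item~\eqref{item:kappa} of Definition~\ref{def:U(V)} to produce $c'\in B_\epsilon(c)$ such that the whole slice $\{a\}\times B_{\epsilon_2}(\theta)\times\{c'\}$ lies in the interior of $X_\kappa = X$. Consider the continuous map $f_{a,c',\theta}\from B_{\epsilon_2}(\theta)\to\bgam$ defined by $b\mapsto \pi_{\partial G}\!\big(f_\theta(a,b,c')\big)$, where $\pi_{\partial G}$ is projection of $X\times\bgam$ to the $\bgam$ factor; this is well-defined and continuous on $B_{\epsilon_2}(\theta)$ since that slice lies in $X$. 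A point $((a,b,c'),b)$ lies in $L_a\cap Y_\theta$ exactly when $f_{a,c',\theta}(b)=b$, i.e. when $b$ is a fixed point of this self-map (restricted to where it is defined). So the goal reduces to finding a fixed point $b_\theta\in B_\epsilon(\theta)$ of $f_{a,c',\theta}$.

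To find the fixed point I would use near-flatness plus the contractible-balls property. By item~\eqref{item:flat} of Definition~\ref{def:U(V)}, since $(a,b,c')\in D_2\cap X$ for every $b\in B_{\epsilon_2}(\theta)$ (using $\epsilon_1$-control so that $B_{\epsilon_2}(\theta)$ slices of $D_1$-points sit in $D_2$, together with $c'\in B_\epsilon(c)\subset B_{\epsilon_2}(c)$), we get $f_{a,c',\theta}(b)\in B_\epsilon(b)\subset B_\epsilon(\theta)+(\text{small})$; more precisely $\dvis(f_{a,c',\theta}(b),b)<\epsilon$ for all such $b$, hence $f_{a,c',\theta}\big(\overline{B_{\epsilon}(\theta)}\big)\subset B_{2\epsilon}(\theta)\subset B_{\epsilon_2}(\theta)$ (using $\epsilon<\tfrac12\epsilon_2$). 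Now use property~\ref{item:Bp} of Lemma~\ref{lem:epsilon}: there is a closed contractible set $B_\theta$ with $B_\epsilon(\theta)\subset B_\theta\subset B_{\epsilon_2}(\theta)$; since $f_{a,c',\theta}$ moves points of $B_\theta$ a distance less than $\epsilon$ and $B_\epsilon(\theta)\subset B_\theta$, the restriction $f_{a,c',\theta}|_{B_\theta}$ maps $B_\theta$ into $B_{\epsilon_2}(\theta)$; deforming along the contraction of $B_{\epsilon_2}(\theta)$ if necessary (or directly, since a continuous self-map of a set homeomorphic to a closed ball that moves no point far enough to escape has a fixed point by Brouwer), we get a fixed point $b_\theta$. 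The small-displacement bound guarantees $b_\theta\in B_\epsilon(\theta)$. Then set $b_\theta$, note $c'\in B_\epsilon(c)$, and $(a,b_\theta,c')\in D_2\cap X$ as required, giving the point $((a,b_\theta,c'),b_\theta)\in L_a\cap Y_\theta$.

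The main obstacle I anticipate is the fixed-point step: one must be careful that $B_\theta$ (or $B_{\epsilon_2}(\theta)$) is genuinely homeomorphic to a closed $n$-ball so that Brouwer applies — this is exactly why Lemma~\ref{lem:epsilon}\ref{item:Bp} was arranged to give \emph{contractible} (and via the round-metric construction, actually ball-shaped) sets, rather than relying on the possibly-disconnected visual balls. A secondary technical point is bookkeeping the nested constants ($\epsilon<\tfrac12\epsilon_2\le\tfrac12\epsilon_1$, $\epsilon_1<\tfrac12\minsep(D_{1/2})$, and the $D_1\supset B_{\epsilon_1}(\cdot)^3$ containments) to be sure every triple $(a,b,c')$ encountered really lies in $D_2\cap X$ so that near-flatness item~\eqref{item:flat} can be invoked; but this is routine given Lemma~\ref{lem:D01}.
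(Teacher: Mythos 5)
Your overall strategy is the same as the paper's: use item~\eqref{item:kappa} of Definition~\ref{def:U(V)} to produce $c'\in B_\epsilon(c)$ so that $\{a\}\times B_{\epsilon_2}(\theta)\times\{c'\}\subset X$, reduce the problem to finding a fixed point of $f_{a,c',\theta}$ on the contractible set $B_\theta$ from Lemma~\ref{lem:epsilon}.\ref{item:Bp}, and invoke a topological fixed-point theorem. But there is a genuine gap: you have misread the near-flatness condition. Item~\eqref{item:flat} says $f^\rho\bigl((D_2\cap X)\times\{\theta\}\bigr)\subset X\times B_\epsilon(\theta)$, i.e., $f_{a,c',\theta}(b)\in B_\epsilon(\theta)$ for \emph{every} $b$ in the domain; the image lies close to the fixed reference point $\theta$, not close to the varying source point $b$. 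Your assertion ``$\dvis(f_{a,c',\theta}(b),b)<\epsilon$'' is not what the condition gives and is in fact not true in general.

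This matters for two reasons. First, your version does not produce a self-map: if $b\in B_\theta\subset B_{\epsilon_2}(\theta)$ is only known to move a distance $<\epsilon$, its image lies in $B_{\epsilon_2+\epsilon}(\theta)$, which is strictly larger than $B_\theta$. The principle you then invoke --- ``a continuous map of a closed ball that moves no point far enough to escape has a fixed point'' --- is false as stated: the translation $x\mapsto x+\epsilon v$ on a Euclidean ball moves every point a small distance and has no fixed point. Small displacement alone is not enough; one needs an actual self-map (or a boundary condition \`a la Leray--Schauder). Second, even granting a fixed point $b_\theta$, your reading gives no reason for $b_\theta$ to lie in $B_\epsilon(\theta)$: at a fixed point the displacement is zero, so that bound carries no information about the distance from $b_\theta$ to $\theta$. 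With the correct reading everything is immediate: $f_{a,c',\theta}(B_\theta)\subset f_{a,c',\theta}(B_{\epsilon_2}(\theta))\subset B_\epsilon(\theta)\subset B_\theta$ is an honest self-map of the compact contractible set $B_\theta$, the Lefschetz fixed-point theorem (the paper's choice, which covers the contractible-but-possibly-not-obviously-a-ball case) yields a fixed point $b_\theta$, and $b_\theta$ lies in the image, hence in $B_\epsilon(\theta)$, automatically.
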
  

\begin{proof} 
By Item \eqref{item:kappa} of Definition~\ref{def:U(V)}, there exists a point $c' \in B_\epsilon(c)$ 
such that $\{a\}\times B_{\epsilon_2}(\theta)\times \{c'\} \subset X$.  
 Thus $f_{a,c',\theta}$ is defined on $B_{\epsilon_2}(\theta)$ and 
Definition~\ref{def:U(V)} item~\eqref{item:flat} implies that $f_{a,c',\theta}(B_{\epsilon_2}(\theta)) \subset B_\epsilon(\theta)$.   Lemma~\ref{lem:epsilon} states that there exists a closed, contractible set $B_\theta$ such that 
   \[ B_{\epsilon}(\theta) \subset B_\theta \subset B_{\epsilon_2}(\theta) .\]
   Thus,
   \[f_{a,c',\theta}(B_\theta) \subset f_{a,c',\theta}(B_{\epsilon_2}(\theta))\subset B_\epsilon(\theta) \subset B_\theta.\] By the Lefschetz fixed point theorem, $f_{a,c',\theta}$ has a fixed point $b_\theta\in B_{\epsilon}(\theta)$, which means exactly that $((a, b_\theta, c'), b_\theta) \in L_a \cap Y_\theta$.  Recall that $\epsilon < \epsilon_2$, so by our choice of sets and constants as in Lemma \ref{lem:D01}, we have also $(a, b_\theta, c') \in D_2$.  
\end{proof}

\begin{notation}
Going forward, we define 
  $S(a,\theta) := \bar \pi (L_a\cap Y_\theta) \subset \Gamma$ 
\end{notation}
Our goal is to show that, whenever $S(a,\theta)$ is nonempty, it is bounded
Hausdorff distance from a bi-infinite geodesic with one endpoint equal to $a$.   The following lemma gives a local estimate.  
 Recall we have fixed $H = \max\{2\delta,Q(3\delta)\}+1$, where $Q(\cdot)$ is the function from Lemma~\ref{lem:diambound}.
 
\begin{lemma}
  \label{lem:localquasi}
  Suppose there is a point $((a,b_0,c_0), b_0)$ in $\sigma(D_1)\cap L_a\cap Y_\theta$.  
  Let $\gamma$ be a geodesic from $a$ to $b_0$ in $\Gamma$.  Then
  \[ S(a,\theta)\cap B_R(\idG)\subset N_H(\gamma),\mbox{ and } \gamma\cap B_R(\idG)\subset N_H(S(a,\theta)).\]
\end{lemma}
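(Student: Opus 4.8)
The plan is to unwind all the relevant definitions and then compare the section $Y_\theta = f(X\times\{\theta\})$ with the ``flat'' section $X\times\{\theta\}$, using the near-flatness property~\eqref{item:flat} of $\mathcal U(V)$ together with the $\epsilon$-smallness estimates set up in Lemma~\ref{lem:epsilon}. The overall point is that near $\idG$ (inside $B_R(\idG)$) the map $f$ has barely moved anything, so $L_a\cap Y_\theta$ agrees coarsely with $L_a\cap(\triples\times\{\theta\})$, whose projection is already known to be $\delta$-Hausdorff-close to a geodesic from $a$ to $\theta$ (as recalled right before Proposition~\ref{prop:quasi-geodesic}); and $\theta$ is $\epsilon$-close to $b_0$, so that geodesic is also coarsely $\gamma$.

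\medskip

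\textbf{Step 1: Containment $S(a,\theta)\cap B_R(\idG)\subset N_H(\gamma)$.} Take a point $((a,b,c),b)\in L_a\cap Y_\theta$ whose projection $\pi(a,b,c)$ meets $B_R(\idG)$; I want to show $\pi(a,b,c)\subset N_H(\gamma)$, or at least that the part of it inside $B_R(\idG)$ is. Since $((a,b,c),b)\in Y_\theta$, there is $x\in X$ with $f(x,\theta)=((a,b,c),b)$; because $f$ covers the identity on $X$ (property~(1) of Proposition~\ref{prop:good_map}), $x=(a,b,c)$, so $(a,b,c)\in X$ and $f((a,b,c),\theta)=((a,b,c),b)$. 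Now I claim $(a,b,c)$ (or at least the relevant part of its projection) forces $b$ close to $\theta$: if $\pi(a,b,c)\cap B_R(\idG)\neq\emptyset$ then $(a,b,c)$ lies in $\pi^{-1}(B_R(\idG))\subset D_1\subset D_2$, and by near-flatness~\eqref{item:flat}, $f((a,b,c),\theta)\in B_\epsilon(\theta)$, i.e.\ $\dvis(b,\theta)<\epsilon$. I also have, by hypothesis of the lemma, that $(a,b_0,c_0)\in D_1$ with $b_0$ the corresponding fixed point, and by Lemma~\ref{lem:fixed}-type reasoning $\dvis(b_0,\theta)<\epsilon$ — actually $b_0$ is just given, but the same near-flatness argument applied to $(a,b_0,c_0)\in D_1\subset D_2$ gives $\dvis(b_0,\theta)<\epsilon$, so $\dvis(b,b_0)<2\epsilon$. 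By Lemma~\ref{lem:epsilon}\ref{item:geodesicsmiss}, any geodesic between two points at visual distance $\le 2\epsilon$ stays $10\delta$-far from $B_R(\idG)$; hence inside $B_R(\idG)$ a geodesic $[a,b]$ and a geodesic $[a,b_0]$ fellow-travel (the triangle $a,b,b_0$ is $\delta$-thin and the $[b,b_0]$ side avoids $B_R(\idG)$), so on $B_R(\idG)$ we get $\dgam([a,b],[a,b_0])\le\delta$. Combining with Remark~\ref{rem:projection} (every point of $\pi(a,b,c)$ is within $\delta$ of every geodesic with endpoints among $a,b,c$, so within $\delta$ of $[a,b]$), the part of $\pi(a,b,c)$ in $B_R(\idG)$ lies in $N_{2\delta}([a,b_0])=N_{2\delta}(\gamma)\subset N_H(\gamma)$ since $H>2\delta$. (One must be slightly careful whether $x\in\pi(a,b,c)$ itself lies in $B_R(\idG)$ or merely $\pi(a,b,c)$ meets it; use $\diam(\pi(D_0))$ and the slack $52\delta$ built into the bound~\eqref{eq:R} to absorb this, together with Lemma~\ref{lem:diambound} — this is the source of the $Q(3\delta)$ in the definition of $H$.)

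\medskip

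\textbf{Step 2: Containment $\gamma\cap B_R(\idG)\subset N_H(S(a,\theta))$.} Here I start with a point $z\in\gamma=[a,b_0]$ with $z\in B_R(\idG)$ and need to produce a point of $S(a,\theta)$ within $H$ of $z$. By Lemma~\ref{lem:delta}\ref{item:ontoline} applied to the geodesic $\gamma$ from $a$ to $b_0$, there is $c\in\bgam\setminus\{a,b_0\}$ with $z\in\pi(a,b_0,c)$; moreover $\pi(a,b_0,c)$ is uniformly bounded (Lemma~\ref{lem:diambound}) and meets $B_R(\idG)$, so $(a,b_0,c)\in\pi^{-1}(B_R(\idG))\subset D_1$ — but $(a,b_0,c)$ need not lie in $X$, nor need $b_0$ be a fixed point of $f_{a,c,\theta}$. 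So instead I invoke Lemma~\ref{lem:fixed} directly: since $(a,b_0,c)\in D_1$ (or rather, taking $\theta$ and the given data), Lemma~\ref{lem:fixed} produces $b_\theta\in B_\epsilon(\theta)$ and $c'\in B_\epsilon(c)$ with $((a,b_\theta,c'),b_\theta)\in L_a\cap Y_\theta$, hence $\bar\pi$ of this point is in $S(a,\theta)$, and $(a,b_\theta,c')\in D_2$. Now $\dvis(b_\theta,b_0)<2\epsilon$ (both within $\epsilon$ of $\theta$, using $\dvis(b_0,\theta)<\epsilon$ from Step~1). Arguing as in Step~1, inside $B_R(\idG)$ the geodesic $[a,b_0]$ and $[a,b_\theta]$ are within $\delta$; and $z$ being on $[a,b_0]$ while $\pi(a,b_\theta,c')$ shadows $[a,b_\theta]$ up to $\delta$ (Remark~\ref{rem:projection}), I want $z$ within $\delta+\delta = 2\delta$ of $\pi(a,b_\theta,c')$. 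The subtlety is the reverse direction of Remark~\ref{rem:projection}: $\pi(a,b_\theta,c')$ need not contain all of $[a,b_\theta]$ near $z$. This is exactly where Lemma~\ref{lem:diambound} and the $Q(3\delta)$ in $H$ come in — one shows that the relevant ``middle'' part of $[a,b_\theta]$ (away from the Gromov-product regions near the three ideal vertices, which lie outside $B_R(\idG)$ by~\eqref{item:geodesicsmiss} and the size bound~\eqref{eq:R}) is contained in $\pi(a,b_\theta,c')$ or within $Q(3\delta)$ of it; hence $z\in N_H(S(a,\theta))$ with $H=\max\{2\delta,Q(3\delta)\}+1$.

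\medskip

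\textbf{Main obstacle.} The routine parts are the $\delta$-thin-triangle fellow-traveling and the bookkeeping of constants; the genuinely delicate point is Step~2's reverse containment — showing that a given interior point $z$ of the geodesic $\gamma$ is actually \emph{near the projection} $\pi(a,b_\theta,c')$, not merely near the geodesic $[a,b_\theta]$. This requires knowing that $z$ is far (distance $\ge R$, hence $\gg$ the Gromov products $\gprod{a}{c'}{\cdot}$, $\gprod{b_\theta}{c'}{\cdot}$ of the triangle, which are controlled because those geodesics to $c'$ avoid $B_R(\idG)$) from the ``thin parts'' of the ideal triangle where the three geodesics diverge, so that $z$ genuinely lies in the common $\delta$-neighborhood $\bigcap_\gamma N_{\delta}(\gamma)$ defining $\pi$. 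Managing this cleanly — i.e.\ verifying that being in $B_R(\idG)$ keeps $z$ uniformly away from the bad regions, using~\eqref{eq:R} and Lemma~\ref{lem:epsilon}\ref{item:geodesicsmiss} — is the crux, and is precisely what the carefully engineered lower bound on $R$ is designed to handle.
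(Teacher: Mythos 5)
Your overall strategy matches the paper's: unwind the definition of $Y_\theta$, use near-flatness to pin every middle coordinate into $B_\epsilon(\theta)$, then fellow-travel via $\delta$-thin triangles and Lemma~\ref{lem:epsilon}\ref{item:geodesicsmiss}. Step~1 is essentially the paper's argument (your parenthetical worry about whether $p$ or merely $\pi(a,b,c)$ meets $B_R(\idG)$ is a non-issue, since $p\in S(a,\theta)\cap B_R(\idG)$ is already a point of $\pi(a,b,c)$ inside the ball; the $Q(3\delta)$ term in $H$ has nothing to do with this step). Step~2, however, has two genuine gaps.

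First, you cannot ``invoke Lemma~\ref{lem:fixed} directly.'' That lemma needs $(a,\theta,c)\in D_1$ with the actual $\theta$ as middle coordinate. What \ref{item:ontoline} gives you is $(a,b_0,c)\in D_1$. Although $b_0\in B_\epsilon(\theta)$, nothing in the construction guarantees that $D_1$ is stable under moving the middle coordinate by $\epsilon$ (only $D_2$ contains $\epsilon_2$-balls around $D_1$). The paper sidesteps this by re-running the Lefschetz argument with the triple $(a,b_0,c)$: apply Definition~\ref{def:U(V)}\eqref{item:kappa} to $(a,b_0,c)$ to get $c'$ with $\{a\}\times B_{\epsilon_2}(b_0)\times\{c'\}\subset X\cap D_2$, note $b_0\in B_\epsilon(\theta)\subset B_\theta$, and then use near-flatness to see $f_{a,c',\theta}$ maps $B_\theta$ into $B_\epsilon(\theta)$, giving the fixed point $b'$. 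This is ``as in the proof of Lemma~\ref{lem:fixed}'' rather than an application of it.

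Second, the resolution you propose for the ``main obstacle'' does not hold up. You suggest that the regions where the sides of the ideal triangle $(a,b',c')$ diverge lie outside $B_R(\idG)$, but in fact the opposite is true: those regions are precisely where $\pi(a,b',c')$ sits, which is near $z\in B_R(\idG)$ (indeed $[a,c']$ passes within about $2\delta$ of $z$). The argument that actually closes the gap is to bound the distance from $z$ directly to each of the three sides of the new triangle — within $2\delta$ of $[a,b']$ (triangle $a,b_0,b'$ with short side $[b_0,b']$ avoiding $B_R(\idG)$), within $2\delta$ of $[a,c']$ (triangle $a,c,c'$ with short side $[c,c']$ avoiding $B_R(\idG)$, using $z\in\pi(a,b_0,c)$), and within $3\delta$ of $[b',c']$ (quadrilateral $b_0,c,c',b'$) — which places $z\in\pi_{3\delta}(a,b',c')$. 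Since any $q\in\pi(a,b',c')$ also lies in $\pi_{3\delta}(a,b',c')$, Lemma~\ref{lem:diambound} gives $\dgam(z,q)\le Q(3\delta)<H$. The missing idea is to show that $z$ itself belongs to the coarser projection $\pi_{3\delta}$ and use its diameter bound, rather than trying to show the geodesic near $z$ is covered by $\pi_\delta$.
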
 
 
\begin{proof}
  For the first inclusion, suppose that $p$ lies in $S(a,\theta)\cap B_R(\idG)$.  Then, since $D_1$ contains $\pi^{-1}(B_R(\idG))$,  we have $p \in \overline{\pi}(f((D_1 \cap X)\times\{\theta\})\cap L_a)$.  In particular, there exist $b,c\in \bgam$, such that all the following hold simultaneously: 
  \begin{enumerate}
  \item $p\in \pi(a,b,c)$,
  \item $(a,b,c)\in D_1 \cap X$,
  \item $f_\theta(a,b,c) = b$.    
  \end{enumerate}
  Both $(a,b,c)$ and $(a,b_0,c_0)$ lie in $(D_1 \cap X) \subset (D_2 \cap X)$.  By item \eqref{item:flat} in Definition \ref{def:U(V)}, we conclude 
  that $b = f_\theta(a,b,c)$ and $b_0 = f_\theta(a,b_0,c_0)$ both lie in $B_\epsilon(\theta)$.  In particular $\dvis(b,b_0) < 2\epsilon$, so any geodesic from $b$ to $b_0$ misses $B_R(\idG)$ by at least $10\delta$ (Lemma~\ref{lem:epsilon}.\ref{item:geodesicsmiss}).  By $\delta$--slimness of ideal triangles this implies
  that if $[a,b]$ is any geodesic joining $a$ to $b$, then $[a,b]\cap B_{R+8\delta}(\idG)$ lies in the $\delta$--neighborhood of $\gamma$ (the geodesic joining $a$ to $b_0$).  By definition of $\pi$, the point $p$ lies within $\delta$ of a point on $[a,b]$ which is at most $R+\delta$ from $\idG$, so lies within $2\delta$ of $\gamma$.  This shows the first inclusion.

  For the second inclusion suppose that $p\in \gamma\cap B_R(\idG)$.  By assumption~\ref{item:ontoline} on $\delta$, there is some $c$ so that $p\in \pi(a,b_0,c)$.  Since $D_1$ contains $\pi^{-1}(B_R(\idG))$, we have $(a,b_0,c)\in D_1$. 
  
By Item \eqref{item:kappa} of Definition~\ref{def:U(V)}, there exists $c' \in B_\epsilon(c)$ such that 
 \[ \{a\}\times B_{\epsilon_2}(b_0)\times\{c'\} \subset (X \cap D_2).\]  
Let $B = B_\theta$ be the contractible set from Lemma~\ref{lem:epsilon}.\ref{item:Bp}.  This set contains $B_\epsilon(\theta)$, so in particular $b_0\in B$.  We have $\{a\}\times B\times \{c'\}\subset X \cap D_2$, so
as in the proof of Lemma~\ref{lem:fixed},
$f_\theta$ induces a map $f_{a,c',\theta}\from B\to B_{\epsilon}(\theta)\subset B$, and $f_{a,c',\theta}$ fixes some $b'\in B_\epsilon(\theta)\subset B_{2\epsilon}(b_0)$.  The point $((a,b',c'),b')$ is therefore in $L_a\cap Y_\theta$, so $\pi(a,b',c')$ is a subset of $S(a,\theta)$.  Let $q\in \pi(a,b',c')$.
\begin{figure}[htbp]
  \labellist 
  \hair 2pt
\pinlabel $b_0$ at 315 260
\pinlabel $b'$ at 290 290
\pinlabel $c$ at 100 315
\pinlabel $c'$ at 80 300
\pinlabel $\gamma$ at 285 230
\pinlabel $q$ at 165 175
\pinlabel $p$ at 225 175
\pinlabel $a$ at 73 55
\pinlabel $B_R(\idG)$ at 195 100
   \endlabellist
   \centerline{ \mbox{
 \includegraphics[width = 2.3in]{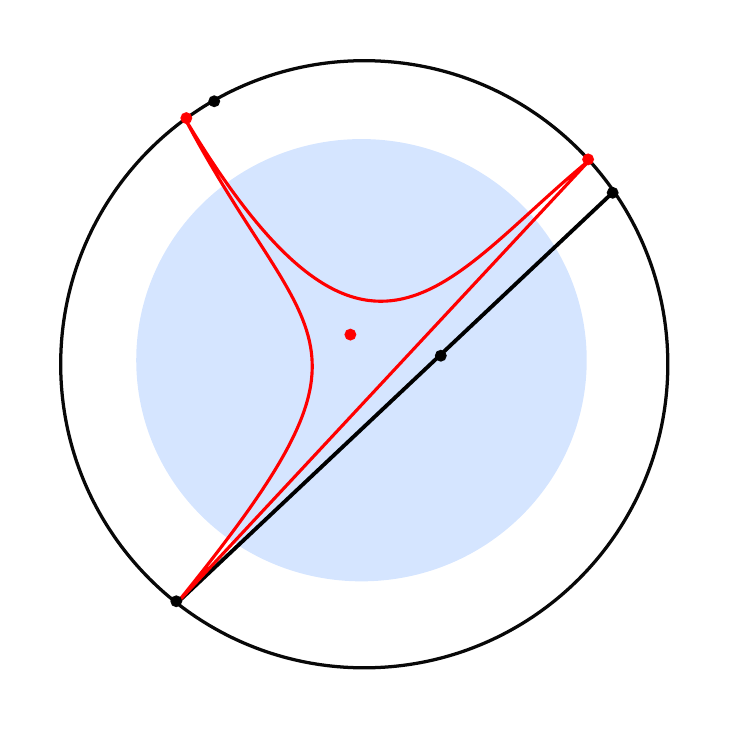}}}
 \caption{$p \in \pi(a,b,c)$ is close to any point $q$ of $\pi(a, b', c')$}
  \label{fig:neighborhood}
\end{figure} 
See Figure~\ref{fig:neighborhood}.

 We claim $q$ is close to $p$. 
To see this, we first show $p$ is within $2\delta$ of any geodesic from $a$ to $b'$. 
Fix such a geodesic $[a,b']$, and geodesics $[b_0, b']$ and $[b_0, a]$.  Since $p \in B_R(\idG)$, it lies distance at least $10 \delta$ from any point on $[b_0, b]$.  Also we have that $p$ lies within $\delta$ of some point on $[b_0,a]$, so by $\delta$-thinness, $p$ is distance at most $2\delta$ from a point on $[a,b']$.   

Repeating this argument with $c'$ in place of $b'$ shows that $p$ is within $2\delta$ of any geodesic from $a$ to $c'$.  A slight modification shows that $p$ is within $3\delta$ of any geodesic $[b',c']$, as follows:  Considering a quadrilateral with sides $[b_0, c]$, $[c, c']$, $[b', c']$ and $[b_0, b']$, we know that $p$ is within $\delta$ of some point on $[b_0, c]$ and this must lie within $2\delta$ of a point on $[b', c']$ since the other two sides are each distance at least $10\delta$ from $p$.  
Thus, $p \in \pi_{3\delta}(a,b',c')$, and of course $q$ lies in this set as well. 
 
 By Lemma~\ref{lem:diambound}, $\pi_{3\delta}(a,b,c)$ has diameter at most $Q(3\delta)$.  In particular $\dgam(p,q)\le Q(3\delta)$.  Since $q\in S(a,\theta)$, this shows the second inclusion.
\end{proof}

We will now combine this work with Lemma~\ref{lem:bi-infinite} to prove Proposition~\ref{prop:quasi-geodesic}.  We will actually prove the following slightly stronger statement.  

\begin{proposition} \label{prop:gromov_prod}
  If $S(a,\theta) \neq \emptyset$, then  $S(a,\theta)$ is Hausdorff distance less than $3H+6\delta+1$ from a geodesic $\gamma$ with one endpoint at $a$.

  If in addition $L_a\cap Y_\theta\cap \sigma(D_0)$ is non-empty, then $\gamma$ joins $a$ to a point $e^+$ so that $\gprod{e^+}{\theta}{\idG} \ge R-(4H + 11\delta)$. 
\end{proposition}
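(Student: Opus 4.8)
The plan is to assemble the local estimates of Lemma~\ref{lem:localquasi} into a global one via the machinery of Lemma~\ref{lem:bi-infinite}, and then track the endpoint. First I would show that $S(a,\theta)$ is $\frac{R}{4}$--connected. The natural source of connectedness is that $L_a \cap Y_\theta$ is connected: $Y_\theta = f(X\times\{\theta\})$ is a homeomorphic image of $X$, and intersecting with the leaf $L_a$ (the $b$--coordinate of the triple) should leave something connected or at least coarsely connected on the relevant scale; since $\bar\pi$ has uniformly bounded point-preimages on bounded sets (Lemma~\ref{lem:diambound} and Lemma~\ref{lem:boundedK}) and is ``coarsely continuous,'' the image $S(a,\theta)$ inherits $\frac{R}{4}$--connectedness once $R$ is large enough — this is where the hypothesis $R > 24H + 52\delta + \diam(\pi(D_0))$ will get used, the $\diam(\pi(D_0))$ term controlling the coarseness of $\bar\pi$ on a fundamental domain while equivariance spreads the estimate everywhere. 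I would then check the two local inclusions required by Lemma~\ref{lem:bi-infinite}: for each $s \in S(a,\theta)$ we need a bi-infinite geodesic $\gamma_s$ with $S(a,\theta)\cap B_R(s)\subset N_H(\gamma_s)$ and $\gamma_s\cap B_R(s)\subset N_H(S(a,\theta))$. By $G$--equivariance of $f$ and $\pi$ it suffices to verify this when $s$ is in (a bounded neighborhood of) $\pi(D_0)$, i.e. within $B_R(\idG)$ after translating; translating $s$ to a controlled position puts the relevant triple into $D_1$ (since $D_1 \supset \pi^{-1}(B_R(\idG))$), so Lemma~\ref{lem:localquasi} applies with the geodesic $\gamma_s$ a translate of a geodesic from $a$ to some $b_0$. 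The numerology $R > 24H + 16\delta$ needed for Lemma~\ref{lem:bi-infinite} is subsumed in \eqref{eq:R}.

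Given these verifications, Lemma~\ref{lem:bi-infinite} produces an oriented bi-infinite geodesic $\gamma$ with $\dHaus(\gamma, S(a,\theta)) \le 3H + 6\delta$; enlarging by $1$ absorbs the translation/quantization slack, giving the bound $3H + 6\delta + 1$ in the statement. To see that one endpoint of $\gamma$ is $a$: each local geodesic $\gamma_s$ runs from $a$ to a point $b_0 \in B_\epsilon(\theta)$, so all the $\gamma_s$ share the endpoint $a$; the second conclusion of Lemma~\ref{lem:bi-infinite} says $\gamma$ can be oriented so that $\gprod{\gamma^{+\infty}}{\gamma_s^{+\infty}}{s}$ and $\gprod{\gamma^{-\infty}}{\gamma_s^{-\infty}}{s}$ are both at least $R - (4H+10\delta)$, which is positive and large, so one of $\gamma^{\pm\infty}$ is forced to coincide with the common endpoint $a$ of all the $\gamma_s$ (using that the $\gamma_s$ all have an endpoint within bounded Gromov product of $a$, combined with Lemma~\ref{lem:gprodest}). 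This gives the first paragraph of the proposition.

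For the refined statement, suppose in addition $L_a \cap Y_\theta \cap \sigma(D_0)$ is nonempty, say it contains $((a, b_0, c_0), b_0)$ with $(a,b_0,c_0) \in D_0$. Then $b_0 = f_\theta(a,b_0,c_0) \in B_\epsilon(\theta)$ by Definition~\ref{def:U(V)}\eqref{item:flat}, and Lemma~\ref{lem:localquasi} applies directly at $\idG$ (no translation needed), with $\gamma_{s_0}$ a geodesic from $a$ to $b_0$ running through a bounded neighborhood of $\idG$ since $\idG \in \pi(D_0)$ up to the $\diam(\pi(D_0))$ slack. Taking $s = $ the point of $S(a,\theta)$ nearest $\idG$ in the second conclusion of Lemma~\ref{lem:bi-infinite}, we get $\gprod{\gamma^{+\infty}}{\gamma_s^{+\infty}}{s} \ge R - (4H+10\delta)$; since $\gamma_s^{+\infty}$ is the endpoint of a geodesic to $\theta$ (up to $\dvis(b_0,\theta) < 2\epsilon$, hence up to bounded Gromov product via \eqref{eq:bilip}), and $s$ is within $3H+6\delta$ of $\idG$, an application of Lemma~\ref{lem:delta}\ref{item:gromovproducts} and the basepoint-change estimate for Gromov products converts this into $\gprod{e^+}{\theta}{\idG} \ge R - (4H+11\delta)$, where $e^+ = \gamma^{+\infty}$ is the endpoint other than $a$. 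The main obstacle I anticipate is the bookkeeping in the first step — establishing $\frac{R}{4}$--connectedness of $S(a,\theta)$ cleanly from connectedness of $L_a \cap Y_\theta$, since $\bar\pi$ is only coarsely defined and one must argue that a continuous path in the leaf maps to a chain of $\pi$--images with controlled jumps; everything after that is a matter of carefully chasing the constants fixed in Lemma~\ref{lem:D01} through Lemmas~\ref{lem:bi-infinite}, \ref{lem:gprodest}, and \ref{lem:localquasi}.
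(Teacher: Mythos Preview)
The central gap is your first step: there is no reason to expect $L_a\cap Y_\theta$ to be connected, even coarsely.  Unwinding the definitions, $\sigma^{-1}(L_a\cap Y_\theta)$ is the set of $(a,b,c)\in X$ with $f_\theta(a,b,c)=b$, i.e.\ a fixed-point locus for the family of maps $f_{a,c,\theta}$.  Fixed-point sets of continuous self-maps are typically disconnected, and nothing in the setup rules this out here.  Your appeal to ``connectedness of $Y_\theta$ intersected with a leaf'' and ``coarse continuity of $\bar\pi$'' does not address this, and the diameter bound on $\pi(D_0)$ controls the coarseness of $\pi$ on a single triple, not the jumps between different triples in the fixed-point set.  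So the hypothesis of Lemma~\ref{lem:bi-infinite} is not verified and the argument stalls.

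The paper sidesteps this entirely.  It restricts to the vertex set $S=S(a,\theta)\cap G$ (this, not ``quantization slack,'' is the source of the extra $+1$), verifies the local hypotheses of Lemma~\ref{lem:bi-infinite} via equivariance and Lemma~\ref{lem:localquasi} exactly as you suggest, and then applies Lemma~\ref{lem:bi-infinite} to an \emph{arbitrary} $\tfrac{R}{4}$--connected component $S_0$.  The key extra idea is to prove first that the resulting geodesic has an endpoint at $a$ --- done by a direct distance contradiction: if not, one finds a point on $\gamma_s$ (which does end at $a$) deep inside $B_R(s)$, hence near $S_0$, yet too far from $\gamma$.  Only \emph{after} knowing every component yields a geodesic ending at $a$ does connectedness follow: two such geodesics share the endpoint $a$, hence come within $\delta$ of each other, hence their components come within $6H+13\delta<\tfrac{R}{4}$ of each other and must coincide.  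Your Gromov-product approach to identifying the endpoint $a$ could be made to work with some care (take $s$ near $\gamma^{-\infty}$ and observe $\gamma_s$ and $\gamma$ would be $\delta$--close there), but it still needs the component-by-component strategy to get off the ground.
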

\begin{proof}
  We first verify the hypotheses of Lemma~\ref{lem:bi-infinite} for
  the set $S = S(a,\theta)\cap G$.  (Recall that $G$ is canonically identified with the vertices of $\Gamma$.)  By Corollary~\ref{cor:deltaOK}, $\pi$ is surjective, so for each $s\in \mathcal{S}$ we can choose some $b_s, c_s$ so that $\sigma(a,b_s,c_s) \in L_a\cap Y_\theta$ and $s\in \pi(a,b_s,c_s)$.   Since $\pi^{-1}(\idG)\subset D_0$, the point $\sigma(a,b_s,c_s)$ lies in $\sigma(s\cdot D_0)\cap L_a\cap Y_\theta$.  Left translating by $s^{-1}$, we have 
  \[ (s^{-1}a,s^{-1}b_s,s^{-1}c_s)\in \sigma(D_0)\cap L_{s^{-1}a}\cap Y_{\rho(s^{-1})\theta}.\]  
  Let $\hat{\gamma}_s$ be any bi-infinite geodesic from $s^{-1}a$ to $s^{-1}b_s$.  Lemma~\ref{lem:localquasi} implies
  \[ s^{-1}S\cap B_R(\idG)\subset N_H(\hat{\gamma}_s),\mbox{ and } \hat{\gamma}_s\cap B_R(\idG)\subset N_H(S).\]
  Setting $\gamma_s$ equal to $s \cdot \hat{\gamma}_s$, we find that
  \[ S\cap B_R(s)\subset N_H(\gamma_s),\mbox{ and } \gamma_s\cap B_R(s)\subset N_H(S).\]

  Now fix any $\frac{R}{4}$--connected component $S_0$ of $S$ so that we may apply Lemma~\ref{lem:bi-infinite}.  (We will see later that $S$ is $\frac{R}{4}$--connected, so in fact $S_0 = S$.)  The paragraph above shows that all the hypotheses of Lemma~\ref{lem:bi-infinite} hold for $S_0$ so we conclude that there is a bi-infinite geodesic $\gamma$ with $\dHaus(\gamma,S_0) \le 3H + 6\delta$.

  We next claim that one of the endpoints of $\gamma$ is $a$.  We argue by contradiction.  Using $\delta$--slimness of ideal triangles, there is a point $p\in \gamma$ so that $p$ is within $2\delta$ of any geodesic joining $a$ to any endpoint of $\gamma$.  Since $S_0$ is Hausdorff distance at most $3H + 6\delta$ from $\gamma$, there is some $s\in S_0$ so that $\dgam(s,p)\le 3H + 6\delta$.  Now $\gamma_s$ has an endpoint at $a$, and we have $\gamma_s\cap B_R(s)\subset N_H(S)$.   
  From the inclusions $S\cap B_R(s)\subset N_H(\gamma_s),\mbox{ and } \gamma_s\cap B_R(s)\subset N_H(S)$ and the inequality $6H < \frac{R}{4}$, we conclude that 
  $N_H(\gamma_s) \cap S \subset S_0$.  Choosing $x\in \gamma_s$ at distance $\frac{R}{2}$ from $s$, in the direction of $a$, we find some point $s'\in S_0$ with $\dgam(x,s')\le H$.  See Figure~\ref{fig:estimates} for a schematic.  By repeated applications of the triangle inequality, one can easily show that this $s'$ is further than $3H + 6\delta$ from $\gamma$, a contradiction.

\begin{figure}
  \labellist 
  \hair 2pt
\pinlabel $\gamma$ at 315 250
\pinlabel $\gamma_s$ at 95 310
\pinlabel $s$ at 180 220
\pinlabel $p$ at 205 189
\pinlabel $s'$ at 238 150
\pinlabel $a$ at 280 45
\pinlabel $x'$ at 195 130
\pinlabel $B_R(s)$ at 210 260
   \endlabellist
   \centerline{ \mbox{
 \includegraphics[width = 2.3in]{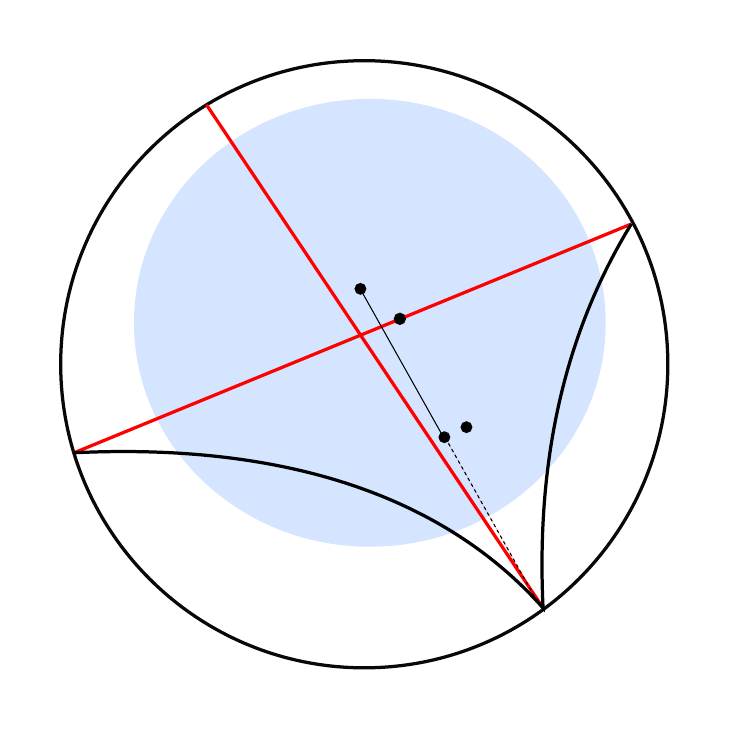}}}
 \caption{$\gamma$ and $\gamma_s$ should both be close to $S_0$ on the shaded region $B_R(s)$, giving a contradiction if $a$ is not an endpoint of $\gamma$.}
  \label{fig:estimates}
\end{figure}

We now argue that $S = S_0$.  To see this, suppose that $S_1$ were some other component.  We may apply the same argument to $S_1$ to produce a bi-infinite geodesic $\gamma_1$.  The geodesics $\gamma$ and $\gamma_1$ share an endpoint $a$, and so contain points within $\delta$ of one another.  This implies that $S_0$ and $S_1$ contain points within $6H + 13\delta < \frac{R}{4}$ of one another, so they cannot be different $\frac{R}{4}$--connected components.  

  We have established the first conclusion, since $\dHaus(S,S(a,\theta))\le 1$.

  Now we suppose that $s\in S$ is in $\bar\pi(\sigma(D_0)\cap L_a\cap Y_\theta)$.  By Lemma~\ref{lem:fixed},  we may take $s$ to be in $\bar \pi((a,b,c),b)$ for some $b\in B_\epsilon(\theta)$.  We can therefore take $\gamma_s$ in the first part of this argument to be a geodesic joining $a$ to $b$.  The second conclusion of Lemma~\ref{lem:bi-infinite} implies that $\gprod{e^+}{b}{s}\ge R-(4H + 10\delta)$, where $e^+$ is the endpoint of $\gamma$ which is not equal to $a$.  We thus have
  \begin{align*}
    \gprod{e^+}{b}{\idG} & \ge \gprod{e^+}{b}{s} - \dgam(\idG,s)\\
                      & \ge \gprod{e^+}{b}{s} - \diam (\pi(D_0)) \\
                      & \ge R - (4H + 10\delta)
  \end{align*}

  The first condition on $\epsilon$ in Lemma~\ref{lem:epsilon} implies that $\gprod{b}{\theta}{\idG}\ge R$.  We have
  \begin{align*}
    \gprod{e^+}{\theta}{\idG}& \ge \min\{ \gprod{e^+}{b}{\idG}, \gprod{b}{\theta}{\idG}\} - \delta\\
                          & \ge R - (4H + 11\delta),
  \end{align*}
  establishing the last claim of the Proposition.
\end{proof}

\section{The endpoint map} \label{sec:endpoint}
To summarize the results of the previous section, for each pair $a, \theta \in \partial G \times \partial G$ so that $L_a \cap Y_\theta \neq \emptyset$, there is a geodesic in $\Gamma$ at (uniformly) bounded Hausdorff distance from $S(a,\theta) = \overline{\pi} (L_a \cap Y_\theta)$, with one endpoint equal to $a$.  Say this geodesic is {\em shadowed} by $S(a, \theta)$, and orient it so that the negative endpoint is $a$.  Any two bi-infinite geodesics shadowed by $S(a,\theta)$ are bounded Hausdorff distance from each other, so they have the same endpoints in $\bgam$.
This gives us positive and negative ``endpoint maps'' $e^+$ and $e^-$ assigning to each pair $(a, \theta)$ where $L_a \cap Y_\theta \neq \emptyset$ the positive and negative endpoints of the shadowed geodesic.  For any such $(a,\theta)$, we have $e^-(a, \theta) = a$ and $e^+(a, \theta) \neq a$.     
Furthermore, the equivariance property in Proposition~\ref{prop:good_map} implies that for any $g \in G$, we have 
\[ g S(a,\theta) = S(g a, \rho(g) \theta ) = \overline{\pi} (L_{ga} \cap Y_{\rho(g) \theta}). \]
This implies the following equivariance of the positive endpoint map.
\begin{equation} \label{eq:equivariance}
 g \cdot e^+(a, \theta) = e^+(ga, \rho(g) \theta). 
\end{equation}
Here and in what follows, we will frequently omit $\rho_0$ from the notation when it is clear that we are referring to the natural action of $G$ on its boundary.  Thus, we will write $ga$ rather than $\rho_0(g)a$ when $a$ is a boundary point.  For $x \in X$, we also write $gx$ for its image under the standard action of $g \in G$ on $X \subset \triples$.  

We will first establish continuity of the positive endpoint map on a large set, then use it to define a semi-conjugacy.

\subsection{Continuity}

\begin{proposition}[Continuity in $a, \theta$ over $D_1$] \label{prop:cont_endpt}
Suppose $a_0, \theta_0 \in \bgam \times \bgam$ is such that there exists $c$ with $(a_0, \theta_0, c) \in D_1$.  Then the map $(a, \theta) \mapsto e^+(a, \theta)$ is continuous at $(a_0, \theta_0)$.  
\end{proposition}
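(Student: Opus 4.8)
The plan is to establish continuity by controlling the set $S(a,\theta)$ near the basepoint $\idG$: if $(a,\theta)$ is close to $(a_0,\theta_0)$, then $S(a,\theta)$ and $S(a_0,\theta_0)$ should shadow geodesics that stay close to one another on a large ball $B_R(\idG)$, and closeness of geodesics on a ball of radius $R$ forces closeness of their endpoints in the visual metric (with the discrepancy controlled by a function of $R$ that we can make small by taking $R$ large, which is exactly the role of the lower bound $R > C_V + 4H+11\delta$ in \eqref{eq:R}). Since $a$ is always an endpoint, the content is continuity of the $e^+$-endpoint.

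\textbf{Step 1: produce a common point of $S$ near $\idG$.} Since $(a_0,\theta_0,c)\in D_1$ and $D_1$ is open-ish in the sense that it contains $\pi^{-1}(B_R(\idG))$ and products of $\epsilon_1$-balls around $D_{1/2}$, for $(a,\theta)$ near $(a_0,\theta_0)$ we still have a triple $(a,\theta,c)\in D_1$ (shrinking if necessary, using that $D_1$ contains a neighborhood of $(a_0,\theta_0,c)$ because it contains $B_{\epsilon_1}$-products around $D_{1/2}\ni$ a triple near $(a_0,\theta_0,c)$ — more carefully, I would just invoke Lemma~\ref{lem:fixed} directly, whose hypothesis is membership in $D_1$). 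By Lemma~\ref{lem:fixed}, $L_a\cap Y_\theta\ni((a,b_\theta,c'),b_\theta)$ with $b_\theta\in B_\epsilon(\theta)$ and $c'\in B_\epsilon(c)$, and $(a,b_\theta,c')\in D_2\cap X$. Now $\pi(a,b_\theta,c')\subset S(a,\theta)$ is a nonempty set; by Lemma~\ref{lem:localquasi} applied with this point of $\sigma(D_1)\cap L_a\cap Y_\theta$ and $\gamma = [a,b_\theta]$, we get $S(a,\theta)\cap B_R(\idG)\subset N_H(\gamma)$ and $\gamma\cap B_R(\idG)\subset N_H(S(a,\theta))$ — but I actually want this for a geodesic $[a_0,\theta_0]$-like object. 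The cleaner route: Lemma~\ref{lem:localquasi} tells us that $S(a,\theta)$ near $\idG$ shadows a geodesic from $a$ to $b_\theta$, and by Proposition~\ref{prop:gromov_prod} the full geodesic $\gamma(a,\theta)$ shadowed by $S(a,\theta)$ has its far endpoint $e^+(a,\theta)$ satisfying $\gprod{e^+(a,\theta)}{b_\theta}{\idG}\ge R - (4H+11\delta)$, hence (since $b_\theta\in B_\epsilon(\theta)$ and Lemma~\ref{lem:epsilon}.\ref{item:geodesicsmiss} gives $\gprod{b_\theta}{\theta}{\idG}\ge R$) also $\gprod{e^+(a,\theta)}{\theta}{\idG}\ge R - (4H+11\delta)$.

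\textbf{Step 2: feed in the convergence.} Suppose $(a_n,\theta_n)\to(a_0,\theta_0)$ with each in the domain. From Step 1 we get $\gprod{e^+(a_n,\theta_n)}{\theta_n}{\idG}\ge R-(4H+11\delta)$ and $\gprod{e^+(a_0,\theta_0)}{\theta_0}{\idG}\ge R-(4H+11\delta)$. Since $\theta_n\to\theta_0$ in $\bgam$, the Gromov products $\gprod{\theta_n}{\theta_0}{\idG}\to\infty$; combining with \ref{item:gromovproducts} (the $\delta$-inequality for Gromov products) gives $\gprod{e^+(a_n,\theta_n)}{e^+(a_0,\theta_0)}{\idG}\ge R-(4H+12\delta)$ for $n$ large — wait, this is a fixed lower bound, not $\to\infty$, so it only shows $e^+(a_n,\theta_n)$ stays in a fixed small visual ball about $e^+(a_0,\theta_0)$, not that it converges. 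To upgrade to genuine continuity I need to rerun the argument with the roles of $R$ replaced by an arbitrarily large $R'$; but $R$ was fixed. The honest fix is: the lower bound $R-(4H+11\delta) > C_V$ means $e^+(a,\theta)$ lies in the $V$-sized ball about $\theta$ in the sense of \eqref{eq:smallsemiconjugacy}, but that's about the semi-conjugacy, not about continuity. So the real argument for continuity must instead show that $S(a_n,\theta_n)$ converges to $S(a_0,\theta_0)$ in a suitable local sense on arbitrarily large balls — i.e., re-prove Lemma~\ref{lem:localquasi}-type estimates with the radius $R$ replaced by any $R'\le R$ (which is harmless) but also establish that the two sets actually coincide on large balls up to bounded error, using that $f^\rho$ is a homeomorphism covering the identity and that $b_\theta$ depends continuously on the data via the Lefschetz fixed point (actually $b_\theta$ need not be unique, so this is delicate).

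\textbf{The main obstacle.} The crux is precisely that the endpoint map's continuity does not follow from a single fixed-scale estimate: I need that $S(a_n,\theta_n)$ and $S(a_0,\theta_0)$ track the \emph{same} geodesic to within a bounded error on every ball $B_{R'}(\idG)$ for $R'$ possibly much larger than $R$ — which requires knowing that the geodesics $[a_n, b_{\theta_n}]$ and $[a_0, b_{\theta_0}]$ fellow-travel on $B_{R'}(\idG)$, which in turn requires $a_n\to a_0$ and $b_{\theta_n}\to b_{\theta_0}$ (or at least that $b_{\theta_n}$ stays in $B_\epsilon(\theta_n)$ and these balls converge). The resolution I would pursue: fix $R'$; by equivariance translate so we work near a vertex $v$ with $\dgam(\idG,v)\approx R'$; the local Lemma~\ref{lem:localquasi} estimate at $v$ compares $S(a,\theta)$ to a geodesic from $a$ to some $b\in B_\epsilon(\theta)$ with $v\in\pi(a,b,c)$, and continuity of $f^\rho$ (Proposition~\ref{prop:good_map}) together with the defining fixed-point condition $f_\theta(a,b,c)=b$ lets one show that for $(a,\theta)$ near $(a_0,\theta_0)$ any such witnessing $b$ is forced near $\theta$, hence near $\theta_0$, hence the geodesics fellow-travel on $B_{R'}(v)$ and thus their far endpoints have Gromov product $\gtrsim R'$ — and letting $R'\to\infty$ gives $e^+(a_n,\theta_n)\to e^+(a_0,\theta_0)$. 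Getting the uniformity in this last step — that the witnessing triples $(a,b,c)$ realizing points of $S(a,\theta)$ near a far vertex $v$ are controlled as $(a,\theta)$ varies — is the technical heart, and I expect it to lean on the compactness of $D_1$, the openness of $X_\kappa$, and a diagonal/subsequence argument rather than a clean closed-form estimate.
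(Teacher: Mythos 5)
You correctly diagnose the central difficulty: the single-scale estimate $\gprod{e^+(a,\theta)}{\theta}{\idG}\ge R-(4H+11\delta)$ does not improve as $(a,\theta)\to(a_0,\theta_0)$, so it only confines $e^+(a,\theta)$ to a ball of fixed radius around $e^+(a_0,\theta_0)$ rather than proving continuity. Your proposed resolution (translate by equivariance to work at distance $R'$ from $\idG$, show the local comparison geodesic is forced near $[a_0,e^+(a_0,\theta_0)]$ via continuity of $f^\rho$ and stability of the fixed point, then let $R'\to\infty$) is in spirit the same as what the paper does, and you have correctly identified the three ingredients — compactness of $D_1$, openness of $X_\kappa$, and the fixed-point argument — that make it work.

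However, there is a genuine gap: you do not actually carry out this resolution, and the hedging in your last paragraph (``I expect it to lean on \dots a diagonal/subsequence argument rather than a clean closed-form estimate'') shows you had not found the right packaging. The paper isolates the needed statement as Lemma~\ref{lem:cont}: for any $r>0$, there is a neighborhood $N(r)$ of $(a_0,\theta_0)$ so that whenever $(a,\theta)\in N(r)$, the set $B_r(\idG)\cap S(a_0,\theta_0)$ lies in the $\diam(\pi(D_1))$--neighborhood of $S(a,\theta)$. Combined with Proposition~\ref{prop:gromov_prod} (both $S$-sets shadow geodesics within $3H+6\delta+1$), this forces the two geodesics to fellow-travel on $B_r(\idG)$, and letting $r\to\infty$ gives continuity. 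The proof of that lemma is not a diagonal argument: one covers the compact set $\sigma(K)\cap L_{a_0}\cap Y_{\theta_0}$ (with $K$ the closure of $\pi^{-1}(B_r(\idG))$) by finitely many translates $g_iD_0$, translates each back to $D_0$ by $g_i^{-1}$, and observes that the fixed-point construction of Lemma~\ref{lem:fixed} (a compact contractible set $B_{p_i}$ mapped into the open ball $B_\epsilon(p_i)$) is an \emph{open} condition in the parameters $(a,\theta)$, so persists on a neighborhood $N_i$. Intersecting the finitely many $N_i$ gives $N(r)$. Your sketch points in the right direction but stops precisely at the step that carries the proof, which is why it cannot stand as a complete argument. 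A secondary slip: your Step 1 invokes the Gromov-product conclusion of Proposition~\ref{prop:gromov_prod}, but that conclusion requires $L_a\cap Y_\theta\cap\sigma(D_0)\neq\emptyset$, while Lemma~\ref{lem:fixed} under your hypothesis only supplies a point in $\sigma(D_2)$; this needs a further remark (or one restricts attention to the part of the argument that only uses the first conclusion of Proposition~\ref{prop:gromov_prod}, which is what the paper does).
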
 

Proposition~\ref{prop:cont_endpt} will be a quick consequence of the following technical lemma.
\begin{lemma} \label{lem:cont}
  Suppose $L_{a_0} \cap Y_{\theta_0}$ is nonempty.  For any $r>0$, there is a neighborhood $N = N(r)$ of $(a_0,\theta_0)\in \bgam \times \bgam$ so that if $(a , \theta) \in N$, then $B_r(\idG)\cap S(a_0,\theta_0)$ lies in the $\diam(\pi(D_1))$--neighborhood of $S(a,\theta)$. 
\end{lemma}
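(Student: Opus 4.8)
The statement to prove is Lemma~\ref{lem:cont}: given $L_{a_0}\cap Y_{\theta_0}\neq\emptyset$ and $r>0$, find a neighborhood $N$ of $(a_0,\theta_0)$ so that for $(a,\theta)\in N$, $B_r(\idG)\cap S(a_0,\theta_0)$ lies in the $\diam(\pi(D_1))$--neighborhood of $S(a,\theta)$. The idea is that a point $p\in B_r(\idG)\cap S(a_0,\theta_0)$ comes, by definition of $S=\bar\pi(L_{a_0}\cap Y_{\theta_0})$, from a fixed point of some map $f_{a_0,c,\theta_0}$: there are $b,c\in\bgam$ with $p\in\pi(a_0,b,c)$, $(a_0,b,c)\in X$, and $f_{\theta_0}(a_0,b,c)=b$. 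The plan is to perturb this configuration: since $f=f^\rho$ is continuous and covers the identity on $X$, and since the fixed-point data was obtained in the proof of Lemma~\ref{lem:fixed} via the Lefschetz theorem on a contractible set $B_\theta$ with $B_\epsilon(\theta)\subset B_\theta\subset B_{\epsilon_2}(\theta)$, I expect that for $(a,\theta)$ close to $(a_0,\theta_0)$ one can run the same Lefschetz argument to produce a nearby fixed point, giving a point of $L_a\cap Y_\theta$ whose $\bar\pi$-image is within $\diam(\pi(D_1))$ of $p$.

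\textbf{Key steps.} First I would fix $p\in B_r(\idG)\cap S(a_0,\theta_0)$, so $p\in\pi(a_0,b,c)$ with $f_{\theta_0}(a_0,b,c)=b$ and $(a_0,b,c)\in X$; WLOG (enlarging $r$ only increases the set, and $S(a_0,\theta_0)$ can be assumed to meet $B_r(\idG)$) we may also assume $(a_0,b,c)\in D_1$ by enlarging $r$ to exceed $R$ and using that $D_1\supset\pi^{-1}(B_R(\idG))$; more carefully one should use a compact exhaustion argument since $r$ is given and fixed, covering $B_r(\idG)\cap S(a_0,\theta_0)$ by finitely many such local pieces. Second, for each such $p$, the proof of Lemma~\ref{lem:fixed} gives $c'\in B_\epsilon(c)$ with $\{a_0\}\times B_{\epsilon_2}(\theta_0)\times\{c'\}\subset X\cap D_2$ and a fixed point $b'\in B_\epsilon(\theta_0)$ of $f_{a_0,c',\theta_0}$. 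Third — the heart — I would perturb: for $(a,\theta)$ in a small enough neighborhood, the set $\{a\}\times B_{\epsilon_2}(\theta)\times\{c'\}$ still lies in the interior of $X\cap D_2$ (using that membership in $X_\kappa$ and $D_2$ is an open condition and $c'$ was chosen to put the slice in the interior), the near-flatness item~\eqref{item:flat} of Definition~\ref{def:U(V)} still gives $f_{a,c',\theta}(B_{\epsilon_2}(\theta))\subset B_\epsilon(\theta)$, and with $B_\theta$ the contractible set from Lemma~\ref{lem:epsilon}.\ref{item:Bp} we get $f_{a,c',\theta}(B_\theta)\subset B_\theta$, so Lefschetz produces a fixed point $b''\in B_\epsilon(\theta)$. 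This yields $((a,b'',c'),b'')\in L_a\cap Y_\theta$ with $(a,b'',c')\in D_1$ (provided the neighborhood is small enough that $(a,b'',c')$ stays in $D_1$, using $b''\in B_\epsilon(\theta)$, $\theta$ near $\theta_0$, $c'\in B_\epsilon(c)$, and item~\eqref{itm:D1} of Lemma~\ref{lem:D01}). Fourth, I would compare $\bar\pi$-images: both $(a_0,b,c)$ (or rather $(a_0,b',c')$, which shares the fixed point data — one should track that $p\in\pi(a_0,b',c')$, exactly as in the second-inclusion argument of Lemma~\ref{lem:localquasi}) and $(a,b'',c')$ lie in $D_1$, and their coordinates are pairwise close, so a slimness/thinness argument like the one in Lemma~\ref{lem:localquasi} shows any $q\in\pi(a,b'',c')$ is within $\diam(\pi(D_1))$ of $p$; since $q\in S(a,\theta)$, we are done. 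Finally, take $N$ to be the intersection of the finitely many neighborhoods obtained from the compact cover of $B_r(\idG)\cap S(a_0,\theta_0)$.

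\textbf{Main obstacle.} I expect the delicate point to be the bookkeeping that makes the perturbed Lefschetz argument uniform over the (compact, hence finitely coverable) piece $B_r(\idG)\cap S(a_0,\theta_0)$: for each $p$ one needs $c'$ with the slice $\{a\}\times B_{\epsilon_2}(\theta)\times\{c'\}$ in the \emph{interior} of $X\cap D_2$, an open condition that persists under small perturbation of $(a,\theta)$, and one must check that the resulting fixed point $b''$ still gives a triple in $D_1$ so that the comparison estimate $\diam(\pi(D_1))$ applies. None of the individual steps is hard, but coordinating the neighborhoods — ensuring the perturbation is small enough simultaneously for all finitely many $p$'s in the cover, and that membership in both $X_\kappa$ and $D_1$, $D_2$ survives — is where the care is needed; this is also exactly where the foresight built into the elaborate choices of constants and compact sets in Lemmas~\ref{lem:D01} and~\ref{lem:epsilon} pays off.
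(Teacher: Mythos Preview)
Your overall strategy is right and matches the paper's: cover by finitely many pieces, on each run a Lefschetz fixed-point argument, use that the property ``$f_{a,c',\theta}$ sends the compact contractible set into the open $\epsilon$--ball'' is open in $(a,\theta)$ to perturb, then compare projections. However, there is a genuine gap in how you handle arbitrary $r$. Your perturbation step relies on item~\eqref{item:kappa} of Definition~\ref{def:U(V)} (to find $c'$ with the slice in the interior of $X$), on near-flatness item~\eqref{item:flat}, and on the contractible sets from Lemma~\ref{lem:epsilon}.\ref{item:Bp} --- all of which apply only to triples in $D_1$ or $D_2$. When $p\in B_r(\idG)\cap S(a_0,\theta_0)$ has $\dgam(p,\idG)>R$, the triple $(a_0,b,c)$ with $p\in\pi(a_0,b,c)$ need not lie in $D_1$, and none of these tools is available. (Your ``WLOG'' step has the inclusion backwards: $D_1\supset\pi^{-1}(B_R(\idG))$ gives $(a_0,b,c)\in D_1$ only when $r\le R$, not when $r$ is enlarged.) Your ``compact exhaustion'' remark does not fix this: compactness alone gives finitely many pieces, but the constants simply do not apply on pieces lying outside $D_1$.

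The paper supplies the missing idea: $G$--equivariance. One covers $\sigma(K)\cap L_{a_0}\cap Y_{\theta_0}$ (where $K=\overline{\pi^{-1}(B_r(\idG))}$) by finitely many translates $\sigma(g_iD_0)$, then left-translates each piece by $g_i^{-1}$ back into $\sigma(D_0)$. Under this translation the pair $(a_0,\theta_0)$ becomes $(g_i^{-1}a_0,\,\rho(g_i)^{-1}\theta_0)$, and now all the machinery applies uniformly inside $D_0\subset D_1$; the perturbed Lefschetz argument produces a fixed point for $f_{g_i^{-1}a,\,c_i',\,\rho(g_i)^{-1}\theta}$, and translating back by $g_i$ gives a point of $L_a\cap Y_\theta$ in $\sigma(g_iD_1)$. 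The bound $\diam(\pi(D_1))$ then falls out immediately, since both the original point of $S(a_0,\theta_0)$ and the new point of $S(a,\theta)$ project into the same translate $g_i\,\pi(D_1)$ --- no separate slimness estimate is needed. This equivariance reduction is the one substantive step you should add.
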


The main idea behind the proof of Lemma~\ref{lem:cont} comes from the proof of Lemma~\ref{lem:fixed}, and the fact that the fixed point property used there is stable under small perturbations of the map $f_{a,c,\theta}$.

\begin{proof}[Proof of Lemma~\ref{lem:cont}]
  If $B_r(\idG)\cap S(a_0,\theta_0)$ is empty there is nothing to show, so we suppose
  $B_r(\idG)\cap S(a_0,\theta_0)$ is non-empty.  Let $K\subset X$ be the closure of $\pi^{-1}(B_r(\idG))$.
  
Recall from Notation \ref{notation:Y} that $f_{a,c,\theta}$ denotes the map $x \mapsto f_\theta(a, x, c)$, and recall that $D_0\subset D_1$ has the following properties
\begin{enumerate}
\item $X \subset GD_0$
\item   For any $(a,b,c)\in D_0$, the set $B_{\epsilon_1}(a) \times B_{\epsilon_1}(b) \times B_{\epsilon_1}(c)$ is contained in $D_1$.  (See Lemma~\ref{lem:D01}.\eqref{itm:D1}.)
\end{enumerate}
Recall also that $\epsilon_1 >  \epsilon_2 > 2 \epsilon$.

We cover $\sigma(K) \cap (L_{a_0} \cap Y_{\theta_0})$ with translates of $D_0$, as follows.  Since $K$ is compact, there are finitely many elements $g_1, g_2, \ldots g_k \in G$ so that 
\[ \sigma(K) \cap (L_{a_0} \cap Y_{\theta_0}) \subset \bigcup_{i=1}^k \sigma(g_i D_0). \]
By deleting elements from the list if necessary we may assume 
\[ \sigma(g_i D_0) \cap \left(L_{a_0} \cap Y_{\theta_0} \right) \neq \emptyset\] 
for each $i$. 

Our next goal is to show that, for each $i$, the projection of the larger translate $g_iD_1$ to the Cayley graph contains a point of $S(a, \theta)$, provided that $(a, \theta)$ is chosen close enough to $(a_0,\theta_0)$.  Here ``close enough'' depends on the set $K$ and hence on the constant $r$. 

Translating back to $D_0$, for each $i$ we have $\sigma(D_0) \cap \left( L_{g_i^{-1}a_0} \cap Y_{\rho(g_i)^{-1}\theta_0} \right) \neq \emptyset$.  
Let $b_i, c_i \in \bgam$ be such that 
\[ \left( ( g_i^{-1}a_0, b_i, c_i ), b_i \right) \in \sigma(D_0) \cap \left( L_{g_i^{-1} a_0} \cap Y_{\rho(g_i)^{-1}\theta_0} \right). \]
Because $ \left( g_i^{-1}a_0, b_i, c_i \right) \in D_0 \subset D_2$, Definition~\ref{def:U(V)} \eqref{item:flat} implies that 
\[ b_i = f_{\rho(g_i)^{-1}\theta_0}(g_i^{-1}a_0, b_i, c_i) \in B_{\epsilon}(\rho(g_i)^{-1}\theta_0). \]
hence, $\dvis(b_i,\rho(g_i)^{-1}\theta_0) < \epsilon$, and so $(g_i^{-1}a_0, \rho(g_i)^{-1}\theta_0, c_i ) \in D_1$.  

Let ${p_i} = \rho(g_i)^{-1}\theta_0$.  By Item~\ref{item:kappa} of Definition~\ref{def:U(V)}, there is some s $c'_i \in B_\epsilon(c_i)$ so that 
$ \{a_i\} \times B_{\epsilon_2}({p_i}) \times \{c'_i\}$ lies in the interior of  $X$.
Furthermore by Lemma~\ref{lem:epsilon}.\eqref{item:Bp} there is a 
closed contractible set $B_{p_i} \subset \bgam$ with $B_\epsilon({p_i}) \subset B_{p_i} \subset B_{\epsilon_2}({p_i})$, and 
 \[ f_{g_i^{-1} a_0, c'_i, {p_i}}(B_{\epsilon_2}({p_i})) \subset B_\epsilon({p_i}),\]
 so $f_{g_i^{-1} a_0, c'_i, {p_i}}$ has a fixed point in $B_\epsilon({p_i})$.  
 The property of taking the compact set $B_{p_i}$ into the open ball $B_\epsilon({p_i})$ is open (in the compact-open topology on continuous maps), so also holds for 
 any map sufficiently close to $f_{g_i^{-1} a_0, c'_i, {p_i}}$, provided the map is defined on $B_{p_i}$.   Recall that the domain of definition of $f_{x,y,z}$ is the set $\{ w \st (x, w, z) \in X \}$.  Thus, if a function $f_{x,y,z}$ is defined on a set $\{x\} \times B_{\epsilon_2}({p_i}) \times \{z\}$ contained in the {\em interior} of $X$, and 
  $B_{\epsilon_2}({p_i}) \supset B_{p_i}$, then for all sufficiently close $x', y', z'$ the function $f_{x',y', z'}$ will be defined on $B_{p_i}$ as well.  
 Additionally, the functions $f_{x,y,z}$ vary continuously in the arguments $(x,y,z)$.  Thus, we may take a neighborhood $N_i$ of $(a_0, \theta_0)$ such that for each $(a, \theta) \in N_i$,
 \begin{enumerate}
 \item the map $f_{g_i^{-1} a, c'_i, \rho(g_i)^{-1}\theta}$ is defined on $B_{p_i}$
  \item the map $f_{g_i^{-1} a, c'_i, \rho(g_i)^{-1}\theta}$ has a fixed point contained in $B_\epsilon({p_i})$; and
  \item $\dvis(g_i^{-1} a,g_i^{-1} a_0)< \epsilon_1$.
 \end{enumerate}

 Set $N = \bigcap_{i=1}^k N_i$.  Thus,  for any $(a, \theta) \in N$, each of the sets $L_{g_i^{-1}(a)} \cap Y_{\rho(g_i)^{-1}\theta}$ contains a point $\sigma (g_i^{-1} a, z_i, c'_i)$ where $z_i \in B_\epsilon({p_i})$.  In particular, we have 
 \begin{align*}
& \dvis(g_i^{-1} a, g_i^{-1} a_0)< \epsilon_1 \text{ and } \\
& \dvis(z_i,b_i)\le 2\epsilon <\epsilon_1.
\end{align*}  
Since $(g_i^{-1} a_0,b_i,c_i)\in D_0$, this means that $(g_i^{-1} a, z_i, c'_i)\in D_1$.  Multiplying on the left by $g_i$, we obtain $(a,g_iz_i,g_ic'_i) \in g_iD_1$ and $\sigma(a,g_iz_i,g_ic'_i)\in L_{a}\cap Y_{\theta}$, so the intersection
 \[ \sigma(g_i D_1) \cap (L_a\cap Y_\theta)  \]
 is non-empty for each of the elements $g_i$.  Projecting to the Cayley graph, we have $B_r(\idG)\cap S(a_0,\theta_0)$ contained in the $\diam(\pi(D_1))$--neighborhood of $S(a,\theta)$, which proves the lemma.   
\end{proof} 

\begin{proof}[Proof of Proposition~\ref{prop:cont_endpt} from Lemma~\ref{lem:cont}]
 Suppose $(a_0, \theta_0) \in \bgam \times \bgam$ is such that there exists $c$ with $(a_0, \theta_0, c) \in D_1$.  Then by Lemma~\ref{lem:fixed}, $L_{a_0} \cap Y_{\theta_0}$ contains a point of $\sigma(D_2)$, so is nonempty.
 Lemma~\ref{lem:cont} states that, given $r>0$, there is a neighborhood $N$ of $(a_0,\theta_0)$ so that if $(a , \theta) \in N$, then $B_r(\idG)\cap S(a_0,\theta_0)$ lies in the $\diam(\pi(D_1))$--neighborhood of $S(a,\theta)$.  By Proposition~\ref{prop:gromov_prod} both $S(a_0,\theta_0)$ and $S(a,\theta)$ are Hausdorff distance at most $3H+6\delta+1$ from some bi-infinite geodesic, so these geodesics will $2(3H + 6\delta + 1)+ \diam(\pi(D_1))$ fellow-travel each other over a compact set, which can be taken as large as we wish by taking $r$ large.  This gives continuity.   
 \end{proof}

 We now establish continuity of a similar positive endpoint map defined everywhere on $X$.   Since $f$ is a homeomorphism and the sets $X\times\{\theta\}$ partition $X\times\bgam$, so do their images $Y_\theta$.  Likewise, the sets $L_a$ give a partition of $\sigma(X)\subset X\times \bgam$, so 
  for each $x \in X$, there exists a unique $a(x)$ and $\theta(x)$ such that $\sigma(x) \in L_{a(x)} \cap Y_{\theta(x)}$.  Note if $x = (a,b,c)$ then $a(x) =a$.  
We have a sequence of maps: 
\begin{equation}\label{eq:curlyE}
  x = (a,b,c) \mapsto ((a, b, c), b) \mapsto (a, \theta(x)) \mapsto e^+(a, \theta(x)).
\end{equation}

\begin{proposition}[Continuity on $X$] \label{prop:cont_X}
Let $\mathcal{E}^+(x)= e^+(a(x), \theta(x))$ be the map given by the composition in~\eqref{eq:curlyE}.  Then $\mathcal{E}^+$ is continuous on all of $X$. 
\end{proposition}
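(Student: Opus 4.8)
The plan is to reduce continuity of $\mathcal{E}^+$ at an arbitrary point $x_0 \in X$ to the already-established Proposition~\ref{prop:cont_endpt}, by using equivariance to translate $x_0$ into the compact region $D_1$. First I would observe that, since $X = GD_0 \subset GD_1$ and the action of $G$ on $X$ is by homeomorphisms, there is some $g \in G$ with $g x_0 \in D_1$; writing $gx_0 = (a', b', c')$, the point $(a', b', c')$ has $a' = g a(x_0)$, and since $\sigma(gx_0) = (g, \rho_0(g))\cdot \sigma(x_0)$ lies in $Y_{\rho(g)\theta(x_0)}$, we have $\theta(gx_0) = \rho(g)\theta(x_0)$. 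Thus $(a', \theta(gx_0), c') = (g a(x_0), \rho(g)\theta(x_0), c') \in D_1$, so the hypothesis of Proposition~\ref{prop:cont_endpt} is satisfied at the pair $(g a(x_0), \rho(g)\theta(x_0))$.

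Next I would note that the three maps composed in~\eqref{eq:curlyE} are continuous near $x_0$ \emph{after} applying $g$: the map $x \mapsto \sigma(x)$ is continuous (it is a section), and because $f$ is a homeomorphism and the $Y_\theta$ and $L_a$ partition $\sigma(X)$ and $X \times \bgam$ respectively, the map $x \mapsto (a(x), \theta(x))$ is continuous wherever the relevant slices behave well — more precisely, $a(x)$ is literally the first coordinate so is continuous everywhere, and $\theta(x)$ is continuous because $f^{-1}$ is continuous and $\pi_{\bgam} \circ f^{-1}(\sigma(x)) = \theta(x)$. Hence $x \mapsto (a(x), \theta(x))$ is continuous on all of $X$. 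Composing with $e^+$, which by Proposition~\ref{prop:cont_endpt} is continuous at $(a(gx_0), \theta(gx_0)) = (ga(x_0), \rho(g)\theta(x_0))$, shows $x \mapsto e^+(a(x),\theta(x))$ is continuous at $gx_0$. Finally, equivariance~\eqref{eq:equivariance} gives $\mathcal{E}^+(x) = g^{-1}\cdot \mathcal{E}^+(gx)$ for $x$ near $x_0$, and since $\rho_0(g^{-1})$ is a homeomorphism of $\bgam$, continuity of $\mathcal{E}^+$ at $gx_0$ transfers to continuity at $x_0$. As $x_0$ was arbitrary, $\mathcal{E}^+$ is continuous on $X$.

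The only real subtlety — and the step I expect to require the most care — is justifying that $x \mapsto \theta(x)$ (equivalently that $x \mapsto (a(x),\theta(x))$) is genuinely continuous, since this is where the partition structure is used: one must be sure that $f$ being a homeomorphism of $X \times \bgam$ commuting with projection to $X$ means that $\theta(x)$ is exactly the $\bgam$--coordinate of $f^{-1}(\sigma(x))$, which depends continuously on $x$ because $\sigma$ and $f^{-1}$ are continuous. Everything else is a bookkeeping application of the $G$--equivariance already recorded in~\eqref{eq:equivariance} together with the fact that $X = GD_0$ established in Lemma~\ref{lem:D01}(1). I would present the argument in the order above: pick $g$ sending $x_0$ into $D_1$; identify the coordinates of $gx_0$ and check the $D_1$ hypothesis; quote continuity of the coordinate maps and of $e^+$ from Proposition~\ref{prop:cont_endpt}; then pull back by $g^{-1}$ using equivariance.
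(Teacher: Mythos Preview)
Your approach mirrors the paper's: use $G$--equivariance to reduce to the fundamental domain $D_0$, note that $\theta(x)$ is the $\bgam$--coordinate of $f^{-1}(\sigma(x))$ and hence continuous, and then apply Proposition~\ref{prop:cont_endpt}. The structure is right.

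One step is unjustified, however. Your ``Thus $(a', \theta(gx_0), c') \in D_1$'' does not follow from what precedes it: you have $(a',b',c') = gx_0 \in D_0$, but $\theta(gx_0)$ is not $b'$ in general, and the hypothesis of Proposition~\ref{prop:cont_endpt} demands a witness $c$ with $(a',\theta(gx_0),c) \in D_1$. The missing ingredient is near-flatness (Definition~\ref{def:U(V)}\eqref{item:flat}): since $\sigma(gx_0) = ((a',b',c'),b') \in Y_{\theta(gx_0)}$ and $(a',b',c') \in D_0 \subset D_2 \cap X$, flatness gives $b' \in B_\epsilon(\theta(gx_0))$, so $\dvis(b',\theta(gx_0)) < \epsilon < \epsilon_1$; then Lemma~\ref{lem:D01}\eqref{itm:D1} yields $(a',\theta(gx_0),c') \in D_1$. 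For this to work you must translate into $D_0$ (or at least $D_{\frac{1}{2}}$), not merely $D_1$, since item~\eqref{itm:D1} only guarantees that $\epsilon_1$--perturbations of points of $D_{\frac{1}{2}}$ land in $D_1$. The paper is similarly terse at this step, but the near-flatness argument is what actually makes the hypothesis of Proposition~\ref{prop:cont_endpt} available.
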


\begin{proof}
  Equivariance of each map in the composition implies that we have the equivariance property
  \[ \mathcal{E}^+(g x) = \mathcal{E}^+(g a(x),\rho(g)\theta(x)) .\]
It therefore suffices to check continuity of the composition above on the set $D_0 \cap X$ containing a fundamental domain for the action of $G$ on $X$.  

By definition, the section $\sigma$ is continuous.  For the second map in \eqref{eq:curlyE}, note that $\theta(x)$ is simply projection onto the second coordinate of $f^{-1}(\sigma(x)) \in X \times \bgam$.  Since $f^{-1}$ is a homeomorphism of $X \times \bgam$, its projection $\theta(x)$ is continuous.  Finally, Proposition~\ref{prop:cont_endpt} says that $(a, \theta(a,b,c))  \mapsto e^+(a, \theta)$ is continuous for $(a,b,c) \in D_1$.  
\end{proof} 
Going forward we will abuse notation and often think of $\mathcal{E}^+$ as a map defined on $\sigma(X)$, via the identification of $X$ with $\sigma(X)$.  

\subsection{Proof of Theorem~\ref{thm:main}}
Using the work above, we may now conclude the proof of our main theorem.  First recall the statement.
\maintheorem*

Our neighborhood $U = \mathcal U(V)$ was determined by our desired lower bound $C_V$ on Gromov products when we set our conventions in Section \ref{ss:neighborhood}.  In this section, we show that $e^+(a,\theta)$ is (locally) a function only of $\theta$, hence can be thought of a map from $\bgam$ to $\bgam$.  We will then show that this map has the properties of the desired semi-conjugacy between $\rho_0$ and $\rho$.

\begin{lemma} [$e^+$ is locally a function of $\theta$] \label{lem:only_theta}
Let $\theta \in \bgam$ and let $\{a_t\st t \in [0,1]\}$ be a path in $\partial G$ so that $e^+(a_t, \theta)$ is defined and continuous at all points.  
Then $e^+(a_t, \theta)$ is constant.   
\end{lemma}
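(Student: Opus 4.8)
The plan is an open--closed argument on $[0,1]$. Since $t\mapsto e^+(a_t,\theta)$ is assumed continuous and $[0,1]$ is connected, it is enough to prove this map is locally constant. Fix $t_0$, put $\xi_0=e^+(a_{t_0},\theta)$, and recall (from the discussion after Proposition~\ref{prop:gromov_prod}) that $e^-(a_{t_0},\theta)=a_{t_0}\ne\xi_0$, so a geodesic $\gamma_0$ shadowed by $S(a_{t_0},\theta)$ is a genuine bi-infinite geodesic; orient it with $\gamma_0^-=a_{t_0}$ and $\gamma_0^+=\xi_0$. We must show $e^+(a_t,\theta)=\xi_0$ for all $t$ near $t_0$.

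Let $\gamma_t$ be a geodesic shadowed by $S(a_t,\theta)$, with $\gamma_t^-=a_t$ and $\gamma_t^+=e^+(a_t,\theta)$. As $a_t\ne a_{t_0}$ in general, $\gamma_t$ and $\gamma_0$ may diverge at their negative ends, so one should not expect $\dHaus(\gamma_t,\gamma_0)$ to be bounded; the right goal is that their \emph{positive rays} stay close. Concretely I would prove: there exist a constant $C_0$ and a neighbourhood $I$ of $t_0$ so that for all $t\in I$ and all $m>0$ the point $\gamma_0(m)$ --- parametrising $\gamma_0$ so that $\gamma_0(+\infty)=\xi_0$ and $\gamma_0(0)$ is a nearest point to $\idG$ --- lies within $C_0$ of $S(a_t,\theta)$. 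Granting this, fix $t\in I$; for each $m$ choose $y_m\in\gamma_t$ with $\dgam(\gamma_0(m),y_m)$ bounded. Since $a_t$ is close to $a_{t_0}=\gamma_0^-$, the estimate near $\idG$ forces $\gamma_t$ also to track $\gamma_0$ near $\gamma_0(0)$; as $\gamma_t$ is a geodesic from $a_t$ to $\gamma_t^+$, it follows that for large $m$ the point $y_m$ lies far out along $\gamma_t$ toward $\gamma_t^+$. A routine Gromov-product estimate then gives $\gprod{\gamma_t^+}{\xi_0}{\idG}\ge m-(\text{const})$ with the constant independent of $m$; letting $m\to\infty$ gives $\gamma_t^+=\xi_0$, i.e. local constancy at $t_0$.

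The hard part is producing $C_0$, $I$, and the points near $\gamma_0(m)$ for arbitrarily large $m$. The tools are Lemma~\ref{lem:cont} and the equivariance $g\,S(a,\theta)=S(ga,\rho(g)\theta)$ of~\eqref{eq:equivariance}. Applied at $(a_{t_0},\theta)$, Lemma~\ref{lem:cont} makes $S(a_t,\theta)$ agree with $S(a_{t_0},\theta)$ up to $\diam(\pi(D_1))$ on $B_r(\idG)$ for $t$ near $t_0$; translating by a vertex $g$ within $3H+6\delta+1$ of $\gamma_0$ and applying Lemma~\ref{lem:cont} at $(g^{-1}a_{t_0},\rho(g)^{-1}\theta)$ instead transplants the estimate to $B_r(g)$, and taking $g$ far along $\gamma_0$ toward $\xi_0$ produces the desired points. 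The obstacle is uniformity: the neighbourhood of $t_0$ that Lemma~\ref{lem:cont} supplies shrinks both as the scale $r$ grows and as $|g|$ grows (the boundary action being only $\lambda^{|g|}$--bi-Lipschitz for the visual metric), so a priori no single $I$ serves all the vertices $g$ one needs. I expect this to be surmounted using the fact --- visible in the proof of Lemma~\ref{lem:cont} --- that the translating elements used there always lie in the fixed finite set $\{g\in G: gD_0\cap\overline{\pi^{-1}(B_r(\idG))}\ne\emptyset\}$, together with compactness of the path $\{a_t\}$ and a Lebesgue-number argument, which should upgrade the scale-by-scale estimates to a single $C_0$ and $I$. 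Establishing this uniformity is the technical heart of the proof; with it the lemma follows as above.
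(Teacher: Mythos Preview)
Your strategy is quite different from the paper's, and the gap you yourself flag is real: the uniformity in $m$ is the whole difficulty, and a Lebesgue-number argument does not resolve it.  The neighborhoods produced by Lemma~\ref{lem:cont} at the various scales $m$ are all neighborhoods of the \emph{same} point $t_0$, nested and (a priori) shrinking; a Lebesgue number applies to an open cover of a compact set, not to a descending chain of neighborhoods of a single point, and ``compactness of the path $\{a_t\}$'' does not convert one into the other.  What would actually make your scheme work is a Gromov-product estimate: if $g_m\in G$ is chosen near $\gamma_0(m)$ on the $\xi_0$--side, then for $t$ close to $t_0$ one has
\[
\gprod{a_t}{a_{t_0}}{g_m}\ \ge\ \gprod{a_t}{a_{t_0}}{\idG} + m - C
\]
for a constant $C$ depending only on $\dgam(\idG,\gamma_0)$ and $\delta$.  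Hence $\dvis(g_m^{-1}a_t,\,g_m^{-1}a_{t_0})$ \emph{decreases} with $m$, and combined with your correct observation that the auxiliary translates $g_i$ in the proof of Lemma~\ref{lem:cont} range over a fixed finite set (once $r$ is fixed), this yields a single neighborhood $I$ valid for all $m\ge 0$.  So your outline can be completed, but by a different mechanism than you propose, and the missing estimate is nontrivial.

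The paper avoids all of this with a soft renormalization argument.  It argues by contradiction: if $t\mapsto e^+(a_t,\theta)$ is nonconstant, choose a sequence $\{g_i\}\subset G$ witnessing $e^+(a_0,\theta)$ as a conical limit point, so $g_i e^+(a_0,\theta)\to p$ while $g_i z\to q$ for every $z\ne e^+(a_0,\theta)$.  By the intermediate value theorem one finds $t_i$ with $g_i e^+(a_{t_i},\theta)$ at visual distance exactly $\epsilon$ from $p$.  Equivariance gives $g_i S(a_{t_i},\theta) = S(g_i a_{t_i},\rho(g_i)\theta)$; these sets all meet a fixed compact $K\subset X$, so after passing to a subsequence (and using continuity of $\mathcal E^+$ from Proposition~\ref{prop:cont_X}) one obtains two points of $L_q\cap Y_{\theta_\infty}$ with distinct values of $\mathcal E^+$, contradicting Proposition~\ref{prop:gromov_prod}.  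The convergence-group dynamics does the work of ``pushing the discrepancy to infinity'' that your direct estimate attempts by hand.
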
 

\begin{figure}
  \labellist 
  \hair 2pt
\pinlabel \textcolor{red}{$g_ie^-(a_t,\theta)$} at 270 295
\pinlabel $g_iS(a_0,\theta)$ at 110 200
\pinlabel $p$ at 55 100
\pinlabel $q$ at 310 250
\pinlabel \textcolor{red}{$g_ie^+(a_t,\theta)$} at 260 90
\pinlabel $g_iS(a_{t_i},\theta)$ at 200 160
   \endlabellist
   \centerline{ \mbox{
 \includegraphics[width = 2.5in]{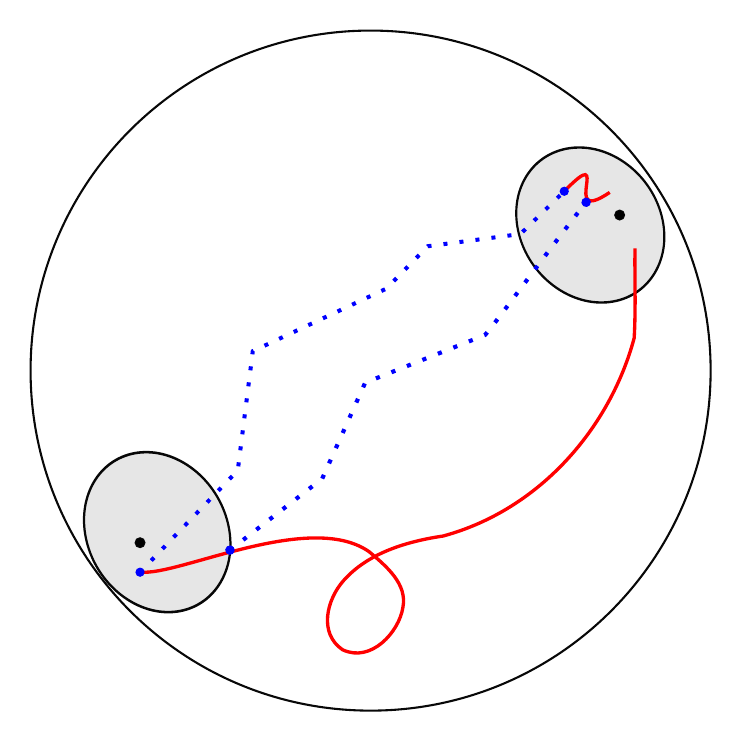}}}
 \caption{Paths of endpoints (in red) on $\bgam = \partial \Gamma$ and associated near-geodesic sets in $\Gamma$ (blue) with endpoints in the $\epsilon$-balls about $p$ and $q$.}
  \label{fig:onlytheta}
\end{figure}

\begin{proof} 
  We argue by contradiction.  Suppose we have such a path where $e^+$ is nonconstant.  Truncating and reparameterizing, we may suppose that
  \begin{enumerate}
  \item $e^+(a_t,\theta)$ is not locally constant at $t=0$,
  \item for all $t\in [0,1]$, $a_t \neq e^+(a_0,\theta)$, and
  \item $e^+(a_1,\theta)\ne e^+(a_0,\theta)$.
  \end{enumerate}
  The second item can be ensured by taking any sufficiently short path that is nonconstant at $0$, since $e^+(a, \theta) \neq a$ holds for all $a, \theta$.
  
  Since $G$ acts on $\partial G$ as a uniform convergence action (see Proposition~\ref{prop:convergence_group}), the point $e^+(a_0,\theta)$ is a conical limit point so there exists a sequence $\{g_i\} \subset G$ and points $p \neq q \in \partial G$ such that $g_i e^+(a_0,\theta) \to p$ and $g_i z \to q$ for all $z \in \partial G - \{e^+(a_0,\theta)\}$.    Modifying the sequence $\{g_i\}$ if needed by postcomposing with some fixed $g \in G$, we may also assume there exists $c$ with $(p,q,c) \in D_0$.  In particular this implies that $B_\epsilon(p)\cap B_\epsilon(q) = \emptyset$.
  Since $\bgam$ is compact we may assume by passing to a subsequence that $\rho(g_i)\theta$ converges to some point $\theta_\infty \in \partial G$.

For $i$ large enough, $g_i \cdot e^+(a_1, \theta) \in B_{\epsilon/2}(q)$ and $g_i \cdot e^+(a_0, \theta) \in B_{\epsilon/2}(p)$.  Thus, for each sufficiently large $i$, there exists $t_i$
such that $p_i:= g_i \cdot e^+(a_{t_i}, \theta)$ is visual distance exactly $\epsilon$ from $p$.  Since the arc $a_t$ does not meet $e^+(a_0,\theta)$ we have $g_ia_t\in B_{\epsilon}(q)$ for all $t$ and sufficiently large $i$.    See Figure \ref{fig:onlytheta} for a schematic illustration.  
Consider the sets $g_i S(a_{t_i},\theta) = S(g_i(a_{t_i}), \rho(g_i)(\theta))$.
Recall by Equation \eqref{eq:equivariance} we have
\[e^+(g_i a_t, \rho(g_i) \theta) = g_i \cdot e^+(a_t, \theta) . \]
This implies that the geodesics shadowed by the sets $S(g_i(a_{t_i}), \rho(g_i)(\theta))$ all pass through some compact subset of the Cayley graph $\Gamma$, and so the sets themselves all meet some compact $K\subset X$.

For each $i$, fix a point $y_i \in L_{g_i a_{t_i}} \cap Y_{\rho(g_i)(\theta)} \cap K$.
After passing to a further subsequence, the points $y_i$ converge to some $y_\infty \in K$.   Now $g_i a_{t_i} \to q$ and $\rho(g_i)(\theta) \to \theta_\infty$, so $y_\infty \in L_q \cap Y_{\theta_\infty} \cap K$.    
Using the notation from Proposition~\ref{prop:cont_X}, continuity of positive endpoints implies that $\mathcal{E}^+(y_\infty) = \lim_{n \to \infty} \mathcal{E}^+(y_i)$ which is by construction some point at distance $\epsilon$ from $p$.  

Now consider instead the constant sequence $t = 0$ instead of $t_i$.  By the same reasoning, for $i$ sufficiently large, $L_{g_i a_{0}} \cap Y_{\rho(g_i)(\theta)}$ will contain a point $z_i$ in $K$.  After passing to a subsequence, these converge to a point $z_\infty \in L_q \cap Y_{\theta_\infty} \cap K$.   By continuity of $\mathcal{E}^+$ we have 
$\mathcal{E}^+(z_\infty) = \lim_{n \to \infty} \mathcal{E}^+(z_i) = p$.   Thus, we have found two points, $y_\infty$ and $z_\infty$, both in $Y_{\theta_\infty} \cap L_q$ with different positive endpoints $\mathcal{E}^+(z_\infty) \ne \mathcal{E}^+(y_\infty)$.
This directly contradicts Proposition~\ref{prop:gromov_prod}, and this contradiction concludes the proof. 
\end{proof}

Recall that item~\eqref{itm:slices} from Lemma~\ref{lem:D01} says that, if $(a, \theta, c)$ and $(a', \theta, c')$ lie in $D_1$, then there exists a path $a_t$ with $a_0 = a$ and $a_1 = a'$ and a point $c''$ such that $(a_t, \theta, c'') \in D_1$ for all $t$.  Proposition~\ref{prop:cont_endpt} says that the map $e^+(a_t, \theta)$ is therefore continuous at each point, and thus by Lemma~\ref{lem:only_theta} we conclude it is constant.   In summary, we have the following.  
\begin{corollary} \label{cor:only_theta}
If $(a, \theta, c)$ and $(a', \theta, c')$ lie in $D_1$, then $e^+(a, \theta) = e^+(a', \theta)$.  
\end{corollary}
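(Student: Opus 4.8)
The plan is to combine the $\kappa$ path property of $D_1$ (item~\eqref{itm:slices} of Lemma~\ref{lem:D01}) with the continuity of the endpoint map (Proposition~\ref{prop:cont_endpt}) and the local constancy result (Lemma~\ref{lem:only_theta}). Given the hypotheses, $(a,\theta,c)$ and $(a',\theta,c')$ both lie in $D_1$, so the $\kappa$ path property (which holds for $D_1$ by Lemma~\ref{lem:D01}.\eqref{itm:slices}, since we chose $\kappa < \kappa_0$) produces a path $a_t$ from $a$ to $a'$, together with a single point $c'' \in \bgam$, so that $(a_t,\theta,c'') \in D_1 \minus N_\kappa(F) \subset D_1 \cap X$ for all $t$.

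First I would observe that for each $t$, since $(a_t,\theta,c'') \in D_1$, Lemma~\ref{lem:fixed} guarantees that $L_{a_t} \cap Y_\theta$ is nonempty, so $e^+(a_t,\theta)$ is defined. Next, Proposition~\ref{prop:cont_endpt} applies at each $(a_t,\theta)$: its hypothesis is exactly that there exists some $c$ with $(a_t,\theta,c) \in D_1$, and here $c''$ serves that role. Hence $t \mapsto e^+(a_t,\theta)$ is continuous at every point of $[0,1]$.

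Then I would invoke Lemma~\ref{lem:only_theta} directly: we have a path $a_t$ in $\bgam$ along which $e^+(a_t,\theta)$ is defined and continuous, so the lemma forces $e^+(a_t,\theta)$ to be constant in $t$. Evaluating at the endpoints $t=0$ and $t=1$ gives $e^+(a,\theta) = e^+(a',\theta)$, which is the claim.

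This argument is essentially a bookkeeping assembly of already-established facts, so there is no real obstacle; the only point requiring a moment's care is confirming that the path $a_t$ supplied by the $\kappa$ path property, together with the auxiliary point $c''$, simultaneously satisfies the hypothesis of Proposition~\ref{prop:cont_endpt} (existence of a witnessing third coordinate in $D_1$) for \emph{every} $t$ along the path — which it does, since $c''$ is chosen uniformly in $t$. Given that, the continuity hypothesis of Lemma~\ref{lem:only_theta} is met and the conclusion is immediate.
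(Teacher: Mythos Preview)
Your proposal is correct and follows essentially the same approach as the paper: invoke the $\kappa$ path property of $D_1$ (Lemma~\ref{lem:D01}\eqref{itm:slices}) to obtain the path $a_t$ and auxiliary point $c''$, apply Proposition~\ref{prop:cont_endpt} for continuity of $t\mapsto e^+(a_t,\theta)$, and then conclude constancy via Lemma~\ref{lem:only_theta}. If anything, you are slightly more explicit than the paper in checking that $e^+(a_t,\theta)$ is defined (via Lemma~\ref{lem:fixed}) and in noting that $(a_t,\theta,c'')\in D_1\minus N_\kappa(F)\subset D_1\cap X$.
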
 

\begin{definition}
Define $h:S^{n-1} \to S^{n-1}$ by $h(\theta) = e^+(a, \theta)$ where $a$ is any point such that there exists $c$ with $(a, \theta, c) \in D_0$.   
\end{definition} 

Note that $h$ is defined everywhere, and is continuous by the continuity of $e^+(a, \theta)$ given by Proposition~\ref{prop:cont_endpt}.   It remains to check that $h$ satisfies the other properties of the semi-conjugacy required to prove Theorem \ref{thm:main}.  The second point in Proposition~\ref{prop:gromov_prod} states that $\gprod{h(\theta)}{\theta}{\idG} \ge R-(4H + 11\delta)$.   Our choice of $R$ in Lemma \ref{lem:D01} \eqref{eq:R} implies that $\gprod{h(\theta)}{\theta}{\idG} \ge C_V$, satisfying equation \eqref{eq:smallsemiconjugacy} as desired, and showing that $h$ lies in our chosen neighborhood $V$.  This neighborhood contains only degree one maps, so $h$ is surjective.

We now check equivariance.  Let $g$ be an element of the generating set $\mathcal{S} \cup \mathcal{S}^{-1}$ used in the definition of $\Gamma$ and let $\theta$ be given.  
Choose $a$ so that $(a, \theta, c) \in D_0$.  We have
\[\rho_0(g) h(\theta) = g e^+(a, \theta) = e^+(g a, \rho(g) \theta) \] 

By definition of $D_{\frac{1}{2}}$, and our conditions on $\rho$, we have $(ga, g\theta, gc) \in D_{\frac{1}{2}}$.  By definition of $D_1$, and item \eqref{item:smallgens} of Definition~\ref{def:U(V)} which defines the neighborhood 
$\mathcal U(V)$, we then have $(ga, \rho(g)\theta, gc) \in D_1$.   Thus, by Corollary~\ref{cor:only_theta}, 
$e^+(ga, \rho(g) \theta, gc) = e^+(a', \rho(g)\theta, c')$ for any choice of $a'$ and $c'$ such that $(a', \rho(g)\theta, c') \in D_0$, thus giving 
\begin{equation}\label{generators}
\rho_0(g) h(\theta) = h(\rho(g) \theta)
\end{equation}
for all $\theta\in \bgam$.
Since \eqref{generators} holds for generators of $G$, it holds, inductively, for all elements $g \in G$.   This shows that $h$ is a semiconjugacy in the specified neighborhood of the identity map of $S^n$, completing the proof of the theorem. \qed

\end{document}